\numberwithin{equation}{section}
\newtheorem{theorem}{Theorem}[section]
\newtheorem{lemma}[theorem]{Lemma}
\newtheorem{corollary}[theorem]{Corollary}
\newtheorem{prop}[theorem]{Proposition}
\newtheorem{remark}[theorem]{Remark}
\newtheorem{assumption}[theorem]{Assumption}
\newcommand\pd{{\partial}}
\newcommand\tv{{\tilde{v}}}
\newcommand\w{{\tilde{w}}}
\newcommand\W{{\widetilde{W}}}
\newcommand{\R}{{\mathbb{R}}}
\newcommand{\N}{{\mathbb{N}}}
\newcommand\cB{{\mathcal B}}
\newcommand\cA{{\mathcal A}}
\newcommand\cD{{\mathcal D}}
\newcommand\cT{{\mathcal T}}
\newcommand\lam{{{\lambda, l}}}
\newcommand\wt{{\langle z\rangle}}
\begin{document}

\title[Prandtl equations in three space variables]
{A well-posedness Theory for the Prandtl equations in three space variables}

\author{C.-J. Liu}
\address{Cheng-Jie Liu
\newline\indent
Department of Mathematics, Shanghai Jiao Tong University
\newline\indent
Shanghai, 200240, P. R. China}
\email{cjliusjtu@163.com}

\author{Y.-G. Wang}
\address{Ya-Guang Wang
\newline\indent
Department of Mathematics, and MOE-LSC, Shanghai Jiao Tong University
\newline\indent
Shanghai, 200240, P. R. China}
\email{ygwang@sjtu.edu.cn}

\author{T. Yang}
\address{Tong Yang
\newline\indent
Department of mathematics, City University of Hong Kong,
\newline\indent
Hong Kong, P. R. China, and
\newline\indent
Department of Mathematics, Shanghai Jiao Tong University
\newline\indent
Shanghai, 200240, P. R. China}
\email{matyang@cityu.edu.hk}

\subjclass[2000]{35M13, 35Q35, 76D10, 76D03, 76N20}

\date{}

\keywords{Prandtl boundary layer equations,  three space variables, monotonic velocity fields.}


\begin{abstract}
The well-posedness of the three space dimensional
 Prandtl equations is studied under some constraint on its flow structure. It reveals
that the classical Burgers equation plays an important role in determining this
type of  flow with special structure, that avoids the appearance of the complicated secondary flow in the three-dimensional Prandtl boundary layers.
And the sufficiency of the monotonicity condition on the tangential velocity field for the existence
of solutions to the Prandtl boundary layer equations is illustrated in the three dimensional setting. Moreover, it is shown that this structured flow is linearly stable for any three-dimensional perturbation.
\end{abstract}

\maketitle

\tableofcontents


\section{Introduction}

To describe the behavior of viscous flows in a neighborhood of physical boundary qualitatively and quantitatively is a classical problem both in theoretical and applied fluid mechanics. It was observed by L. Prandtl in his seminal work \cite{prandtl} that, away from the boundary the flow is mainly driven by convection
so that the viscosity can be negligible, while
in a small neighborhood
of physical boundary the effect of the viscosity plays a significant role in the flow. Hence, there exists a thin transition layer near the boundary, in which the behavior of flow changes dramatically, this transition layer is so-called the boundary layer.

Mathematically, taking the incompressible Navier-Stokes equations
as the governed system for the viscous flow with velocity being non-slip on the boundary,
in Prandtl's theory, letting $\epsilon$ be the viscosity coefficient,
outside the layer of thickness $\sqrt{\epsilon}$ near the boundary, the flow is approximated by an inviscid one, and it is basically governed by the incompressible Euler equations; on the other hand, inside the layer, the convection and the viscosity balance so that the flow can be modelled by a system derived from the Navier-Stokes equations by asymptotic expansion, that is, the Prandtl boundary layer equations. The formal derivation of the Prandtl equations can be found in \cite{prandtl}, for example. 

In the Prandtl boundary layer equations, the tangential velocity profile
 satisfies a system
of nonlinear degenerate parabolic equations, and the incompressibility of flow still holds in the layer, so
 the tangential and normal velocities are coupled by the divergence-free constraint.  The main difficulties in studying the Prandtl equations lie in the degeneracy, mixed type, nonlinearity and non-local effect in the system, so that the classical mathematical theories of partial differential equations can hardly be applied. For this, in more than one hundred years since the Prandtl equations
were derived, there is still no general mathematical theory on the well-posedness nor a rigorous justification of the viscous limit of the Navier-Stokes equations to the superposition of the Prandtl and Euler equations except in the framework
of analytic functions by using the abstract Cauchy-Kowaleskaya theory (cf. \cite{CS, cannone, Samm} etc.)
or under the assumption that the vorticity of the Euler flow is supported away from the boundary (\cite{me}).
 However, the analytic property rules
out the physical singularity, so more physical function spaces for solutions need to be sought.

 On the other hand, under the monotonicity condition on the tangential velocity, local well-posedness was obtained in two space dimension in the classical work by Oleinik and her collaborators (\cite{OA1, Ole}), and then the global existence of a weak solution with extra
favorable condition on pressure by Xin and Zhang in \cite{XZ}.
 These existence results rely on the Crocco transformation which transfers
the degenerate and mix-typed system to a
scalar degenerate parabolic equation in
two dimensional case. Motivated by the fact that energy method can be well
applied to the Navier-Stokes equations,  a
new approach was introduced in \cite{AWXY}
to study the well-posedness theory  in
Sobolev spaces by using  a direct energy method without using the Crocco transformation. A similar result was
also obtained in \cite{Masmoudi-Wong}.

We would like to emphasize that
there is basically no  well-posedness theory for the three dimensional Prandtl equations
except the analytic case \cite{Samm},
mainly due to the extra difficulties coming from secondary flow appeared in the three dimensional boundary layers (\cite{moore}) and the complicated structure of boundary layers arising from the multi-dimensional velocity fields. Indeed, the well-posedness of the Prandtl equations in three space variables is one
of the important open questions proposed by Oleinik and Samokhin on page 500 in their classical monograph \cite{Ole}.

The main purpose of this paper is to study the well-posedness
in the function spaces of finite smoothness,
of the initial-boundary value problem for the three dimensional Prandtl equations in the domain $\{t>0, (x,y)\in D, z>0\}$ for a fixed $D\subset \R^2$, that is,
\begin{equation}\label{1.1}
\begin{cases}
\partial_t u+(u\partial_x+v\partial_y+w\partial_z) u+\partial_x p=\partial_z^2u,\\
\partial_t v+(u\partial_x+v\partial_y+w\partial_z) v+\partial_y p=\partial_z^2v,\\
\partial_x u+\partial_y v+\partial_z w=0,\\
(u,v,w)|_{z=0}=0, \quad \lim\limits_{z\to+\infty}(u,v)=(U(t,x,y), V(t,x,y)),
\end{cases}
\end{equation}
where $(U(t,x,y), V(t,x,y))$ and $p(t,x,y)$ are the tangential velocity fields and pressure on the boundary $\{z=0\}$ of the Euler flow, satisfying
\begin{equation}\label{euler}
\begin{cases}
  \partial_t U+U\partial_x U+V\partial_y U+\partial_x p=0,\\
  \partial_t V+U\partial_xV+V\partial_yV+\partial_y p=0.
  \end{cases}
\end{equation}

One of the key observations in this paper is that a special structure of the Euler
flow and the initial-boundary conditions can lead to the existence of a solution to the three dimensional
Prandtl equations with the same structure. Even though the existence of
this kind of three dimensional boundary layer relies on the structure condition,
it does give an existence theory for the three dimensional Prandtl system for
which almost no other mathematical theory is known so far. Moreover, the monotonicity of
the tangential velocity in the normal direction that is better understood in two dimensional space
can now be illustrated in the three dimensional problem. In addition, it is
interesting to find out that the classical Burgers equation plays an important
role in constructing this kind of flow with structure.

Precisely, without loss of generality, assume that the outer Euler flow takes the following form on the boundary $\{z=0\}$,
\begin{equation}\label{1.3}
\big(U(t,x,y), k(t,x,y)U(t,x,y), 0; ~p(t,x,y)\big),
\end{equation}
with $U(t,x,y)>0$.
We are trying to construct  a solution of the three dimensional Prandtl equations \eqref{1.1}  with the same structure
\begin{equation}\label{1.4}
\big(u(t, x,y,z), k(t,x,y)u(t,x,y,z), w(t,x,y,z)\big),\end{equation}
with $u(t,x,y,z)$ being strictly increasing in $z>0$. If this kind flow exists, then the special form \eqref{1.4} of the boundary layer profile shows that the direction of the tangential velocity field in the boundary layer is invariant in the normal variable $z$, consequently the secondary flow does not appear. Plugging the form \eqref{1.4} into the second equation in \eqref{1.1}, we get
$$
\partial_t (ku)+ (u\partial_x +ku\partial_y +w
\partial_z)(ku)+\partial_y p-k\partial_z^2 u=0,$$
which implies
\begin{equation}\label{eqk}
 u\left[\partial_t
k+u(\partial_x+k\partial_y)k\right]-k\pd_xp+\pd_yp=0,
\end{equation}
by using the first equation of \eqref{1.1}.

Noting that $k(t,x,y)$ is independent of $z$, by differentiating \eqref{eqk} with respect to $z$, it follows
\[
\partial_z u\partial_tk+2u\partial_zu
(\partial_x+k\partial_y)k=0,
\]
which implies
\begin{equation}\label{1.6}
  \pd_t k+2u(\partial_x +k\partial_y)k=0,
\end{equation}
where we have used the fact  that $\pd_z u>0$. Differentiating \eqref{1.6} with respect to $z$ gives the Burgers equation
\begin{equation}\label{eqkk}
 (\pd_x+k\pd_y)k=0.
\end{equation}

Combining \eqref{1.6} with \eqref{eqkk}, we get $\partial_tk=0$.
Plugging these equalities into \eqref{eqk}, it follows
\begin{equation}\label{eq_p}
\pd_yp-k\pd_xp=0, \end{equation}
which means that $(\pd_x p,\pd_y p)$ is parallel to both the velocity field of out Euler flow and the tangential velocity field in the boundary layer.

Based on the above calculation, from now on, we impose the following condition (H) on the outer flow and the function $k$:

\begin{enumerate}
\item[(H1)] in the domain $\{t>0, (x,y)\in D, z>0\}$ with
    a smooth bounded region $D\subset \R^2$, the function $k$ depends on $(x,y)$ only, and satisfies the Burgers equation \eqref{eqkk} in $D$. 
  Furthermore, the set $\overline{\gamma_-}\setminus\gamma_-$ contains finite number of points, where
    \begin{equation}\label{def_gam}
    \gamma_-~=~\{(x,y)\in\pd D\big|~(1,k(x,y))\cdot\vec{n}(x,y)<0\},
    \end{equation}
 with $\vec{n}(x,y)$ being the unit outward normal vector of $D$ at $(x,y)\in\pd D$, and $\overline{\gamma_-}$ is the closure of $\gamma_-$ on the boundary $\pd D$;

\item[(H2)] the Euler flow
$$\big(U(t,x,y), k(x,y)U(t,x,y), 0, p(t,x,y)\big)$$
with $U(t,x,y)>0$, satisfies
\begin{equation}\label{eqU}
\begin{cases}
\pd_tU+U\pd_xU+kU\pd_yU+\pd_x p=0,\\
\pd_y p-k\pd_x p=0.
\end{cases}
\end{equation}
\end{enumerate}

The main problem (MP) to be studied in this paper can be formulated as follows.

\begin{enumerate}
\item[(MP)] Under the above assumption (H), to study the well-posedness for the following problem of the Prandtl equations
in the domain $Q_T=\{0<t\leq T, (x,y)\in D,  z>0\}$:
\begin{equation}\label{pdtl-1}
\begin{cases}
\partial_t u+(u\partial_x +v\partial_y +w
\partial_z) u-\partial_z^2 u=-\pd_x p,\\[2mm]
\partial_t v+ (u\partial_x +v\partial_y +w
\partial_z)v-\partial_z^2 v=-\pd_y p,\\[2mm]
\partial_x u+\partial_y v+\partial_z w=0,\\[2mm]
u|_{z=0}=w|_{z=0}=0, \quad \lim\limits_{z\rightarrow +\infty}(u,v)=(U(t,x,y), k(x,y)U(t,x,y)),\\[2mm]
(u,v)|_{\pd Q_T^-}=(u_1(t,x,y,z),k(x,y)u_1(t,x,y,z)),\\[2mm]
(u, v)|_{t=0}=(u_0(x,y,z), k(x,y)u_0(x,y,z)),
\end{cases}
\end{equation}
where $\pd Q_T^-=(0,T]\times\gamma_-\times\mathbb{R}_+$
with $\gamma_-$ being given in \eqref{def_gam}.
\end{enumerate}

\vspace{.1in}
The main results on the well-posedness of the initial boundary value problem \eqref{pdtl-1} in given in the following theorem.

\begin{theorem}[Main Result] Under the above conditions $(H1)$-$(H2)$ with $k\in C^{10}(D)$ and $(U,p)\in C^{10}\big((0,T]\times D\big)$ for a fixed $T>0$, assume that
$$u_0\in C^{15}(D\times\R_z^+),\quad~u_1\in C^{15}(\pd Q_T^-),$$
have the following properties:
\begin{enumerate}
\item[(1)]  $\pd_z u_0>0,\pd_z u_1>0$ for all $z\geq0$, and there is constant $C_0>0$ such that
$$C_0^{-1}\Big(U(0,x,y)-u_1(x,y,z)\Big)~\leq~\pd_z u_0(x,y,z)~\leq~ C_0\Big(U(0,x,y)-u_0(x,y,z)\Big),
$$
and
$$\begin{array}{ll}
C_0^{-1}\Big(U(t,x,y)-u_1(t,x,y,z)\Big)~& \leq~\pd_z u_1(t,x,y,z)\\[2mm]
&\leq~ C_0\Big(U(t,x,y)-u_1(t,x,y,z)\Big)
\quad {\rm on}\quad \pd Q_T^-;
\end{array}
$$
\item[(2)] the compatibility conditions hold up to order $6$ at $\{t=0\}\cap \pd Q_T^-$, and the compatibility conditions hold up to order $3$ ($4$ resp.) at $\{t=0\}\cap \{z=0\}$ ($\{t=0\}\cap \{z=\infty\}$ resp.), and $\pd Q_T^-\cap\{z=0\}$ ($\pd Q_T^-\cap\{z=\infty\}$ resp.).
\end{enumerate}
Then,
there exist $0<T_0\le T$ and a unique classical solution $(u,v,w)$ to the problem \eqref{pdtl-1} in the
domain $Q_{T_0}$, moreover, the solution is linearly stable with respect to any three-dimensional smooth perturbation of the initial data and boundary data
without the special structure given in \eqref{pdtl-1}.
\end{theorem}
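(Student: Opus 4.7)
The plan is to exploit the ansatz \eqref{1.4} and the algebraic consequences (H1)--(H2) derived in the introduction to reduce \eqref{pdtl-1} to a \emph{scalar} Prandtl-type problem for $u$, solve that problem by the monotonicity/energy method developed for the two-dimensional case, and then handle linear stability separately by linearizing the full three-dimensional system around the structured solution. The four main steps are as follows.

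\textbf{Step 1: Reduction to a scalar equation.} Set $v=k(x,y)u$ and
$w=-\int_0^z[\partial_x u+\partial_y(ku)]\,dz'$. Using $(\partial_x+k\partial_y)k=0$ from (H1) and $\partial_y p=k\partial_x p$ from (H2), one checks directly that once $u$ solves
\begin{equation*}
\partial_t u+u(\partial_x+k\partial_y)u+w\partial_z u-\partial_z^2u=-\partial_x p,
\end{equation*}
with the inherited boundary/initial data, the second momentum equation in \eqref{pdtl-1} is automatically obtained by multiplying the first by $k(x,y)$. So the whole system is equivalent to a scalar degenerate parabolic equation for $u$ with nonlocal coefficient $w$, together with an algebraic/transport recovery of $v,w$. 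Note that $\partial_x+k\partial_y$ is precisely the characteristic field of the Burgers equation satisfied by $k$, and $\gamma_-$ is its inflow boundary.

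\textbf{Step 2: Characteristic coordinates.} Away from the finitely many exceptional points of $\overline{\gamma_-}\setminus\gamma_-$, introduce smooth coordinates $(\xi,\eta)$ on $D$ such that $\partial_\xi=\partial_x+k\partial_y$ on functions of $(x,y)$, with $\xi$ increasing from $\gamma_-$ into $D$. In these coordinates the reduced equation becomes a two-dimensional Prandtl equation in $(t,\xi,z)$, parametrised by the transverse variable $\eta$, with a lower-order contribution $u\,\partial_y k$ inside the integrand defining $w$. A partition of unity allows one to patch this construction globally; the exceptional points are handled with cut-offs in appropriately weighted norms.

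\textbf{Step 3: Well-posedness of the reduced problem.} This is the heart of the proof. I would apply the direct energy method of Alexandre--Wang--Xu--Yang and Masmoudi--Wong: use $\partial_z u>0$ (propagated by a maximum-principle argument on $\partial_z u$, using the two-sided bound in hypothesis~(1) to prevent degeneration of the monotonicity) to introduce a good unknown of the form
\begin{equation*}
g=\partial_z u-\frac{\partial_z^2 u}{\partial_z u}(u-U),
\end{equation*}
which algebraically cancels the derivative-loss term in any tangential energy estimate. Closing the estimate in a weighted Sobolev space for $u$ and $g$ simultaneously, and treating the inflow data on $\gamma_-=\{\xi=\xi_{\min}\}$ as standard data for a parabolic equation whose transport is along $\partial_\xi$, one obtains local-in-time existence of a classical solution by a Galerkin/iteration scheme. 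Uniqueness follows from a linearized energy estimate in the same framework. The compatibility conditions in~(2) supply the regularity at corners.

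\textbf{Step 4: Linear stability.} Linearize \eqref{pdtl-1} at $(u,ku,w)$ for a general three-dimensional perturbation $(\delta u,\delta v,\delta w)$ \emph{without} the structural ansatz, so $\delta v$ is independent of $\delta u$. Because the base flow has $\partial_z u>0$, two independent good unknowns $g_1,g_2$ of the same type as in Step~3 can be constructed for the $\delta u$ and $\delta v$ equations respectively, and a closed weighted Sobolev energy estimate on a short time interval yields linear stability.

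The principal obstacle is Step~3: controlling the nonlocal term in $w$ together with the transverse dependence on $\eta$ and the limited smoothness of the characteristic coordinates near $\overline{\gamma_-}\setminus\gamma_-$. Both difficulties are addressed by combining the good-unknown cancellation with a partition of unity that isolates the exceptional points, and by carefully choosing weighted norms in $z$ so that the estimates on $w$ remain uniform up to the lateral boundary.
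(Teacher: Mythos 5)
Your plan diverges from the paper at the two places where the real work is done, and in both places there is a genuine gap. For existence (your Steps 2--3), the paper does not straighten characteristics or run a tangential energy scheme at all: it applies the Crocco transformation $\zeta=u/U$, $W=\partial_z u/U$ to the reduced problem \eqref{eqk2}, obtaining the degenerate problem \eqref{eq_tr} on $D\times(0,1)$, and solves it by an Oleinik-type iteration with comparison functions and maximum principles, the approximate solutions being built by elliptic regularization with Neumann data (Section 4). Your substitute -- the Alexandre--Wang--Xu--Yang / Masmoudi--Wong energy method in characteristic coordinates -- faces an obstacle you do not address: that method is designed for tangential variables without lateral boundary, whereas here $D$ is bounded, data are prescribed only on the inflow part $\gamma_-$, and nothing is prescribed on $\gamma_+$. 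Tangential-derivative estimates require integrations by parts transversal to $\partial D$, producing boundary terms on $\gamma_+$ and normal-derivative traces on $\gamma_-$ (which are only determined implicitly through the equation, as in \eqref{def_w1}) that the good-unknown cancellation does not control; you give no mechanism for closing them. Moreover the characteristic coordinates degenerate precisely at $\overline{\gamma_-}\setminus\gamma_-$, where the Burgers characteristics are tangent to $\partial D$, and ``cut-offs in weighted norms'' is not an argument. Note also that hypothesis (1), the two-sided comparability of $\partial_z u_0,\partial_z u_1$ with $U-u_0$, $U-u_1$, is exactly what makes the Crocco unknown satisfy $M^{-1}(1-\zeta)\le W\le M(1-\zeta)$ as in \eqref{seq_ass}; in your scheme it is never genuinely used, a sign that the proof is not aligned with the hypotheses.

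For linear stability (your Step 4), the proposal of ``two independent good unknowns'' for $\delta u$ and $\delta v$ does not meet the genuinely three-dimensional difficulty: since $\delta w=-\int_0^z(\partial_x\delta u+\partial_y\delta v)\,d\tilde z$, the loss-of-derivative term $\delta w\,\partial_z u^s$ in the $\delta u$-equation brings in $\partial_y\delta v$, and symmetrically the $\delta v$-equation brings in $\partial_x\delta u$; two separate good unknowns of AWXY type do not close this mutual coupling for a general monotone background (this is exactly the secondary-flow obstruction the paper emphasizes). The paper's proof hinges on the structural cancellation: because $v^s=ku^s$ and $(\partial_x+k\partial_y)k=0$, the combination $\tilde v=k\,\delta u-\delta v$ of \eqref{def_tv} satisfies the decoupled problem \eqref{eq_tv} with no derivative loss, so it is estimated first; substituting $\delta v=k\,\delta u-\tilde v$ turns the divergence constraint into $\partial_x\delta u+\partial_y(k\,\delta u)+\partial_z\delta w=\partial_y\tilde v$, and only then does the single 2D-type good unknown $h=\partial_z(\delta u/\partial_z u^s)$ of \eqref{def_th}, governed by \eqref{eqh}, close the estimate. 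Without identifying this decoupling your stability step would fail. A smaller omission: the claimed uniqueness for \eqref{pdtl-1} requires showing that every classical solution has $v=ku$, which the paper obtains from the energy argument for the problem \eqref{pdtl-2}; your ``equivalence'' in Step 1 only covers the converse direction.
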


\begin{remark}
One important observation on the problem \eqref{pdtl-1} is that for classical solutions, under the
assumption (H), the solution to the problem \eqref{pdtl-1} satisfies
$v(t,x,y,z)=k(x,y)u(t,x,y,z)$, i.e. the boundary layer flow has the special structure as given in \eqref{1.4}. Indeed,  assuming that $(u,v,w)$ is a  classical solution to \eqref{pdtl-1}, then $W(t,x,y,z)=v(t,x,y,z)-k(x,y)u(t,x,y,z)$ satisfies the following problem:
\begin{equation}\label{pdtl-2}\begin{cases}
\partial_t W+(u\partial_x +ku\partial_y +w
\partial_z) W-\partial_z^2 W+(\pd_y v-k\pd_y u) W=0,\\
W|_{z=0}=0,\quad \lim\limits_{z\rightarrow+\infty}W=0,\\
W|_{\pd Q_T^-}=0,\quad W|_{t=0}=0,
\end{cases}\end{equation}
which has only trivial solution $W\equiv 0$ by using the energy argument. Therefore, to study the problem \eqref{pdtl-1} is equivalent to study the following reduced problem for only
two unknown functions $u$ and $w$ in $Q_T$,
\begin{equation}\label{eqk2}\begin{cases}
\partial_t u+(u\partial_x +ku\partial_y +w
\partial_z) u-\partial_z^2 u=-\pd_x p,\\[2mm]
\partial_x u+\partial_y (ku)+\partial_z w=0,\\[2mm]
u|_{z=0}=w|_{z=0}=0, \quad \lim\limits_{z\rightarrow +\infty}u=U(t,x,y),\\[2mm]
u|_{\pd Q_T^-}=u_1(t,x,y,z),\quad u|_{t=0}=u_0(x,y,z).
\end{cases}\end{equation}
\end{remark}

In the rest of  this paper, we will focus on
 the well-posedness of the problem \eqref{eqk2} under the assumption (H) and the monotonic condition $\pd_zu_0>0,\pd_zu_1>0$
 for all $z>0$. Precisely,
in Section 2, motivated by the work of Oleinik and her collaborators,
we prove the local existence of a classical solution to the problem \eqref{eqk2} under certain smoothness and compatibility conditions of the initial and boundary data by using the Crocco transformation.
Moreover, in Section 3, by adopting the approach
given in \cite{AWXY},, we deduce that the structured classical solution constructed in Section 2 is linearly stable with respect to any three dimensional perturbation for the Prandtl boundary layer equations.
Finally, in Section 4, we present the main arguments of the construction of approximate solutions to the problem derived from the reduced problem  \eqref{eqk2} after taking the Crocco transformation.

Note that under the additional favorable assumption on the pressure of the outer flow, that is $\partial_xp(t, x,y)\le 0$ for $t>0$ and $(x,y)\in D$,  as in  \cite{XZ} for the two dimensional Prandtl equations, global existence of  weak solution for the problem  \eqref{eqk2} by using Crocco transformation can be obtained and this will be presented
in our coming paper. In this paper, we will focus on the existence of classical solution together with
its stability.

Before the end of the introduction, in addition to the well-posedness results mentioned above, let us review some other works on the
Prandtl equations. Without the monotonicity assumption, it is well expected that
singularities will develop in the Prandtl equations.
Van Dommelen and Shen in \cite{van} illustrated the ``Van  Dommelen singularity'' by
considering an implusively started circular cylinder to show
the blowup of the normal velocity, and
 E and Enquist in \cite{e-2} precisely constructed some finite time blowup
solutions to the two-dimensional Prandtl equations.
There are also some interesting works on the
instability of the two-dimensional Prandtl equations, in particular
 in the Sobolev spaces. Corresponding to the well known
Rayleigh criterion for the Euler flow,  Grenier \cite{grenier} showed that
 the unstable Euler shear flow  yields instability of the Prandtl equations.
It was shown in \cite{GV-D}
 that a non-degenerate
critical point in  the shear flow of the Prandtl equations leads to
  a strong linear ill-posedness of the Prandtl equations in the
Sobolev space framework. Moreover, \cite{GV-N} strengthens
the result of \cite{GV-D}  for an unstable
shear flow.
Furthermore, the ill-posedness in the nonlinear
setting was proved in \cite{guo} to show that
the  Prandtl equations are ill-posed near non-stationary and non-monotonic shear flows so that
the asymptotic boundary-layer
expansion is not valid for non-monotonic shear layer flows in Sobolev spaces.


\section{Local existence of classical solutions}


\subsection{Crocco transformation, assumptions and iteration scheme}

For a fixed bounded domain $D$ of $\R^2$ with a smooth boundary $\pd D$, denote by $Q$ the domain $\{(t,x,y,z)|~0< t<T,(x,y)\in D,z\in\R_+\}$.
Consider the following problem derived from the three dimensional Prandtl problem in the domain $Q_T$,
\begin{equation}\label{eqk3}
\begin{cases}
\partial_t u+(u\partial_x +ku\partial_y +w
\partial_z) u-\partial_z^2 u=-\pd_x p,\\[2mm]
\partial_x u+\partial_y (ku)+\partial_z w=0,\\[2mm]
u|_{z=0}=w|_{z=0}=0, \quad \lim\limits_{z\rightarrow +\infty}u=U(t,x,y),\\[2mm]
u|_{\pd Q_T^-}=u_1(t,x,y,z),\quad
u|_{t=0}=u_0(x,y,z),
\end{cases}
\end{equation}
with the same notations as given in \eqref{eqk2}.

Assuming that $U(t,x,y)>0$ for all $t>0$ and $(x,y)\in D$,
we are going to construct a solution to the problem \eqref{eqk3} with the $x-$direction tangential velocity $u(t,x,y,z)$ being strictly monotone in $z>0$,
under the assumption:
\begin{equation}\label{ass_ib}
\pd_z u_0>0,~\pd_z u_1>0,~\quad {\rm for}~z\geq0.
\end{equation}

\vspace{0.5cm}

\underline{Crocco Transformation:} Inspired by the method introduced in \cite{OA1},
apply the following Crocco transformation to the problem \eqref{eqk3},
\begin{equation}\label{crocco}
\xi=x,~\eta=y,~\zeta=\frac{u(t,x,y,z)}{ U(t,x,y)},
\end{equation}
and let $W(t,\xi,\eta,\zeta)=\frac{\pd_z u(t,x,y,z)}{U(t,x,y)}.$
Obviously, when the unknown function $u$ is strictly increasing in $z$, the transformation \eqref{crocco} is invertible, and under  this transformation, the original domain $Q_T=\{(t,x,y,z)|~0< t\leq T,(x,y)\in D,z\in\R_+\}$ is transformed into
$$\Omega=\{(t,\xi,\eta,\zeta)|~0< t\leq T,~(\xi,\eta)\in D,~0<\zeta<1\}.
$$

Therefore,  to solve the problem \eqref{eqk3} is reduced to find a solution $W(t,\xi,\eta,\zeta)$ to the following initial boundary value problem in $\Omega$,
\begin{equation}\label{eq_tr}
\begin{cases}
L(W)~\triangleq~\partial_t W+\zeta U(\partial_\xi +k\partial_\eta)W+A\pd_\zeta W+BW-W^2\pd_\zeta^2 W=0,\\
W|_{\zeta=1}=0,\quad W\pd_\zeta W|_{\zeta=0}=\frac{ p_x}{U},\\
W|_{\Gamma_-}=W_1(t,\xi,\eta,\zeta)\triangleq \frac{\pd_z u_1}{U},\\
W|_{t=0}=W_0(\xi,\eta,\zeta)\triangleq \frac{\pd_z u_0}{U},
\end{cases}
\end{equation}
where
$$A=-\zeta(1-\zeta){ U_t\over U}-(1-\zeta^2){ p_x\over U},\quad
~B={ U_t\over U}+\zeta(U_x+kU_y)-\pd_y k\cdot \zeta U,$$
and
$$\Gamma_-=\{(t,\xi,\eta,\zeta):~0< t\leq T,~(\xi,\eta)\in\gamma_-,~0<\zeta<1\}.$$
\\

\underline{Notations and Assumptions:}
First, we introduce some notations defined on $\pd D$: denote by $\vec{\tau}(\xi,\eta)$ and $\vec{n}(\xi,\eta)$ the unit tangential and outward normal vectors on $\pd D$ at $(\xi,\eta)\in\pd D$, and
\[\pd_\tau=\vec{\tau}\cdot(\pd_\xi,\pd_\eta),~\pd_n=\vec{n}\cdot(\pd_\xi,\pd_\eta).\]

Obviously, the operator $\pd_\xi+k\pd_\eta$ restricted on $\pd D$ can be rewritten as:
\begin{equation}\label{def_k}
\pd_\xi+k\pd_\eta~=~k_\tau\pd_\tau+k_n\pd_n,
\end{equation}
with
\[k_\tau(\xi,\eta)=\left(1,k(\xi,\eta)\right)\cdot\vec{\tau}(\xi,\eta),
\quad~k_n(\xi,\eta)=\left(1,k(\xi,\eta)\right)\cdot\vec{n}(\xi,\eta).\]

To state the compatibility conditions of the initial and boundary data of the problem \eqref{eqk3}, denote
by
\begin{equation}\label{w-0-1}
W_0^i(\xi,\eta,\zeta)~=~\pd_t^i W|_{t=0},\quad~W_1^i(t,\xi,\eta,\zeta)~=~\pd_n^i W|_{\Gamma_-}\end{equation}
for $0\leq i\leq4$.

Obviously, by using the equation given in \eqref{eq_tr}, we can easily represent $W_0^{i+1}$ and $W_1^{i+1}$ by using $W_0$ and $W_1$ inductively for $0\leq i\leq3$, that is,
\begin{equation}\label{def_w0}
\begin{split}
W_0^{i+1}(\xi,\eta,\zeta)~=~&-\sum_{j=0}^{i}C_{i}^j\cdot\Big\{\zeta
\pd_t^{i-j}U\cdot(\pd_\xi+k\pd_\eta)W_0^j+\pd_t^{i-j}A\cdot\pd_\zeta W_0^j\\
&\quad+\pd_t^{i-j}B\cdot W_0^j-\pd_\zeta^2W_0^{i-j}\cdot\Big[\sum_{l=0}^jC_j^l~W_0^l\cdot W_0^{j-l}\Big]\Big\}
\end{split}
\end{equation}
at $t=0$,
 and
\begin{equation}\label{def_w1}
(\zeta U k_n)\cdot W_1^{i+1}=f_{i+1}
\end{equation}
on the boundary $\Gamma_-$, where the function $f_{i+1}$, defined on the boundary $\Gamma_-$, is given by
\begin{equation}\label{def_w1-1}
\begin{split}
f_{i+1}=& -\pd_t W_1^{i}-\sum_{j=0}^{i-1}C_{i}^{j}\Big[\zeta \pd_n^{i-j}(Uk_n)\cdot W_1^{j+1}\Big]-\sum_{j=0}^{i}C_{i}^j\Big\{\zeta\pd_n^{i-j}(Uk_\tau)\cdot\pd_\tau W_1^j\\
&\quad+\pd_n^{i-j}A
\cdot\pd_\zeta W_1^j+\pd_n^{i-j}B\cdot W_1^j-\pd_\zeta^2 W_1^{i-j}\cdot\Big[\sum_{l=0}^jC_j^lW_1^l\cdot W_1^{j-l}
\Big]\Big\},
\end{split}\end{equation}
with $C_i^j={i!\over j!(i-j)!}$ for integer $0\leq j\leq i$.

Now, we give the following assumptions on the initial and boundary data of \eqref{eq_tr}.

\begin{assumption}\label{ass_1}
Assume that for the problem \eqref{eq_tr},
\begin{equation}\label{reg_k}
k\in C^{10}(D),\quad (U,p)\in C^{10}\big((0,T]\times D\big),\
\end{equation}
and the initial boundary data
\begin{equation}\label{reg_ini}
W_0\in C^{14}\big(D\times(0,1)\big),\quad~W_1\in C^{14}(\Gamma_-),
\end{equation}
such that we have the following properties:

(1) there is a constant $M>0$, such that
\begin{equation}\label{seq_ass}
M^{-1}(1-\zeta)\leq W_0(\xi,\eta,\zeta),W_1(t,\xi,\eta,\zeta)\leq M(1-\zeta),
 \end{equation}

(2) functions $W_1^i\in C^6(\Gamma_-)~(1\leq i\leq 4)$,  and the following compatibility conditions hold:
\[\begin{split}
\qquad\qquad{\bf{(i)}}&~\pd_\zeta^m\pd_\tau^l\pd_n^jW_0^i|_{\Gamma_-}~=~\pd_\zeta^m\pd_\tau^l\pd_t^iW_1^j|_{t=0},\quad ~~{\rm for}~i,j\leq4,~m+l+j+i\leq5,\\
\qquad\qquad{\bf{(ii)}}&~\pd_\xi^j\pd_\eta^l W_0^i|_{\zeta=1}=\pd_t^q\pd_\tau^r W_1^s|_{\zeta=1}=0,\quad  {\rm for}~i+j+l\le 3, q+r+s\leq3,\\
\qquad\qquad{\bf{(iii)}}&~\pd_{\xi,\eta}^\alpha\pd_\zeta\left(\sum_{j=0}^iC_i^jW_0^j\cdot W_0^{i-j}\right)|_{\zeta=0}~=~2\pd_{\xi,\eta}^\alpha\pd_t^i({p_x\over U})|_{t=0},\quad {\rm for} ~|\alpha|+i\leq2,\\
&~\pd_t^i\pd_\tau^j\pd_\zeta\left(\sum_{l=0}^m C_m^lW_1^l\cdot W_1^{m-l}\right)|_{\zeta=0}~=~2\pd_t^i\pd_\tau^j\pd_n^m({p_x\over U})|_{\Gamma_-},~ {\rm for} ~i+j+m\leq2,\\
&{\rm where}~\alpha=(\alpha_1,\alpha_2),~|\alpha|=\alpha_1+\alpha_2.
\end{split}\]
\end{assumption}

\begin{remark}
(1) The above regularity assumption on $k,U,p,W_0$ and $W_1$ given in \eqref{reg_k} and \eqref{reg_ini} respectively, and the compatibility condition 2(i) are for the requirement that the zero-th order approximate solution $W^*$ constructed in \eqref{w*} and \eqref{w*1} needs to be $C^6$ in a neighborhood of the boundary, which implies $F\in W^{4,\infty}$ for the function $F$ given in \eqref{pr_n}, to have the boundedness of approximate solutions $W^n_\epsilon$ determined by \eqref{pr_n} in $W^{4,\infty}$ uniformly in $\epsilon$. The compatibility condition 2(ii)-(iii) is to guarantee the approximate solutions constructed by \eqref{eq_seq} satisfying $W^n\in C^3(\overline{\Omega})$ for all $n\ge 0$. 

(2)
From \eqref{def_w1} and \eqref{def_w1-1}, we know that by
the assumptions given  above on the boundary data $W_1$,
the functions $f_i,~1\leq i\leq4$ defined on $\Gamma_-$ satisfy
\begin{equation}
f_i~=~\mathcal{O}(\zeta k_n),\quad as~\zeta k_n~\rightarrow~0.
\end{equation}

(3) It is easy to see that Assumption \ref{ass_1} can be derived from the corresponding conditions for the
original problem \eqref{eqk3} of the Prandtl equations, which will be given in \S2.5 later.
\end{remark}

\vspace{0.5cm}

\underline{Iteration scheme for solving the problem \eqref{eq_tr}.}

Let $W^0(t, \xi, \eta, \zeta)$ be the zero-th order approximate solution of the problem \eqref{eq_tr}, which will be constructed in 
Section 4,
such that $W^0$ has bounded derivatives up to
 order four in $\overline{\Omega}$, and satisfies
\begin{equation}\label{w0}
\begin{cases}
\pd_t^i W^0|_{t=0}=W_0^i(\xi,\eta,\zeta),~\pd_n^j W^0|_{\Gamma_-}=W_1^j(t,\xi,\eta,\zeta),\quad for~0\leq i,j\leq  3,\\[2mm]
M^{-1}(1-\zeta)\leq W^0(t,\xi,\eta,\zeta)\leq M(1-\zeta),\quad \forall (t,\xi,\eta,\zeta)\in\Omega,
\end{cases}
\end{equation}
for the positive constant $M$ given in \eqref{seq_ass}.

Then, we construct the $n-$th order approximate solution of  \eqref{eq_tr}  by solving the following linearized
 problem in $\Omega$,
\begin{equation}\label{eq_seq}
\begin{cases}
L_n(W^n)~\triangleq~\partial_t W^n+\zeta U(\partial_\xi +k\partial_\eta)W^n+A\pd_\zeta W^n+BW^n-(W^{n-1})^2\pd_\zeta^2 W^n=0,\\
 W^{n-1}\pd_\zeta W^n|_{\zeta=0}={ p_x\over U},
~W^n|_{\Gamma_-}= W_1(t,\xi,\eta,\zeta),\\
W^n|_{t=0}= W_0(\xi,\eta,\zeta).
\end{cases}\end{equation}
Note that  we do not need to impose any condition of $W^n$ on the boundary $\{\zeta=1\}$,
as we shall verify in Proposition \ref{lem_n3} that the approxiamte solution $W^n$ vanishes on $\{\zeta=1\}$  for all $n\geq1$ by induction on $n$.

In the following subsection, assuming that the approximate solution sequence $\{W^n\}_{n\geq0}$ has been constructed and $W^n$ has continuous and
bounded derivatives up to order three in $\overline{\Omega}$, let us show that when $n\rightarrow+\infty$, $W^n$ converges to a
classical solution of the problem \eqref{eq_tr} in $\Omega$ with $0\leq t\leq t_1$ for some
$0<t_1\leq T$.
The construction of the approximate solution $W^n$ to the problem \eqref{eq_tr} will be given in Section 4.


\subsection{Classical solution of the problem transformed by Crocco transformation}

The purpose of
this subsection is  to
prove the convergence of the
iteration
 scheme \eqref{eq_seq}, which
gives
the existence of
a classical
solution to the nonlinear problem \eqref{eq_tr}.

Firstly, we have the following
comparison principle for the problem \eqref{eq_seq}.

\begin{lemma}\label{lem_seq}
Assume that 
$W^n \in C^2(\Omega)$
is the solution of \eqref{eq_seq} obtained in Proposition \ref{lem_n3},
with
$W^n|_{\zeta=1}=0$,
 and $W^{n-1}|_{\zeta=0}>0$.

(1) If a smooth function $V$ satisfies $L_n(V)\leq0$ in $\Omega$,
with $L_n(\cdot)$ being the operator given in the problem \eqref{eq_seq},
 and
\[V|_{t=0}\leq W_0,~V|_{\Gamma_-}\leq W_1,~V|_{\zeta=1}\leq0,~W^{n-1}\pd_\zeta V|_{\zeta=0}
\ge
{p_x\over U},\]
on the boundary of $\Omega$, then we have
$V\leq W^n$ in $\Omega.$

(2) If $V$ satisfies $L_n(V)\geq0$ in $\Omega$ and
\[V|_{t=0}\geq W_0,~V|_{\Gamma_-}\geq W_1,~V|_{\zeta=1}\geq0,~W^{n-1}\pd_\zeta V|_{\zeta=0}\leq {p_x\over U},\]
then $V\geq W^n$ holds in $\Omega.$
\end{lemma}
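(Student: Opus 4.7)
The plan is to prove both statements by exploiting the fact that $L_n$ is linear in its argument and applying a (degenerate) parabolic maximum principle. I focus on part (1); part (2) follows by the same argument with the roles of $V$ and $W^n$ swapped. Set $\Phi := V - W^n$. Linearity of $L_n$ together with $L_n(W^n)=0$ gives $L_n(\Phi) \leq 0$ in $\Omega$, while the boundary hypotheses translate to $\Phi|_{t=0} \leq 0$, $\Phi|_{\Gamma_-} \leq 0$, $\Phi|_{\zeta=1} \leq 0$, and $W^{n-1}\partial_\zeta \Phi|_{\zeta=0} \geq 0$. The goal is $\Phi \leq 0$ throughout $\Omega$.

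The first step is to absorb the zeroth-order term by setting $\tilde\Phi := e^{-\lambda t}\Phi$ with $\lambda > \|B\|_{L^\infty(\Omega)}$, so that $\tilde\Phi$ satisfies
\[
\partial_t\tilde\Phi + \zeta U(\partial_\xi + k\partial_\eta)\tilde\Phi + A\partial_\zeta\tilde\Phi + (\lambda + B)\tilde\Phi - (W^{n-1})^2\partial_\zeta^2\tilde\Phi \leq 0,
\]
with $\lambda + B > 0$ pointwise, and with the boundary inequalities unchanged in sign. I then argue by contradiction: suppose $\tilde\Phi$ attains a positive maximum at some $P_0 = (t_0, \xi_0, \eta_0, \zeta_0) \in \overline\Omega$. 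The four boundary conditions immediately rule out $t_0 = 0$, $(\xi_0, \eta_0) \in \gamma_-$, and $\zeta_0 = 1$, so $P_0$ must have $\zeta_0 \in [0,1)$ and $(\xi_0, \eta_0) \in D \cup (\partial D \setminus \gamma_-)$, with $t_0 \in (0, T]$.

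At $P_0$ I read off the standard maximum-principle signs: $\partial_t\tilde\Phi \geq 0$; in the $\zeta$ variable, an interior $\zeta_0 \in (0,1)$ gives $\partial_\zeta\tilde\Phi = 0$ and $\partial_\zeta^2\tilde\Phi \leq 0$, while at $\zeta_0 = 0$ the one-sided maximum forces $\partial_\zeta\tilde\Phi \leq 0$, which combined with the Robin-type condition (using $W^{n-1}|_{\zeta=0} > 0$ to divide) gives $\partial_\zeta\tilde\Phi = 0$ and then $\partial_\zeta^2\tilde\Phi \leq 0$ by Taylor expansion near the boundary. For the characteristic convection term, an interior $(\xi_0,\eta_0) \in D$ kills $\zeta U(\partial_\xi + k\partial_\eta)\tilde\Phi$; if $(\xi_0,\eta_0) \in \partial D \setminus \gamma_-$, I decompose as in \eqref{def_k} into $k_\tau\partial_\tau + k_n\partial_n$ and use $\partial_\tau\tilde\Phi = 0$ and $\partial_n\tilde\Phi \geq 0$ at a boundary maximum, together with the sign $k_n \geq 0$ guaranteed by the definition \eqref{def_gam} of $\gamma_-$, to conclude $\zeta U(\partial_\xi + k\partial_\eta)\tilde\Phi \geq 0$. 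Finally $(\lambda + B)\tilde\Phi(P_0) > 0$. Summing these five contributions produces a strictly positive quantity at $P_0$, contradicting the differential inequality, so no positive maximum exists and $\tilde\Phi \leq 0$, hence $\Phi \leq 0$.

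The main obstacle is matching the characteristic direction $\partial_\xi + k\partial_\eta$ of the $(\xi,\eta)$ convection against the geometry of $\partial D$: it is precisely the structural sign $k_n \geq 0$ on $\partial D \setminus \gamma_-$, built into assumption (H1) via \eqref{def_gam}, that makes the convection term point the right way at a boundary maximum where no Dirichlet data is imposed, and this is exactly what justifies not prescribing boundary values there in the iteration scheme \eqref{eq_seq}. The other delicate point, the degeneracy of the boundary condition at $\zeta = 0$, is handled cleanly by the hypothesis $W^{n-1}|_{\zeta=0} > 0$, which converts the condition on $W^{n-1}\partial_\zeta V$ into a usable sign condition on $\partial_\zeta \tilde\Phi$ itself. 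Part (2) is obtained by the same argument applied to $\Phi := W^n - V$ with all inequalities reversed.
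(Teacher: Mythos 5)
Your proof is correct and follows essentially the same route as the paper's: an exponential weight in $t$ to dominate the zeroth-order term, followed by a maximum/minimum principle argument in which the Robin condition at $\{\zeta=0\}$ is exploited via $W^{n-1}|_{\zeta=0}>0$ and the lateral boundary is handled through the decomposition \eqref{def_k} together with the sign of $k_n$ off $\gamma_-$ from \eqref{def_gam}. The only differences are cosmetic (you track the positive maximum of $V-W^n$ rather than the negative minimum of $W^n-V$, and you spell out the second-derivative argument at $\zeta=0$ that the paper leaves implicit), so no further comment is needed.
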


\begin{proof}[\bf{Proof.}]
A similar comparison principle was given in  \cite[Lemma 4.3.1]{Ole} for the two-dimensional problem, here the main difference is there is an additional boundary $\Gamma_-$ in the problem \eqref{eq_seq}, so for completeness,
we will only give the main steps of the proof for the first case, and one can study the second case similarly.

Set $u~\triangleq~W^n-V$. From the assumption we have
\begin{equation*}
\begin{split}
~L_n(u)=L_n(W^n)-L_n(V)\geq0,\quad in ~\Omega,\\
~u\geq0,\quad on ~\{t=0\}\cup\Gamma_-\cup\{\zeta=1\},\\
~W^{n-1}\pd_\zeta u\le 0,\quad on ~\{\zeta=0\}.
\end{split}
\end{equation*}
Let $w=ue^{-b t}$ with a constant $b$ satisfying
$|B(t,\xi,\eta,\zeta)|\leq b$ in $\Omega$.  Then,
\begin{equation*}
\begin{split}
~L_n(w)+bw=L_n(u)e^{-b t}\geq0,\quad in ~\Omega,\\
~w\geq0,\quad on ~\{t=0\}\cup\Gamma_-\cup\{\zeta=1\},\\
~W^{n-1}\pd_\zeta w\le 0,\quad on ~\{\zeta=0\}.
\end{split}
\end{equation*}
Applying the maximum principle
of degenerate parabolic operators
to the above problem, it follows that $w$ does not
 attain its negative minimum
in the interior of $\Omega$,
on the plan $\{t=T\}$,
and at $\{t=0\}\cup\{\zeta=1\}$.

From the boundary condition and $W^{n-1}|_{\zeta=0}>0$, we have $\pd_\zeta w|_{\zeta=0}\le 0$, which implies
that $w$ does not have any negative minimal point on the boundary $\{\zeta=0\}$.

On the other hand, if $w$ attains its negative minimum at a point $P$ on the boundary $\{(\xi,\eta)\in\pd D\}$, then at this point, $\pd_t w=\pd_\zeta w=0,~\pd_\zeta^2 w\geq0$ and
\[\pd_\tau w=\vec{\tau}\cdot\nabla_{(\xi,\eta)}w=0,\quad \pd_n w=\vec{n}\cdot\nabla_{(\xi,\eta)}w\leq0,\]
with $\vec{\tau}(\xi,\eta)$ and $\vec{n}(\xi,\eta)$ being the unit tangential and outward normal vectors at $(\xi,\eta)\in\pd D.$ By using \eqref{def_k}, we have
\[(\pd_\xi+k\pd_\eta)w=k_\tau\vec{\tau}\cdot\nabla_{(\xi,\eta)}w+k_n\vec{n}\cdot\nabla_{(\xi,\eta)}w.\]
From the equation of $w$, we know that at the negative minimum point $P$, $(\pd_\xi+k\pd_\eta)w\ge 0$, which implies that $P\in\Gamma_-$.
This is a contradiction to $w|_{\Gamma_-}\geq0$.

Hence,
in the whole $\Omega$,
$w\geq0$, which implies $W^n\geq V$ in $\Omega$.
\end{proof}

To show that $W^n$ is uniformly bounded  in $n$, we first define two smooth functions:
\begin{equation}\label{def_V1-2}
V_1(t,\zeta)~=~m\varphi(\zeta) e^{-\alpha t},\quad V_2(t,\zeta)~=~C(1-\zeta)e^{\beta t},
\end{equation}
where,
\begin{equation}\label{def_phi}
\varphi(\zeta)=\begin{cases}\quad e^{\alpha_1\zeta},\quad&0\leq\zeta<\delta_0,\\
{\rm smooth~connection},\quad &\delta_0\leq\zeta\leq1-\delta_0,\\
\quad 1-\zeta,\quad &1-\delta_0<\zeta\leq1,
\end{cases}
\end{equation}
has bounded first and second derivatives, and
\[\delta_0\leq\varphi(\zeta)\leq2,\qquad {\rm for~all}~0\leq\zeta\leq1-\delta_0.\]
Here,
the positive constants $\delta_0,\alpha_1,m,C,\alpha,\beta$ are chosen satisfying the following constraints:
\begin{equation}\label{def_coe}\begin{split}
&e^{\alpha_1\delta_0}\leq2, ~4m\leq M^{-1},~{m^2\over4}\alpha_1>|{p_x\over U}|_{L^\infty},\\
&C>\max\{M,{2\over m}|{p_x\over U}|_{L^\infty}\},~\beta\geq|B|_{L^\infty}+|{A\over1-\zeta}|_{L^\infty},
\end{split}\end{equation}
 for the positive constant $M$ given in \eqref{seq_ass}, and
\begin{equation}\label{def_coe1}\alpha~\geq~|B|_{L^\infty}+\max\Big\{\delta_0^{-1}\Big(|A\pd_\zeta\varphi|_{L^\infty}+C^2e^{2\beta T}|\pd_\zeta^2\varphi|_{L^\infty}\Big),~|{A\over1-\zeta}|_{L^\infty}\Big\},\end{equation}
by noting that ${A\over1-\zeta}$ is bounded, from the definition
\[A=-\zeta(1-\zeta){ U_t\over U}-(1-\zeta^2){ p_x\over U}.\]

With the above preparation, we have the following boundedness result on $W^n$.

\begin{lemma}\label{lem_seq1}
There exists $0<t_0\leq T$ such that for all $n$ and $t\in[0,t_0]$, the following estimate holds in $\Omega$:
\begin{equation}\label{seq_est1}
V_1(t,\zeta)\leq W^n(t,\xi,\eta,\zeta)\leq V_2(t,\zeta),
\end{equation}
where $V_1$ and $V_2$ are given in \eqref{def_V1-2}.
\end{lemma}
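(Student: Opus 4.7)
The plan is to establish the estimate by induction on $n$, applying the comparison principle of Lemma \ref{lem_seq} to the explicit barriers $V_1$ and $V_2$ of \eqref{def_V1-2}. The base case $n=0$ follows directly from the properties \eqref{w0} of the zero-th order approximation together with the choice of constants in \eqref{def_coe}: on each of the three regions of \eqref{def_phi} one verifies $V_1(0,\zeta)\le W_0(\xi,\eta,\zeta)\le V_2(0,\zeta)$ using $C>M$, $4m\le M^{-1}$, and the bounds $\delta_0\le\varphi\le 2$; the analogous inequality on $\Gamma_-$ is identical in form. For the inductive step, supposing $V_1\le W^{n-1}\le V_2$ holds on $\Omega\cap\{0\le t\le t_0\}$, the goal is to check that $V_2$ is a supersolution and $V_1$ is a subsolution of the linear problem \eqref{eq_seq} satisfied by $W^n$, so that Lemma \ref{lem_seq} yields the same inclusion for $W^n$.

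Since both barriers are independent of $(\xi,\eta)$, a direct computation gives
\begin{equation*}
L_n(V_2)=Ce^{\beta t}(1-\zeta)\Bigl[\beta+B-\tfrac{A}{1-\zeta}\Bigr]\ge 0
\end{equation*}
by the choice of $\beta$ in \eqref{def_coe}, and
\begin{equation*}
L_n(V_1)=me^{-\alpha t}\bigl[-\alpha\varphi+A\varphi'+B\varphi-(W^{n-1})^2\varphi''\bigr].
\end{equation*}
For the latter I would split into the three regions of \eqref{def_phi}: on $[0,\delta_0]$, $\varphi''=\alpha_1^2\varphi\ge 0$, so the term containing $(W^{n-1})^2$ is nonpositive and can be dropped, reducing the inequality to $\alpha\ge A\alpha_1+B$; on the smooth connection region $\varphi\ge\delta_0$ allows dividing by $\varphi$ and using the inductive upper bound $(W^{n-1})^2\le C^2 e^{2\beta T}$, which matches the definition \eqref{def_coe1} of $\alpha$; and on $[1-\delta_0,1]$, $\varphi''=0$ and $\varphi=1-\zeta$ reduce the inequality to $\alpha\ge|A/(1-\zeta)|_{L^\infty}+|B|_{L^\infty}$. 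The remaining boundary inequalities $V_2|_{t=0}\ge W_0$, $V_2|_{\Gamma_-}\ge W_1$, $V_2|_{\zeta=1}=0$, together with the analogous reverse inequalities for $V_1$ (using $\varphi(1)=0$), are immediate from \eqref{def_coe}.

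The heart of the argument, and the only place where $t_0$ must be chosen small, is the nonstandard Robin-type boundary condition at $\zeta=0$. Using the inductive lower bound $W^{n-1}|_{\zeta=0}\ge me^{-\alpha t}$, one estimates
\begin{equation*}
W^{n-1}\pd_\zeta V_1\big|_{\zeta=0}=m\alpha_1 e^{-\alpha t}\,W^{n-1}|_{\zeta=0}\ge m^2\alpha_1 e^{-2\alpha t},
\end{equation*}
and the condition $m^2\alpha_1/4>|p_x/U|_{L^\infty}$ of \eqref{def_coe} makes this quantity $\ge p_x/U$ as long as $e^{-2\alpha t}\ge 1/4$, i.e.\ $t\le(\ln 2)/\alpha$. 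The companion inequality $W^{n-1}\pd_\zeta V_2|_{\zeta=0}\le p_x/U$ reduces to $CW^{n-1}e^{\beta t}\ge -p_x/U$ and is handled in the same way via $Cm>2|p_x/U|_{L^\infty}$, possibly shrinking $t_0$ further. Since the resulting threshold is independent of $n$, choosing $t_0$ accordingly and invoking Lemma \ref{lem_seq} closes the induction. The main obstacle is precisely this coupling of the boundary data at $\zeta=0$ with the inductive hypothesis, which forces the constants in \eqref{def_coe}--\eqref{def_coe1} to be balanced carefully and is the sole source of the time restriction.
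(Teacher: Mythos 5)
Your proposal is correct and follows exactly the route the paper prescribes: the paper omits the details, citing Oleinik--Samokhin's Lemma 4.3.2, but the intended argument is precisely your induction on $n$ combined with the comparison principle of Lemma \ref{lem_seq} applied to the barriers $V_1,V_2$, with the sub/supersolution checks on the three regions of $\varphi$ and the Robin condition at $\zeta=0$ (via the inductive lower bound $W^{n-1}|_{\zeta=0}\ge me^{-\alpha t}$ and the constraint $\frac{m^2}{4}\alpha_1>|p_x/U|_{L^\infty}$) being the source of the smallness of $t_0$. No changes needed.
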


The proof is similar to that given in 
\cite[Lemma 4.3.2]{Ole} by using Lemma \ref{lem_seq} and the above construction of $(V_1, V_2)$, so we omit it here for brevity.

From the estimate \eqref{seq_est1}, we immediately have
\begin{corollary}\label{cor}
There exists a positive constant $M_1$ independent of $n$, such that when
$0\le t\leq t_0$,
\[M_1^{-1}(1-\zeta)\leq W^n(t,\xi,\eta,\zeta)\leq M_1(1-\zeta),\quad \forall n\geq0.\]
\end{corollary}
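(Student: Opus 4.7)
The plan is to derive the corollary immediately from Lemma~\ref{lem_seq1}, which sandwiches $W^n$ between the explicit barriers $V_1(t,\zeta)=m\varphi(\zeta)e^{-\alpha t}$ and $V_2(t,\zeta)=C(1-\zeta)e^{\beta t}$ on $[0,t_0]\times\Omega$. The remaining task is simply to compare each of $V_1$ and $V_2$ with the affine weight $(1-\zeta)$ uniformly in $(t,\xi,\eta,\zeta)$, independently of $n$.

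For the upper bound this is immediate: since $0\le t\le t_0$,
\[
W^n(t,\xi,\eta,\zeta)~\le~ V_2(t,\zeta)~=~C(1-\zeta)e^{\beta t}~\le~Ce^{\beta t_0}\,(1-\zeta),
\]
so any choice $M_1\ge Ce^{\beta t_0}$ handles the right-hand inequality.

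For the lower bound I would split along the three pieces of the definition \eqref{def_phi} of $\varphi$. On $\zeta\in[1-\delta_0,1]$ one has $\varphi(\zeta)=1-\zeta$, so $V_1(t,\zeta)\ge me^{-\alpha t_0}(1-\zeta)$ directly. On $\zeta\in[0,1-\delta_0]$ the construction of $\varphi$ gives $\varphi(\zeta)\ge\delta_0$ (both on the exponential piece $e^{\alpha_1\zeta}$, where $\varphi(0)=1\ge\delta_0$, and on the smooth connection, by the stated lower bound $\delta_0\le\varphi\le 2$), while trivially $1-\zeta\le 1$; hence
\[
V_1(t,\zeta)~\ge~ m\delta_0 e^{-\alpha t_0}~\ge~ m\delta_0 e^{-\alpha t_0}(1-\zeta).
\]
Setting $M_1\triangleq\max\bigl\{Ce^{\beta t_0},\,(me^{-\alpha t_0})^{-1},\,(m\delta_0 e^{-\alpha t_0})^{-1}\bigr\}$ then yields the claimed two-sided bound for every $n\ge 0$.

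There is essentially no obstacle here: the delicate work, namely the construction of the barriers with constants tuned by \eqref{def_coe}--\eqref{def_coe1} and the verification via the comparison principle in Lemma~\ref{lem_seq}, has already been carried out in Lemma~\ref{lem_seq1}. The corollary is a repackaging of \eqref{seq_est1}, and the only conceptual point worth flagging is that its usefulness downstream relies on having $M_1$ independent of $n$: this holds because both $V_1$ and $V_2$, and the threshold $t_0$ at which the barrier argument closes, are determined entirely by the data through \eqref{def_coe}--\eqref{def_coe1} and Assumption~\ref{ass_1}, with no reference to the iterate index. This uniform comparability to $(1-\zeta)$ will be indispensable later, since the quasilinear coefficient $(W^{n-1})^2$ in \eqref{eq_seq} is then controlled by $(1-\zeta)^2$ both above and below, uniformly in $n$.
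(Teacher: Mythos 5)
Your proof is correct and follows exactly the route the paper intends: Corollary~\ref{cor} is stated there as an immediate consequence of the barrier estimate \eqref{seq_est1} from Lemma~\ref{lem_seq1}, and your work simply makes explicit the comparison of $V_1$, $V_2$ with $(1-\zeta)$ (using $\varphi\geq\delta_0$ on $[0,1-\delta_0]$ and $\varphi=1-\zeta$ near $\zeta=1$), which is precisely the omitted elementary step. The choice of $M_1$ and the observation that it is independent of $n$ match the paper's argument.
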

\bigskip

In the rest of this section, we will consider the problem only when
$0\le t\leq t_0.$

Now, we turn to estimate the first and second order derivatives of $W^n$. Let $V^n=W^ne^{\alpha\zeta}$, where $\alpha>0$ is a constant to be determined later. Then,
from the problem \eqref{eq_seq} of $W^n$, we know that
$V^n$ satisfies
\begin{equation}\label{V_eq}
\begin{cases}
L_n^0(V^n)+B^nV^n=0,\quad in~\Omega,\\
V^{n-1}(\pd_\zeta V^n-\alpha V^n)|_{\zeta=0}={p_x\over U},
\end{cases}\end{equation}
where
\begin{equation}\label{LV_def}
L_n^0(V^n)=\pd_t V^n+\zeta U(\pd_x+k\pd_y)V^n+A^n\pd_\zeta V^n-(W^{n-1})^2\pd_\zeta^2V^n
\end{equation}
with $A^n=A-2\alpha(W^{n-1})^2$, and
$B^n=B-\alpha A-\alpha^2(W^{n-1})^2$.

To study the first and second order derivatives of $V^n$ in $\Omega$,
similar to \cite{OA1}, introduce
 functions
\begin{equation}\label{phi}
\begin{split}\Phi_n&=(V^n_t)^2+(V^n_\xi)^2+(V^n_\eta)^2+(V^n_\zeta)^2+K_0+K_1\zeta\\
&=\sum_{|\gamma|=1}|\pd_\cT^\gamma V^n|^2+(V^n_\zeta)^2+K_0+K_1\zeta,\end{split}\end{equation}
and
\begin{equation}\label{psi}
\Psi_n=\sum_{|\gamma|=2}|\pd_\cT^\gamma V^n|^2+\sum_{|\gamma|=1}|\pd_\cT^\gamma V^n_{\zeta}|^2+(V^n_{\zeta\zeta})^2+N_0+N_1\zeta,
\end{equation}
where
$K_0,K_1,N_0$ and $N_1$ are positive constants to be specified later, and
\begin{equation}\label{def_tanop}
\pd_\cT^\gamma=\pd_t^{\gamma_1}\pd_\xi^{\gamma_2}\pd_\eta^{\gamma_3},\quad \gamma=(\gamma_1,\gamma_2,\gamma_3),\quad |\gamma|=\gamma_1+\gamma_2+\gamma_3,
\end{equation}
denotes the tangential differential operator to the boundaries $\{\zeta=0\}\cup\{\zeta=1\}$. For these two functions $\Phi_n$ and $\Psi_n$, the following results hold.

\begin{lemma}\label{lem_seq2} (1)
There are constants $K_0,K_1$ and $\alpha$ independent of $n$, such that for $n\geq1$,
\begin{equation}\label{seq_est_f1}
\begin{split}
&L_n^0(\Phi_n)+R^n\Phi_n\leq0,\quad in~\Omega,\\
&\pd_\zeta\Phi_n~\geq~\alpha\Phi_n-{\alpha\over2}\Phi_{n-1},
\quad on~\{\zeta=0\},
\end{split}\end{equation}
where $R^n$ is a function of $W^{n-1}$ and its first and second order derivatives.

(2)
There are two constants $N_0$ and $N_1$ depending only on the first order derivatives of $W^n$ and $W^{n-1}$, such that for $n\geq1$, one has
\begin{equation}\label{seq_est_s1}
\begin{split}
&L_n^0(\Psi_n)+C^n\Psi_n+N_2~\leq~0,\quad in~\Omega,\\
&\pd_\zeta\Psi_n\geq\alpha\Psi_n-{\alpha\over2}\Psi_{n-1},
\quad on~\{\zeta=0\},
\end{split}\end{equation}
where $N_2$ depends only on the first order derivatives of $W^n$ and $W^{n-1}$, while $C^n$ depends on $W^{n-1}$ and its first and second order derivatives.
\end{lemma}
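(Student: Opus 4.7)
The plan is to derive each estimate by differentiating the linearized equation $L_n^0(V^n)+B^n V^n=0$ in each tangential direction $\pd_\cT^\gamma$ and in the normal direction $\pd_\zeta$, multiplying each resulting identity by twice the corresponding derivative of $V^n$, and summing. The algebraic identity
\[
L_n^0(f^2) \;=\; 2f\,L_n^0(f) \;-\; 2(W^{n-1})^2\, (f_\zeta)^2,
\]
which follows from the product rule applied to the coefficients of $L_n^0$ in \eqref{LV_def}, produces a dissipative sink $-2(W^{n-1})^2(f_\zeta)^2$ that will absorb the most dangerous commutator remainders.

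For the interior inequality in Part $(1)$, commuting $\pd_\cT^\gamma$ ($|\gamma|=1$) past $L_n^0$ yields first-order contributions in $V^n$ with coefficients involving up to first derivatives of $W^{n-1}$, $U$ and $k$, while commuting $\pd_\zeta$ past $L_n^0$ produces an additional term $-2W^{n-1}\pd_\zeta W^{n-1}\cdot V^n_{\zeta\zeta}$. Multiplying the latter by $2V^n_\zeta$ and applying Young's inequality with a parameter $\epsilon<1$ allows it to be absorbed into the dissipation $-2(W^{n-1})^2(V^n_{\zeta\zeta})^2$ coming from the identity with $f=V^n_\zeta$. All remaining first-order pieces are then bounded by $\Phi_n$ times a coefficient $R^n$ depending on $W^{n-1}$ and its first and second derivatives, and the additive constants $K_0,K_1$ are chosen so that $L_n^0(K_0+K_1\zeta)+R^n(K_0+K_1\zeta)\le 0$, which is possible because $A^n$ and $B^n$ are bounded and $K_1$ may be taken large. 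For the boundary inequality at $\{\zeta=0\}$, differentiate $V^{n-1}(\pd_\zeta V^n-\alpha V^n)=p_x/U$ tangentially to obtain
\[
\pd_\cT^\gamma\pd_\zeta V^n\big|_{\zeta=0}\;=\;\alpha\,\pd_\cT^\gamma V^n+\pd_\cT^\gamma\!\bigl((p_x/U)/V^{n-1}\bigr)\big|_{\zeta=0};
\]
since $V^{n-1}|_{\zeta=0}=W^{n-1}|_{\zeta=0}$ is bounded below by Corollary \ref{cor}, Young's inequality yields a lower bound of the form $\alpha\sum(\pd_\cT^\gamma V^n)^2-\tfrac{\alpha}{2}\sum(\pd_\cT^\gamma V^{n-1})^2-C$. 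The remaining contribution $2V^n_\zeta V^n_{\zeta\zeta}|_{\zeta=0}$ is handled by substituting $V^n_{\zeta\zeta}=(W^{n-1})^{-2}(V^n_t+A^n V^n_\zeta+B^n V^n)|_{\zeta=0}$ from the PDE and applying Young once more; the residual data-dependent constants are absorbed by $K_1$, producing $\pd_\zeta\Phi_n\ge\alpha\Phi_n-\tfrac{\alpha}{2}\Phi_{n-1}$.

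Part $(2)$ follows the same template applied to the second-order derivatives $\pd_\cT^\gamma V^n$ ($|\gamma|=2$), the mixed derivatives $\pd_\cT^\gamma V^n_\zeta$ ($|\gamma|=1$), and $V^n_{\zeta\zeta}$. Commutators now bring in up to second derivatives of $W^{n-1}$, collected into $C^n$, together with first-order products of $V^n$ which, being controlled via the already-established bounds on $\Phi_n,\Phi_{n-1}$ (hence on the first derivatives of $W^n,W^{n-1}$), can be absorbed into the additive term $N_2$. The main technical obstacle is the second-order obstructions $W^{n-1}\pd_\zeta^2 W^{n-1}\cdot V^n_{\zeta\zeta}$ and $(\pd_\zeta W^{n-1})^2 V^n_{\zeta\zeta}$ produced by differentiating the nonlinear viscous term twice: they must all be absorbed against a single dissipative $-2(W^{n-1})^2(\cdot)^2$ term generated by squaring the highest-order derivative, which forces a careful weighting in Cauchy--Schwarz but preserves the overall scheme. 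The boundary analysis at $\zeta=0$ is a direct repetition applied to second tangential derivatives of the boundary relation, with $V^{n-1}|_{\zeta=0}$ bounded below as before, yielding the claimed inequality for $\Psi_n$.
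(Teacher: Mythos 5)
Your proposal is correct and follows essentially the same route as the paper: apply tangential and normal derivatives to the equation, multiply by twice the corresponding derivatives of $V^n$ to form $\Phi_n$ (resp.\ $\Psi_n$), absorb the dangerous commutator terms containing $V^n_{\zeta\zeta}$ into the dissipative term $2(W^{n-1})^2\big(\sum|\pd_\cT^\gamma V^n_\zeta|^2+(V^n_{\zeta\zeta})^2\big)$, and obtain the boundary inequality at $\{\zeta=0\}$ by differentiating the Neumann-type condition tangentially, using the uniform lower bound on $W^{n-1}|_{\zeta=0}$, substituting $V^n_{\zeta\zeta}$ from the equation, and choosing $\alpha$, $K_1$ appropriately. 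The only cosmetic difference is that you absorb the cross term via the factorization $\pd\big[(W^{n-1})^2\big]=2W^{n-1}\pd W^{n-1}$ and Young's inequality, whereas the paper invokes the inequality $(q_x)^2\le 2\max|q_{xx}|\,q$ for the extended function $q=(W^{n-1})^2$; both yield a coefficient $R^n$ depending on $W^{n-1}$ and its first and second derivatives, so the argument is the same in substance.
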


\begin{proof}[\bf{Proof.}]
We will prove only the first part of this lemma, and the second result can be obtained similarly. The proof is divided in two steps.

 \underline{Step 1.}
 Let us consider $\pd_\zeta\Phi_n|_{\zeta=0}$ first. From the definition of $\Phi_n$, we have
\[\begin{split}\pd_\zeta\Phi_n&=2V^n_t V^n_{t\zeta}+2V^n_\xi V^n_{\xi\zeta}
+2V^n_\eta V^n_{\eta\zeta}+2V^n_\zeta V^n_{\zeta\zeta}+K_1\\
&=2\sum_{|\gamma|=1}\pd_\cT^\gamma V^n\cdot\pd_\cT^\gamma V^n_\zeta+2V^n_\zeta V^n_{\zeta\zeta}+K_1.
\end{split}\]
By Lemma \ref{lem_seq1}, the inequality $W^{n}|_{\zeta=0}=V^n|_{\zeta=0}\geq h_0>0$ holds for all $n$. Hence,  from the boundary condition given in \eqref{V_eq},
 we obtain on $\{\zeta=0\}$ that
\[\begin{split}
V^n_t V^n_{t\zeta}&=V^n_t\cdot\left[\alpha V_t^n+({p_x\over U})_t\cdot{1\over V^{n-1}}-{p_x\over U}\cdot{V^{n-1}_t\over (V^{n-1})^2}\right]\\
&\geq\alpha(V^n_t)^2-{\alpha\over4}(V^n_t)^2-{2\over\alpha}\left[({p_x\over U})_t\cdot{1\over V^{n-1}}\right]^2
-{2\over\alpha}\left[{p_x\over U}\cdot{1\over (V^{n-1})^2}\right]^2(V^{n-1}_t)^2,
\end{split}\]
which implies that
\[V_t^n V^n_{t\zeta}\geq{3\alpha\over4}(V^n_t)^2-{\alpha\over4}(V^{n-1}_t)^2
-K_2,\]
by choosing a constant $K_2\geq{2\over\alpha}\left|({p_x\over U})_t\cdot{1\over V^{n-1}|_{\zeta=0}}\right|_{L^\infty}^2$,
and
$\alpha>0$ large enough
such that
\begin{equation}\label{def_alpha}{2\over\alpha}\left[{p_x\over U}\cdot{1\over (V^{n-1})^2|_{\zeta=0}}\right]^2\leq{\alpha\over4}.\end{equation}
Obviously, the constants $K_2$ and $\alpha$ are independent of $n$.

Similarly, we can get the following two inequalities:
\[V_\xi^n V^n_{\xi\zeta}\geq{\alpha\over2}(V^n_\xi)^2-{\alpha\over4}(V^{n-1}_\xi)^2
-K_3,\]
and
\[V_\eta^n V^n_{\eta\zeta}\geq{\alpha\over2}(V^n_\eta)^2-{\alpha\over4}(V^{n-1}_\eta)^2
-K_3,\]
where $K_3$ is a constant independent of $n$, satisfying
 $$K_3\geq{1\over\alpha}\max\left\{\left|({p_x\over U})_\xi\cdot{1\over V^{n-1}|_{\zeta=0}}\right|_{L^\infty}^2,
\left|({p_x\over U})_\eta\cdot{1\over V^{n-1}|_{\zeta=0}}\right|_{L^\infty}^2\right\}.$$

On the other hand, from \eqref{V_eq},
we have that on $\{\zeta=0\}$,
\[\begin{split}V^n_\zeta V^n_{\zeta\zeta}&={1\over (W^{n-1})^2}V^n_\zeta\cdot[V^n_t+A^nV^n_\zeta+B^nV^n]\geq-{\alpha\over4}(V^n_t)^2-K_4,\end{split}\]
for a positive constant $K_4$ independent of $n$,
by using Lemma \ref{lem_seq1}
and
\begin{equation}\label{est_f_ze}
V^n_\zeta|_{\zeta=0}\leq K_5,
\end{equation}
 from the boundary condition  \eqref{V_eq}, with $K_5$ being a positive constant  independent of $n$.

Thus, we have that on $\{\zeta=0\}$,
\[\begin{split}\pd_\zeta\Phi_n~& \geq~\alpha[(V^n_t)^2+(V^n_\xi)^2+(V^n_\eta)^2]-
{\alpha\over2}[(V^{n-1}_t)^2+(V^{n-1}_\xi)^2+(V^{n-1}_\eta)^2]\\
&\qquad
-2(K_2+K_3+K_4)+K_1\\
& \geq~\alpha\Phi_n-{\alpha\over2}\Phi_{n-1}-K_6+K_1,
\end{split}\]
for a positive constant $K_6$ independent of $n$, which implies
the estimate \eqref{seq_est_f1} on
 $\{\zeta=0\}$  by choosing $K_1\geq K_6$.

 \underline{Step 2.}
 We turn to calculate $L_n^0(\Phi_n)$. Applying
the operator
\[2V^n_t\pd_t+2V^n_\xi\pd_\xi+2V^n_\eta\pd_\eta+2V^n_\zeta\pd_\zeta\]
to the equation $L^0_n(V^n)+B^nV^n=0,$
we have
\begin{equation}\label{2.68}
L_n^0(\Phi_n)+B^n\Phi_n-A^nK_1-B^n(K_0+K_1\zeta)+I_1+I_2+I_3=0,
\end{equation}
where
\[\begin{split}
&I_1
=2(W^{n-1})^2\cdot\left(\sum_{|\gamma|=1}|\pd_\cT^\gamma V^n_\zeta|^2+(V^n_{\zeta\zeta})^2\right),\\
&I_2
=-2V^n_{\zeta\zeta}\cdot\Big\{\sum_{|\gamma|=1}\pd_\cT^\gamma V^n\cdot\pd_\cT^\gamma [(W^{n-1})^2]+V^n_\zeta\cdot\pd_\zeta[(W^{n-1})^2]\Big\},\\
&I_3=2V^n_\xi\cdot\big(\zeta\sum_{|\gamma|=1}\pd_\cT^\gamma V^n \pd_\cT^\gamma U+V_\zeta^n U\big)
+2V^n_\eta\cdot\big(\zeta\sum_{|\gamma|=1}\pd_\cT^\gamma V^n\pd_\cT^\gamma(kU)+V_\zeta^n kU\big)\\
&\qquad
+2V^n_\zeta\cdot\big(\sum_{|\gamma|=1}\pd_\cT^\gamma V^n \pd_\cT^\gamma A^n+V_\zeta^n A_\zeta^n\big)
+2V^n\cdot\big(\sum_{|\gamma|=1}\pd_\cT^\gamma V^n \pd_\cT^\gamma B^n+V_\zeta^n B_\zeta^n\big).
\end{split}\]

Obviously, one has
\begin{equation}\label{I-2}
\begin{split}I_2\geq &-R_1\left(\sum_{|\gamma|=1}|\pd^\gamma_\cT V^n|^2+(V^n_\zeta)^2\right)
-{(V^n_{\zeta\zeta})^2\over R_1}\Big\{\sum_{|\gamma|=1}\Big|\pd_\cT^\gamma[(W^{n-1})^2]\Big|^2
+\Big|\pd_\zeta\big[(W^{n-1})^2\big]\Big|^2\Big\},\end{split}\end{equation}
where $R_1$ is a positive constant, and
\[I_3\geq-R_2\left(\sum_{|\gamma|=1}|\pd_\cT^\gamma V^n|^2+(V^n_\zeta)^2\right)-K_7,\]
with a constant $R_2$ depending on the bound of the first order derivatives of $W^{n-1}$ and
a constant $K_7$  independent of $n$.

To control the second term on the right hand side of the above inequality \eqref{I-2} of $I_2$,
we use the fact that the following inequality holds for an abitrary non-negative function $q(x)$
possessing
 bounded second derivatives for all $x$,
\begin{equation}\label{ineq_de}
(q_x)^2\leq2\left(\max|q_{xx}|\right)q.\end{equation}
The function $(W^{n-1})^2$ can be extended to  the whole space so that it is still
 non-negative, bounded and the magnitudes of its second order derivatives do
 not exceed the corresponding bound of the original function.
Hence, by using \eqref{ineq_de} we get
\[{(V^n_{\zeta\zeta})^2\over R_1}\left\{\sum_{|\gamma|=1}\Big|\pd_\cT^\gamma[(W^{n-1})^2]\Big|^2+\Big|\pd_\zeta\big[(W^{n-1})^2\big]\Big|^2\right\}\leq (W^{n-1})^2\cdot(V^n_{\zeta\zeta})^2,\]
when $R_1$ is sufficiently large and depends on the second order derivatives of $W^{n-1}$.

Therefore, from \eqref{2.68} we obtain
\[L_n^0(\Phi_n)+(B^n-R_1-R_2)\Phi_n-(B^n-R_1-R_2)(K_0+K_1\zeta)-A^nK_1\leq0,\]
which implies
\[L_n^0(\Phi_n)+R^n\Phi_n~\leq~0, \qquad {\rm in}~ \Omega,\]
for a function $R^n$ depending on $W^{n-1}$ and its first and second order derivatives,
by  choosing a suitable constant $K_0$.
\end{proof}

Next, we have the boundedness of the first and second order derivatives of
 $W^n$ stated in the following theorem.

\begin{theorem}\label{thm_seq}
  Suppose that the data in the problem \eqref{eq_tr} satisfies Assumption \ref{ass_1},
then there exists a $0<t_1\leq t_0$ such that the first and second order derivatives of the solution $W^n$ to \eqref{eq_seq} are bounded,
uniformly in $n$,
in $\Omega$ for $0\leq t\leq t_1$.
\end{theorem}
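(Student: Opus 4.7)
The plan is to bound, uniformly in $n$, the auxiliary functions $\Phi_n$ and $\Psi_n$ introduced in \eqref{phi}--\eqref{psi}, since these control respectively the first and second order derivatives of $V^n = W^n e^{\alpha\zeta}$, hence of $W^n$ itself. The engine of the argument is the pair of differential inequalities established in Lemma \ref{lem_seq2}, coupled through the indices $n$ and $n-1$ both in the zero-th order coefficients $R^n, C^n, N_2$ and, crucially, in the Robin-type boundary conditions on $\{\zeta=0\}$.

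Set $M_n = \sup_{[0,t_1] \times \bar\Omega}\Phi_n$ and $N_n = \sup_{[0,t_1] \times \bar\Omega}\Psi_n$, and proceed by induction on $n$. Under the inductive hypothesis that $M_{n-1}, N_{n-1} \le K_*$ for a fixed constant $K_*$ depending only on the data and on the zero-th order approximation \eqref{w0}, the coefficients $R^n, C^n, N_2$ become uniformly bounded in $n$. I would then apply the weak maximum principle for degenerate parabolic operators to $e^{-\lambda t}\Phi_n$ on $\bar\Omega \times [0,t_1]$ with $\lambda$ exceeding $\sup|R^n|$: the interior inequality in \eqref{seq_est_f1} forces the supremum to be attained on the parabolic boundary. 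At a maximum on $\{\zeta=0\}$ we have $\partial_\zeta\Phi_n \le 0$, which combined with the Robin inequality in \eqref{seq_est_f1} yields the pointwise dichotomy $\Phi_n \le \tfrac{1}{2}\Phi_{n-1}$; on $\{t=0\}$ and on $\Gamma_-$ the values are controlled by the initial/boundary data via Assumption \ref{ass_1}; at $\{\zeta=1\}$, the equation \eqref{V_eq} degenerates to a first-order transport equation for $V^n_\zeta|_{\zeta=1}$ (since $W^{n-1}|_{\zeta=1}=0$ and tangential derivatives of $V^n$ vanish there), which together with compatibility condition 2(ii) of Assumption \ref{ass_1} gives a uniform bound along characteristics. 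Tracking constants produces a recursion of the form
\begin{equation*}
M_n \le C_\star + \tfrac{1}{2}\, e^{\lambda t_1}\, M_{n-1},
\end{equation*}
with $C_\star$ depending only on the data. Choosing $t_1$ small so that $\tfrac{1}{2}e^{\lambda t_1} \le \tfrac{3}{4}$, induction on $n$ delivers $M_n \le 4C_\star + M_0$, and we fix $K_* \ge 4C_\star + M_0$.

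The argument for $\Psi_n$ runs in parallel using \eqref{seq_est_s1}. The forcing term $N_2$ is controlled because the just-established bound on $M_n$ dominates the first-order derivatives appearing in $N_2$, and the coefficient $C^n$ is under control thanks to $M_n$ (first-order part) and the inductive hypothesis $N_{n-1} \le K_*$ (second-order part). The same maximum-principle plus Robin-boundary analysis then yields a recursion $N_n \le C_\star' + \tfrac{1}{2}e^{\lambda' t_1} N_{n-1}$, and a possible further shrinking of $t_1$ closes the coupled induction and gives the uniform-in-$n$ bound stated in the theorem.

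The main technical obstacle I foresee is the behavior on the degenerate boundary $\{\zeta=1\}$: the coefficient $(W^{n-1})^2$ of $\partial_\zeta^2$ vanishes there, so a standard Hopf-type argument does not directly yield a bound on $V^n_\zeta|_{\zeta=1}$. The auxiliary terms $K_1\zeta$ and $N_1\zeta$ in the definitions of $\Phi_n$ and $\Psi_n$ are inserted precisely to create a positive contribution to $\partial_\zeta$ that compensates for this degeneracy, and the delicate step will be verifying quantitatively that the transport equation induced along $\{\zeta=1\}$, combined with the compatibility hypotheses 2(ii)--2(iii) of Assumption \ref{ass_1}, yields a bound on $V^n_\zeta|_{\zeta=1}$ independent of $n$, which is what allows the recursion to close.
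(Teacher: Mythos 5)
Your overall skeleton---induction on $n$, maximum principle applied to $e^{-\gamma t}\Phi_n$ and $e^{-\gamma t}\Psi_n$ via Lemma \ref{lem_seq2}, the Robin inequality at $\{\zeta=0\}$ producing the contraction $\Phi_n\le\tfrac12\Phi_{n-1}$, and shrinking $t_1$ to absorb the exponential factor---is exactly the paper's. However, two pieces of the boundary case analysis are genuinely missing or would fail as written. First, the lateral boundary: the parabolic boundary of $\Omega$ contains all of $(0,t_1]\times\pd D\times(0,1)$, not only $\Gamma_-$, and no data at all is prescribed on the outflow part $\pd D\setminus\gamma_-$; your case list never mentions it. The paper excludes a maximum there by noting that at such a point $\pd_\tau\Phi_n^1=0$ and $\pd_n\Phi_n^1\ge0$, so \eqref{2.80} forces $\zeta Uk_n\pd_n\Phi_n^1<0$, hence $k_n<0$, i.e.\ the point must lie on $\Gamma_-$. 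Moreover, even on $\Gamma_-$ the quantity $\Phi_n$ is not simply ``controlled by the boundary data via Assumption \ref{ass_1}'' as you assert: it contains the normal derivative of $V^n$, which is not prescribed data. The paper recovers it from the equation restricted to $\Gamma_-$ (see \eqref{bd_lateral}), giving $\pd_n W^n=f_1/(\zeta Uk_n)$, and boundedness then hinges on the structural fact $f_1=\mathcal{O}(\zeta k_n)$ coming from the compatibility conditions. These are precisely the places where (H1) and condition (2) of Assumption \ref{ass_1} enter, and your sketch skips them.

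Second, your treatment of $\{\zeta=1\}$ is both unnecessary and, as stated, incorrect. Restricting \eqref{V_eq} to $\{\zeta=1\}$ yields no transport equation for $V^n_\zeta$: there $W^{n-1}=0$, $V^n=0$ and $A^n=A-2\alpha(W^{n-1})^2=0$, so every term involving $V^n_\zeta$ drops out and the restricted equation is vacuous (one would have to differentiate the equation in $\zeta$ first, which is not what you wrote and is not needed). The paper's resolution is much simpler: at a maximum point of $\Phi_n^1$ on $\{\zeta=1\}$ the tangential derivatives vanish, while the terms $A\pd_\zeta\Phi_n^1$ and $(W^{n-1})^2\pd_\zeta^2\Phi_n^1$ vanish because $A|_{\zeta=1}=0$ and $W^{n-1}|_{\zeta=1}=0$, so the inequality \eqref{2.80} with zero-order coefficient $R^n+\gamma>0$ and $\Phi_n^1\ge1$ rules out a maximum there; no bound on $V^n_\zeta|_{\zeta=1}$ is ever required. (Relatedly, the terms $K_1\zeta$, $N_1\zeta$ are inserted to make the Robin inequalities at $\{\zeta=0\}$ in \eqref{seq_est_f1}--\eqref{seq_est_s1} hold, not to compensate the degeneracy at $\zeta=1$.) With these repairs your recursion $M_n\le C_\star+\tfrac12 e^{\gamma t_1}M_{n-1}$ and the coupled bound for $\Psi_n$ close exactly as in the paper.
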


\begin{proof}[\bf{Proof.}]
From the definitions of $\Phi_n$ and $\Psi_n$ given in \eqref{phi} and \eqref{psi} respectively, it suffices to
prove that there exist constants $M_1,M_2$ and $t_1>0$, such that $\Phi_n\leq M_1$ and $\Psi_n\leq M_2$ hold for all $0\le t\leq t_1$
by induction on $n$.

The case of $n=0$ follows immediately
by noting that $W^0$ can be
chosen satisfying the requirement
 for all $t\leq T$. Assume that  $\Phi_i\leq M_1$ and $\Psi_i\leq M_2$  hold for all $0\le i\leq n-1$ when
$0\le t\leq t_1$, with
$t_1$ to be determined later. Denote by $\Omega_1=\Omega\cap \{t \leq t_1\}$.

Letting
$$\Phi_n^1=\Phi_ne^{-\gamma t},$$
from Lemma \ref{lem_seq2} we have that in $\Omega$, $\Phi_n^1\geq1$,
\begin{equation}\label{2.80}
L_n^0{\Phi_n^1}+(R^n+\gamma)\Phi_n^1\leq0,\end{equation}
and
\begin{equation}\label{bd_phi}
\pd_\zeta\Phi_n^1\geq\alpha\Phi_n^1-{\alpha\over2}\Phi_{n-1}^1,\quad{\rm on} ~\{\zeta=0\}.
\end{equation}
We choose $\gamma$, depending only on $M_1$ and $M_2$, such that $R^n+\gamma>0$ in $\Omega_1$. Then, by the maximum principle, from the equation \eqref{2.80} the function $\Phi_n^1$ does not attain its maximum value within $\Omega_1$, nor on $\{t=t_1\}\cup\{\zeta=1\}$
(since $W^{n-1}|_{\zeta=1}=0$).

If $\Phi_n^1$ attains its maximum at $\{t=0\}$, then we have
\[\Phi_n^1\leq\max\{\Phi_n|_{t=0}\}\leq K_8,\]
where $K_8$ is independent of $n$ and is determined by the parameters $k,U,A,B,W_0$ of the problem \eqref{eq_seq}, by using $V^n_t|_{t=0}=e^{\alpha\zeta}W^n_t|_{t=0}$ and
\[\begin{split}
W^n_t|_{t=0}=
(W_0)^2\pd_\zeta^2W_0-\zeta U|_{t=0}\cdot(\pd_\xi+k\pd_y)W_0-A^n|_{t=0}\cdot\pd_\zeta W_0-B^n|_{t=0}\cdot W_0.
\end{split}\]

If $\Phi_n^1$ attains its maximum
at a point $P$ on the boundary $\{(\xi,\eta)\in\pd D\}$, then at this point, we have that $\pd_t\Phi_n^1=\pd_\zeta \Phi_n^1=0,~\pd_\zeta^2 \Phi_n^1\leq0$ and
\[\pd_\tau\Phi_n^1~=~0,~\pd_n\Phi_n^1~\geq~0.\]
 By using
\[(\pd_\xi+k\pd_\eta)\Phi_n^1~=~k_\tau\pd_\tau\Phi_n^1+k_n\pd_n\Phi_n^1,\]
and $k_n<0$ on the boundary $\Gamma_-$,
from \eqref{2.80}
we get $P\in\Gamma_-$. On the other hand, from the problem \eqref{eq_seq}, we have
\begin{equation}\label{bd_g}
\pd_\tau W^n|_{\Gamma_-}~=~\pd_\tau W_1,
\end{equation}
and
\begin{equation}\label{bd_lateral}\left[\zeta U(\pd_\xi+k\pd_\eta)W^n\right]|_{\Gamma_-}=\left[(W_1)^2\pd_\zeta^2W_1-\pd_\tau W_1-A\pd_\zeta W_1-BW_1\right]|_{\Gamma_-}.\end{equation}
Hence, from the assumption of compatibility conditions,
we know that $\pd_n W^n=f_1/(\zeta U k_n)$ on $\Gamma_-$, with $f_1$  given in \eqref{def_w1}.
Thus, we know that $\pd_\tau W^n|_{\Gamma_-}$ and $\pd_n W^n|_{\Gamma_-}$ are bounded,
which implies that $\pd_\xi W^n$ and $\pd_\eta W^n$ are also bounded on the boundary $\Gamma_-$.
Therefore, we have
\[\Phi_n^1\leq\max\{e^{-\gamma t}\Phi_n|_{\Gamma_-}\}\leq K_9,\]
where $K_9$ is independent of $n$ and is determined by the parameters $k,U,A,B$ and $W_1$ of the problem \eqref{eq_seq}.

Finally, if $\Phi_n^1$ attains its maximum
at $\{\zeta=0\}$, then at this point, we have $\pd_\zeta\Phi_n^1\leq0$, and from \eqref{bd_phi} it follows that $\Phi_n^1\leq{1\over2}\Phi_{n-1}^1$, which implies that
\[\Phi_n^1\leq\max\{\Phi_n^1|_{\zeta=0}\}\leq {1\over2}\max\{\Phi_{n-1}^1\}\leq {M_1\over2},\]
by the induction assumption.

In conclusion, we obtain
\[\Phi_n^1\leq\max\{K_8,K_9,{M_1\over2}\},\quad in~\Omega_1,\]
which implies that
\[\Phi_n\leq\max\{K_8,K_9,{M_1\over2}\}e^{\gamma t},\quad in~\Omega_1.\]
Let $t_2\leq t_0$ be such that $e^{\gamma t_2}\le2$, and set $M_1=2\max\{K_8,K_9\}$. Obviously, $t_2$ and $M_1$ are independent of $n$. Then, it follows that $\Phi_n\leq M_1$ for $t\leq t_2$.

Similarly, we can obtain that $\Psi_n\leq M_2$ when $t\leq t_3$ for some $t_3$, where the choice of $t_3$ also depends only on the constants $M_1$ and $M_2$ given by the parameters $k,U,A,B,W_0,W_1$ of the problem \eqref{eq_tr}.

It follows that $\Phi_n\leq M_1$ and $\Psi_n\leq M_2$ for all $n$ when $t\leq t_1\triangleq \min\{t_2,t_3\}$, from which we obtain the boundedness of the first and second order derivatives of $W^n$.
\end{proof}

We can now prove the following existence result.

\begin{theorem}\label{thm_tr}
Suppose that the data in the problem \eqref{eq_seq} satisfies Assumption \ref{ass_1}, and let $\Omega_1=\Omega\bigcap\{t\leq t_1\}$ with $t_1$ being given in Theorem \ref{thm_seq}.
Then  the problem \eqref{eq_tr} has a unique solution $W$ in $\Omega_1$ satisfying that $W>0$ in $\Omega_1$, $W$ is bounded and continuous on $\overline{\Omega}_1$, and its first order derivatives and $W_{\zeta\zeta}$ are continuous and bounded in $\Omega_1$.
Moreover, we have the estimate
\begin{equation}\label{2.86}
M_1^{-1}(1-\zeta)\le W(t,\xi, \eta,\zeta)\le M_1(1-\zeta), \quad \forall (t,\xi, \eta,\zeta)\in\Omega_1
\end{equation}
for a positive constant $M_1>0$.
\end{theorem}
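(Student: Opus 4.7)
My plan is to pass to the limit $n\to\infty$ in the iteration scheme \eqref{eq_seq}, exploiting the uniform-in-$n$ $C^2$ bounds from Theorem \ref{thm_seq}, and to obtain the needed strong compactness by showing that the iterates are contractive in $L^\infty$ on a short enough time interval. Uniqueness will come from the same type of argument applied to the difference of two hypothetical solutions, and the bound \eqref{2.86} is the pointwise limit of Corollary \ref{cor}.

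The main technical step is the Cauchy estimate. Set $Z^n\triangleq W^n-W^{n-1}$ and subtract the equations of \eqref{eq_seq} at levels $n$ and $n-1$. One obtains the linear degenerate parabolic problem
\begin{equation*}
L_n(Z^n)=(W^{n-1}+W^{n-2})\,Z^{n-1}\,\partial_\zeta^2 W^{n-1}\quad \text{in }\Omega_1,
\end{equation*}
together with $Z^n|_{t=0}=0$, $Z^n|_{\Gamma_-}=0$, $Z^n|_{\zeta=1}=0$, and the Robin-type condition
\begin{equation*}
W^{n-1}\,\partial_\zeta Z^n\big|_{\zeta=0}=-Z^{n-1}\,\partial_\zeta W^{n-1}\big|_{\zeta=0}.
\end{equation*}
By Theorem \ref{thm_seq} and Corollary \ref{cor}, the right-hand side is pointwise majorized by $C(1-\zeta)|Z^{n-1}|$ and the coefficients $|A^n|$, $|B^n|$, $|\partial_\zeta W^{n-1}|$, $\|\partial_\zeta^2 W^{n-1}\|_{L^\infty}$ are bounded independently of $n$. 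I would then apply the comparison principle of Lemma \ref{lem_seq} against an explicit barrier of the form $V^{\pm}=\pm\mu(t)(1+\kappa\zeta)$, where $\mu$ is linear in $t$ and $\kappa>0$ is picked so that $\partial_\zeta V^{\pm}|_{\zeta=0}$ dominates the Robin source $|Z^{n-1}\partial_\zeta W^{n-1}|/W^{n-1}$, using the lower bound $W^{n-1}|_{\zeta=0}\geq M_1^{-1}$ from Corollary \ref{cor}. This yields
\begin{equation*}
\|Z^n\|_{L^\infty(\Omega_1)}\le C_0 t_1\|Z^{n-1}\|_{L^\infty(\Omega_1)},
\end{equation*}
so that shrinking $t_1$ (still within the horizon of Theorem \ref{thm_seq}) to make $C_0 t_1<1$ forces $\{W^n\}$ to be Cauchy in $L^\infty(\Omega_1)$, with a uniform limit $W$.

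Once the uniform limit is in hand, I would combine it with the uniform $C^2$ bounds of Theorem \ref{thm_seq} and standard interpolation to upgrade the convergence to $W^n\to W$ in $C^{1,\alpha}(\overline{\Omega}_1)$ for any $\alpha\in(0,1)$. The second-order derivatives $\partial_\zeta^2 W^n$ converge weakly-$*$ in $L^\infty$, while the coefficient $(W^{n-1})^2$ converges uniformly to $W^2$, so every term in $L_n(W^n)=0$ passes to the limit in the distributional sense and $W$ solves \eqref{eq_tr} weakly. Because Corollary \ref{cor} passes to the pointwise limit and gives $W\ge M_1^{-1}(1-\zeta)>0$ away from $\{\zeta=1\}$, one may invert the coefficient $W^2$ and recover $\partial_\zeta^2 W$ algebraically from the equation; the resulting formula involves only continuous functions of $t,\xi,\eta,\zeta,W,\partial W$, so $W_{\zeta\zeta}$ is continuous (and bounded) in $\Omega_1$. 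The boundary conditions $W|_{\zeta=1}=0$, $W|_{\Gamma_-}=W_1$, $W|_{t=0}=W_0$ pass to the limit by uniform convergence, and the nonlinear condition $W\partial_\zeta W|_{\zeta=0}=p_x/U$ passes from $W^{n-1}\partial_\zeta W^n|_{\zeta=0}=p_x/U$ thanks to $C^1$ convergence.

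For uniqueness, if $W$ and $\tilde W$ are two solutions, $Z=W-\tilde W$ satisfies a linear problem of exactly the same structure as the one for $Z^n$ above, with vanishing initial, lateral, and $\{\zeta=1\}$ data, and a linear Robin condition at $\{\zeta=0\}$; the same barrier argument forces $Z\equiv 0$. The estimate \eqref{2.86} is the pointwise $n\to\infty$ limit of Corollary \ref{cor}. The main obstacle throughout is the degenerate Robin condition at $\{\zeta=0\}$, since the coefficient $W^{n-1}$ in front of $\partial_\zeta Z^n$ there is only bounded below by $M_1^{-1}$ and the source contains a derivative of $W^{n-1}$; this is precisely where the quantitative lower bound of Corollary \ref{cor} and the uniform $W^{1,\infty}$ control of Theorem \ref{thm_seq} are needed simultaneously in order to choose a barrier whose outward normal derivative at $\{\zeta=0\}$ strictly dominates the coupling to $Z^{n-1}$.
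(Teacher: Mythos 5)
Your overall strategy---estimate the consecutive differences $Z^n=W^n-W^{n-1}$, pass to the limit using the uniform bounds of Theorem \ref{thm_seq}, prove uniqueness by the same device for the difference of two solutions, and obtain \eqref{2.86} from Corollary \ref{cor}---is the same as the paper's, but the central contraction step has a genuine gap at the Robin boundary $\{\zeta=0\}$. Your barrier $V^{\pm}=\pm\mu(t)(1+\kappa\zeta)$ is increasing in $\zeta$, so its slope points the wrong way: to exclude a touching point of $Z^n$ and $V^{+}$ on $\{\zeta=0\}$ one needs $\pd_\zeta V^{+}|_{\zeta=0}$ to lie strictly \emph{below} the prescribed flux $\pd_\zeta Z^n|_{\zeta=0}=-Z^{n-1}\pd_\zeta W^{n-1}/W^{n-1}$, which can be as negative as $-c_*\|Z^{n-1}\|_{L^\infty}$ with $c_*\sim\sup|p_x/U|/h_0^2$ (here $h_0>0$ is the uniform lower bound of $W^{n-1}|_{\zeta=0}$); a positive slope $\kappa\mu$ never achieves this, and no contradiction with $\pd_\zeta(Z^n-V^{+})\le0$ at such a point arises. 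Even after reversing the barrier's monotonicity in $\zeta$, the boundary requirement forces $\kappa\mu(t)\gtrsim c_*\|Z^{n-1}\|_{L^\infty}$ for \emph{all} $t$ (in particular at $t=0$, so $\mu$ cannot vanish there), and then $\sup V^{+}\gtrsim c_*\|Z^{n-1}\|_{L^\infty}(1+\kappa)/\kappa$: the boundary coupling contributes an $O(1)$ constant, independent of $t_1$, so the claimed bound $\|Z^n\|_{L^\infty}\le C_0t_1\|Z^{n-1}\|_{L^\infty}$ does not follow and shrinking $t_1$ cannot make the scheme contractive. (Replacing $\|Z^{n-1}\|$ by the bound $|Z^{n-1}(t)|\le Ct$ from the uniform time-derivative estimates gives smallness in $t_1$ but not proportionality to $\|Z^{n-1}\|$, hence no Cauchy property.)

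This is exactly the difficulty the paper's proof is organized around: it sets $V_1^n=(W^n-W^{n-1})e^{\alpha t+\beta\zeta}$ and runs a weighted maximum principle. Differentiating the weight turns the boundary condition into $W^{n-1}\pd_\zeta V_1^n|_{\zeta=0}=\big(\beta W^{n-1}V_1^n-\pd_\zeta W^{n-1}V_1^{n-1}\big)|_{\zeta=0}$, so at a boundary extremum the solution's own value carries the large coefficient $\beta W^{n-1}\ge\beta h_0$ and one gets $|V_1^n|\le (c/\beta)\max|V_1^{n-1}|$, while choosing $\alpha<0$ with $|\alpha|$ large makes the interior zeroth-order coefficient dominate the coupling term $\pd_\zeta^2W^{n-1}(W^{n-1}+W^{n-2})$; this produces a fixed contraction factor $q<1$ on all of $\Omega_1$ with no further shrinking of $t_1$. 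The latter point also matters for the statement itself: the theorem asserts existence on the $\Omega_1$ determined by Theorem \ref{thm_seq}, whereas your argument, even with the contraction repaired, would give existence only on a smaller interval unless supplemented by a continuation argument, and the iterates all carry their data at $t=0$, so the scheme cannot simply be restarted at the smaller time. The same exponential weight is what makes the uniqueness step work, since there the Robin condition couples $\pd_\zeta(W-W')$ to $W-W'$ itself with an $O(1)$ coefficient. The remaining parts of your proposal (interpolation to upgrade to $C^1$ convergence, recovering and bounding $W_{\zeta\zeta}$ from the equation where $W>0$, passing to the limit in the boundary conditions, and deducing \eqref{2.86} from Corollary \ref{cor}) do match the paper's argument.
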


\begin{proof}[\bf{Proof.}]
First, we prove the existence of a solution $W$ to the problem \eqref{eq_tr}. In Theorem \ref{thm_seq}, we have shown that there exists a $t_1>0$ such that the first and second order derivatives of $W^n$ to the problem $\eqref{eq_seq}$ in $\Omega_1$ are bounded uniformly in $n$.  We are going to prove that $W^n$ converges uniformly in $\Omega_1$.

Letting $V^n=W^n-W^{n-1},~n\geq1$,  from \eqref{eq_seq} we know that for all $n\geq2$, $V^n$ satisfies the following problem in $\Omega_1$:
\begin{equation}\label{eq_dif}
\begin{cases}
\partial_t V^n+\zeta U(\partial_\xi +k\partial_\eta)V^n+A\pd_\zeta V^n+BV^n-(W^{n-1})^2\pd_\zeta^2 V^n\\
\qquad\qquad-\pd^2_\zeta W^{n-1}(W^{n-1}+W^{n-2})V^{n-1}=0,\\
\left(W^{n-1}\pd_\zeta V^n+\pd_\zeta W^{n-1} V^{n-1}\right)|_{\zeta=0}=0,
\quad V^n|_{\Gamma_-}= 0,\\
V^n|_{t=0}= 0.
\end{cases}\end{equation}
Moreover, from Lemma \ref{lem_seq} we have $V^n|_{\zeta=1}=0$.

Set $V^n_1=V^ne^{\alpha t+\beta\zeta}$. From \eqref{eq_dif} it follows that in $\Omega_1$,
\begin{equation}\label{2.87}
\begin{split}
&\partial_t V_1^n+\zeta U(\partial_\xi +k\partial_\eta)V_1^n+\left[A+2\beta(W^{n-1})^2\right]\pd_\zeta V^n-(W^{n-1})^2\pd_\zeta^2 V_1^n\\
&=\pd^2_\zeta W^{n-1}(W^{n-1}+W^{n-2})V^{n-1}_1+\left[\alpha-B+\beta A-\beta^2(W^{n-1})^2\right]V^n_1,
\end{split}\end{equation}
and on the boundary,
\begin{equation}\label{2.88}
\begin{cases}
V_1^n|_{t=0}=V_1^n|_{\zeta=1}=V^n_1|_{\Gamma_-}=0,\\[2mm]
W^{n-1}\pd_\zeta V_1^n|_{\zeta=0}=\left(\beta W^{n-1} V_1^n-\pd_\zeta W^{n-1} V^{n-1}_1\right)|_{\zeta=0}.
\end{cases}\end{equation}

By using Theorem \ref{thm_seq} and $W^{n-1}|_{\zeta=0}\geq h_0>0$, we choose the constant $\beta>0$ such that
 when $\zeta=0$,
\[\max\{\left|\pd_\zeta W^{n-1}\right|\}< q\beta \min\{W^{n-1}\},\]
for a positive constant $q<1$. Moreover, we choose the constant $\alpha<0$ such that in $\Omega_1$,
\[\max\{\left|\pd^2_\zeta W^{n-1}(W^{n-1}+W^{n-2})\right|\}< q\left(-\alpha-\max\{\left|B-\beta A+\beta^2(W^{n-1})^2\right|\}\right).\]

Hence, for the problem \eqref{2.87}-\eqref{2.88},
if $|V^n_1|$ attains its maximum at some interior or boundary point of  $\Omega_1$, we always have  \[\max\{\left|V_1^n\right|\}\le q\max\{\left|V_1^{n-1}\right|\},\]
which implies that the series $\sum_{n\geq1}V_1^n$ converges uniformly. It follows that there exists a function $W$ such that
\[W^n\rightarrow W\quad {\rm uniformly~in~} \Omega_1,\quad {\rm as}~n\rightarrow+\infty. \]
Meanwhile, we have $W|_{\zeta=1}=0$,
and satisfies the estimate \eqref{2.86}
by using Corollary \ref{cor}.

By using the inequality \eqref{ineq_de},
and  the uniform boundedness of $W^n$ and its first and second order derivatives, we get the uniform convergence of the first order derivatives of $W^n$ when $n\to +\infty$.

 Next, from the problem \eqref{eq_seq} of $W^n$, we know that for an arbitrary $\epsilon>0$ and when $\zeta<1-\epsilon$, $\pd_\zeta^2 W^n$ also converges uniformly as $n\to +\infty$.
Letting $n\to +\infty$ in \eqref{eq_seq} , it follows that $W$ satisfies the problem \eqref{eq_tr} in $\Omega_1$.

Now, we show the uniqueness of the solution $W$ to the problem \eqref{eq_tr}. Suppose that there are two solutions $W$ and $W'$ to \eqref{eq_tr}. Setting $V=W-W'$, then $V$ satisfies the following problem in $\Omega_1$:
\begin{equation*}\begin{cases}
\partial_t V+\zeta U(\partial_\xi +k\partial_\eta)V+A\pd_\zeta V+BV-W^2\pd_\zeta^2 V\\
\qquad\qquad-\pd^2_\zeta W'(W+W')V=0,\\
\left(W\pd_\zeta V+\pd_\zeta W'~V\right)|_{\zeta=0}=0,
~V|_{\zeta=1}~=~V|_{\Gamma_-}= 0,\\
V|_{t=0}= 0.
\end{cases}\end{equation*}
Consider the function $V_1~\triangleq~V e^{-\alpha_1 t+\beta_1\zeta}$ with $\alpha_1$ and $\beta_1$ being positive constants to be specified later. Then, we have
\begin{equation}\label{2.89}
\begin{cases}
\partial_t V_1+\zeta U(\partial_\xi +k\partial_\eta)V_1+\left(A+2\beta_1W^2\right)\pd_\zeta V_1-W^2\pd_\zeta^2 V_1\\
\qquad+\left[\alpha_1+B-\beta_1A+\beta_1^2W^2-\pd^2_\zeta W'(W+W')\right]V_1=0,\\
\left[W\pd_\zeta V_1+(\pd_\zeta W'-\beta_1W)V_1\right]|_{\zeta=0}=0,
\quad V_1|_{\zeta=1}~=~V_1|_{\Gamma_-}= 0,\\
V_1|_{t=0}= 0.
\end{cases}\end{equation}
If we choose $\alpha_1$ and $\beta_1$ sufficiently large such that
\[\alpha_1+B-\beta_1A+\beta_1^2W^2-\pd^2_\zeta W'(W+W')>0,\quad \pd_\zeta W'-\beta_1W<0,\]
then, for the problem \eqref{2.89},
$|V_1|$ does not attain its positive maximum at the interior and boundary points of $\Omega_1$. Consequently, $V_1~\equiv~0$, which yields  the uniqueness of the solution to the problem \eqref{eq_tr}.
\end{proof}

\subsection{Classical solution of the Prandtl equations}

Now, we return to the original problem \eqref{eqk3} of the Prandtl equations for $(u,w)$.
Assume that $p_x,U,u_0$ and $u_1$ are smooth and satisfy compatibility conditions such that the data in the problem
\eqref{eq_tr} after the Crocco transformation satisfy Assumption \ref{ass_1} given in Section 2.1.

Denote by
\[u_0^i(x,y,z)=\pd_t^iu|_{t=0},\quad w_0^i(x,y,z)=\pd_t^iw|_{t=0},\quad 0\leq i\leq 4,\]
and
\[u_1^i(t,x,y,z)=\pd_n^iu|_{\pd Q_T^-},\quad w_1^j(t,x,y,z)=\pd_n^jw|_{\pd Q_T^-},\quad 0\leq i\leq 4,~0\leq j\leq 3.\]
We now calculate these functions in terms of the initial and boundary data given in \eqref{eqk3}.

From the problem \eqref{eqk3}, obviously we have for $0\leq i\leq 4$,
\[w_0^i(x,y,z)=-\int_0^z\left[\pd_x u_0^i(x,y,\tilde z)+\pd_y(ku_0^i)(x,y,\tilde z)\right]d\tilde z,\]
and then for $0\leq j\leq3$,
\begin{equation}\label{def_u0}\begin{split}
u_0^{j+1}(x,y,z)=-(\pd_t^jp_x)|_{t=0}+\pd_z^2u_0^j-\sum_{k=0}^jC_j^k\left[
u_0^{j-k}\cdot(\pd_x+k\pd_y)u_0^k+w_0^{j-k}\cdot\pd_z u_0^k\right].
\end{split}\end{equation}

Next, from the divergence-free condition given in  \eqref{eqk3}, we have
\begin{equation}\label{w-1-0}
\begin{split}
w_1^0(t,x,y,z)&=-\int_0^z\left[\pd_xu+\pd_y(ku)\right]|_{\pd Q_T^-}d\tilde z\\
&\triangleq -g_0-k_n\int_0^zu_1^1(t,x,y,\tilde z)d\tilde z,
\end{split}\end{equation}
where $k_n=\left(1,k(x,y)\right)\cdot\vec{n}(x,y)$,
and
$$g_0(t,x,y,z)=\int_0^z\left(k_\tau\pd_\tau u_1+\pd_y k|_{\gamma_-}\cdot u_1\right)(t,x,y,\tilde z)d\tilde z,$$
with $k_\tau=\left(1,k(x,y)\right)\cdot\vec{\tau}(x,y)$. 

On the other hand, from
 the first equation of \eqref{eqk3}, we have that on the boundary $\pd Q_T^-$,
\[
k_nu_1\cdot u_1^1+w_1^0\cdot\pd_z u_1=-p_x+\pd_z^2 u_1-\pd_t u_1-k_\tau u_1\cdot\pd_\tau u_1,
\]
which implies that by using \eqref{w-1-0},
\begin{equation}\label{def_u1}
\begin{split}&k_n\left(u_1\cdot u_1^1-\pd_z u_1\cdot\int_0^z u_1^1d\tilde z\right)\\
&=-p_x+\pd_z^2 u_1-\pd_t u_1-k_\tau u_1\cdot\pd_\tau u_1+g_0\cdot\pd_z u_1~\triangleq~-f_1.
\end{split}\end{equation}

From \eqref{def_u1}, it follows that
\begin{equation}\label{def_u1-1}
k_n(u_1)^2\cdot\pd_z\left({\int_0^z~u_1^1~d\tilde z\over u_1}\right)=f_1,\end{equation}
by using that
\[\lim\limits_{z\rightarrow0^+}{\int_0^z~u_1^1~d\tilde z\over u_1}=0, 
\]
as a simple consequence from $\pd_z u_1>0$ and the compatibility conditions of $u_1$, $u_1^1|_{z=0}=0$.
Thus, from \eqref{def_u1-1} we deduce
\begin{equation}\label{u-1}
u_1^1={f_1\over k_n u_1}+\pd_z u_1\cdot\int_0^z{f_1\over k_n\cdot(u_1)^2}d\tilde z.
\end{equation}

In the same way as from \eqref{w-1-0} to \eqref{u-1},
we can compute $w_1^j$ ($1\leq j\leq3$) and $u_1^i$ ($2\leq i\leq4$), and there are smooth
function $g_j$ ($1\leq j\leq3$) and $f_i$ ($2\leq i\leq4$),
\begin{equation}\label{def_ui}
g_j=g_j(u_1^0,\cdots,u_1^{j}),~f_i=f_i\left(u_1,\cdots,u_1^{i-1};w_0^0,\cdots,w_1^{i-2}\right),
\end{equation}
such that
\[w_1^j(t,x,y,z)=-g_j-k_n\int_0^z~u_1^{j+1}(t,x,y,z)~d\tilde z,\]
and
\[u_1^i={f_i\over k_n u_1}+\pd_z u_1\cdot\int_0^z~{f_i\over k_n\cdot(u_1)^2}~d\tilde z.\]

Corresponding to Assumption \ref{ass_1},
we give the following assumption about the compatibility conditions of the problem \eqref{eqk3},

\begin{assumption}\label{ass_com}
Assume that for the problem \eqref{eqk3},
 $$k\in C^{10}(D),\quad (U,p)\in C^{10}\big((0,T]\times D\big),$$
and the initial-boundary data
$$u_0\in C^{15}(D\times\R_z^+),\quad~u_1\in C^{15}(\pd Q_T^-),$$
such that the following properties hold:
\begin{enumerate}
\item
$\lim\limits_{z\to+\infty}
u_0(x,y,z)=U(0,x,y),\quad
\lim\limits_{z\to+\infty}
u_1(t,x,y,z)=U(t,x,y)
\quad {\rm for~all~}(t,x,y)\in (0,T]\times\gamma_-$;

\item $\pd_z u_0>0,\pd_z u_1>0$ for all $z\geq0$, and there is constant $C_0>0$ such that
    \[
\qquad\quad C_0^{-1}\Big(U(0,x,y)-u_0(x,y,z)\Big)~\leq~\pd_z u_0(x,y,z)~\leq~ C_0\Big(U(0,x,y)-u_0(x,y,z)\Big),
\]
and
\[\begin{split}
& C_0^{-1}\Big(U(t,x,y)-u_1(t,x,y,z)\Big)~\leq~ \pd_z u_1(x,y,z)\\
&\qquad\qquad\qquad\qquad\qquad
~\leq~ C_0\Big(U(t,x,y)-u_1(t,x,y,z)\Big)\quad {\rm in}~(0,T]\times\gamma_- ;
\end{split}\]
\item $u_1^i\in C^7(\Gamma_-)~(1\leq i\leq 4)$, and the following compatibility conditions hold:
\[\begin{split}
\qquad\qquad{\bf{(i)}}&~\pd_z^{m+q}\pd_\tau^l\pd_n^ju_0^i|_{\pd Q_T^-}~=~\pd_z^{m+q}\pd_\tau^l\pd_t^iu_1^j|_{t=0},\quad{\rm for}~ i,j\leq4,~{\rm and}\\
&\qquad~m+l+i+j\leq5,~q=0,1;\\
\qquad\qquad{\bf{(ii)}}&~\pd_x^j\pd_y^m u_0^i|_{z=0}=\pd_t^q\pd_\tau^r u_1^s|_{z=0}=0,\quad{\rm for}  ~i+j+m\leq3,~s+q+r\leq3;\\
\qquad\qquad{\bf{(iii)}}&~\lim\limits_{z\rightarrow\infty}\pd_x^j\pd_y^m \pd_z u_0^i=\lim\limits_{z\rightarrow\infty}\pd_t^q\pd_\tau^r\pd_z u_1^s=0,\quad  ~i+j+m\leq3,~s+q+r\leq3.
\end{split}\]
\end{enumerate}
\end{assumption}

\begin{remark}
(1) It is easy to verify that Assumption \ref{ass_com} implies Assumption \ref{ass_1}.

(2) Under Assumption \ref{ass_com},
we know that
$u_1^i$ ($0\leq i\leq4$) are bounded continuous functions on the boundary $\pd Q_T^-$, and then the above computation of $u_1^i$ implies that we should have such boundary condition $u_1$ so that the functions $f_i$ ($1\leq i\leq 4$) defined in \eqref{def_u1} and \eqref{def_ui} satisfy
\begin{equation*}
f_i~=~\mathcal{O}(k_n\cdot z^2),\quad as~z~\rightarrow~0.
\end{equation*}
\end{remark}

Now, we give the following local well-posedness result of  the original
problem \eqref{eqk3}.
\begin{theorem}\label{thm_loc}
Suppose that the data in the problem \eqref{eqk3} satisfies Assumption \ref{ass_com}. Then,
there exists $0<T_0\leq T$ and a unique solution $(u,w)$ to the problem \eqref{eqk3} in the domain
 $Q_{T_0}$, satisfying
\begin{enumerate}
\item
$u>0$ when $z>0$, $\pd_z u>0$ when $z\geq0$,
\item
the derivatives $\pd_t u, \pd_x u,\pd_y u, \pd_z u, \pd_{tz}^2 u,\pd_{xz}^2 u,\pd_{yz}^2 u,\pd_{zz}^2 u$ and $\pd_z w$
are continuous and bounded in $Q_{T_0}$. Moreover, $\pd_z^2 u/ \pd_z u$ and $(\pd_z u\pd_z^3 u-(\pd_z^2 u)^2)/(\pd_z u)^3$ are continuous and bounded in $Q_{T_0}$.
\end{enumerate}
\end{theorem}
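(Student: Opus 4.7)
\medskip

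\noindent\textbf{Proof plan.} The plan is to pull back the Crocco-side solution given by Theorem \ref{thm_tr} through the inverse of the transformation \eqref{crocco}, and to read off the regularity and positivity claims from the bounds on $W$. Since Assumption \ref{ass_com} implies Assumption \ref{ass_1}, Theorem \ref{thm_tr} furnishes a unique $W\in C(\overline{\Omega}_1)$, positive in $\Omega_1$, with bounded first derivatives and bounded $\pd_\zeta^2 W$, and with the two-sided bound $M_1^{-1}(1-\zeta)\le W\le M_1(1-\zeta)$.

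\medskip

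\noindent\textbf{Inverting the Crocco map.} For each fixed $(t,x,y)\in(0,T_0]\times D$, I define
\[
z(t,x,y,\zeta)~\triangleq~\int_0^\zeta\frac{d\zeta'}{W(t,x,y,\zeta')},\qquad 0\le \zeta<1.
\]
The upper bound $W\le M_1(1-\zeta)$ forces the integral to diverge as $\zeta\to 1^-$, so $\zeta\mapsto z$ is a smooth, strictly increasing bijection from $[0,1)$ onto $[0,+\infty)$, with $\pd_\zeta z=1/W>0$. Let $\zeta=\zeta(t,x,y,z)$ denote the inverse, and set
\[
u(t,x,y,z)~\triangleq~U(t,x,y)\,\zeta(t,x,y,z),\qquad
w(t,x,y,z)~\triangleq~-\int_0^z\big[\pd_x u+\pd_y(ku)\big](t,x,y,\tilde z)\,d\tilde z.
\]
By construction $u|_{z=0}=0$, $w|_{z=0}=0$, and $u\to U(t,x,y)$ as $z\to+\infty$ (from $\zeta\to 1$), while $u|_{t=0}=u_0$, $u|_{\pd Q_{T_0}^-}=u_1$ follow from the corresponding conditions on $W_0,W_1$ together with $u=\zeta U$. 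Positivity $u>0$ when $z>0$ and $\pd_z u=UW>0$ are immediate.

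\medskip

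\noindent\textbf{Transferring regularity and verifying the PDE.} The key identity is $\pd_z=W\pd_\zeta$ under the inverse map. Applying this and the definition $u=\zeta U$ gives the explicit formulas
\[
\pd_z u=UW,\qquad \pd_z^2 u=UW\pd_\zeta W,\qquad \pd_z^3 u=UW\big[(\pd_\zeta W)^2+W\pd_\zeta^2 W\big],
\]
from which one reads off
\[
\frac{\pd_z^2 u}{\pd_z u}=\pd_\zeta W,\qquad
\frac{\pd_z u\,\pd_z^3 u-(\pd_z^2 u)^2}{(\pd_z u)^3}=\frac{\pd_\zeta^2 W}{U}.
\]
Both are continuous and bounded on $Q_{T_0}$ by Theorem \ref{thm_tr}, which is precisely the second regularity claim. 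The continuity and boundedness of $\pd_t u,\pd_x u,\pd_y u,\pd_z u$ and the mixed derivatives $\pd_{tz}^2u,\pd_{xz}^2u,\pd_{yz}^2u,\pd_{zz}^2u$ follow from the uniform bounds on $W,\pd_t W,\pd_\xi W,\pd_\eta W,\pd_\zeta W,\pd_\zeta^2 W$ together with the chain rule (noting that $\pd_t$ at fixed $z$ produces an extra term $\pd_\zeta W\cdot(\pd_t\zeta)$, which is controlled by the same bounds). Boundedness of $\pd_z w=-\pd_x u-\pd_y(ku)$ is then automatic. To check that $(u,w)$ satisfies the first equation of \eqref{eqk3}, I differentiate the equation $L(W)=0$ in \eqref{eq_tr} after substituting $\zeta=u/U$; equivalently, I rewrite $L(W)=0$ in the original variables using $\pd_z=W\pd_\zeta$ and verify that it coincides term by term with
\[
\pd_t u+(u\pd_x+ku\pd_y+w\pd_z)u-\pd_z^2 u+\pd_x p=0,
\]
the boundary conditions $W\pd_\zeta W|_{\zeta=0}=p_x/U$ producing exactly the compatibility $-\pd_x p=\pd_z^2 u|_{z=0}$ in the limit $z\to 0$. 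The divergence-free equation is built into the definition of $w$.

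\medskip

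\noindent\textbf{Uniqueness and the main obstacle.} For uniqueness, any other classical solution $(u',w')$ of \eqref{eqk3} on $Q_{T_0}$ with $\pd_z u'>0$ admits the forward Crocco change of variables, producing a $W'$ that solves \eqref{eq_tr} with the same data; the uniqueness part of Theorem \ref{thm_tr} gives $W'=W$, hence $u'=u$, and then $w'=w$ by the divergence-free equation. The main technical obstacle I anticipate is verifying uniform control of the derivatives of $u$ and $w$ all the way up to $z=+\infty$: since the Crocco image of $z=\infty$ is the degenerate boundary $\zeta=1$ where $W$ vanishes linearly, the change of variables $\pd_z=W\pd_\zeta$ introduces a degeneracy that must be absorbed by the decay of $\pd_\zeta^k W$. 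The two-sided bound $M_1^{-1}(1-\zeta)\le W\le M_1(1-\zeta)$, combined with the compatibility conditions at $\zeta=1$ in Assumption \ref{ass_1}(2)(ii)-(iii), is exactly what compensates the singular factor and yields boundedness of the required derivatives in $Q_{T_0}$.
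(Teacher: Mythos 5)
Your construction mirrors the paper's proof: you invert the Crocco transformation exactly as in \eqref{tr_inverse}, the identities $\pd_z u=UW$, $\pd_z^2u/\pd_zu=W_\zeta$ and $\big(\pd_zu\,\pd_z^3u-(\pd_z^2u)^2\big)/(\pd_zu)^3=W_{\zeta\zeta}/U$ are correct and are precisely \eqref{u_z}--\eqref{tr_1}, and your uniqueness argument (forward Crocco transform of a second monotone solution plus the uniqueness part of Theorem \ref{thm_tr}) is the paper's. The one real divergence is the definition of $w$: the paper defines $w$ from the momentum equation, as in \eqref{w_def}, so the first equation of \eqref{eqk3} holds by construction, and then verifies the divergence relation by differentiating \eqref{w_def} in $z$, substituting the $W$-equation and Bernoulli's law to get $W\big[\pd_zw+\pd_xu+\pd_y(ku)\big]=0$; you define $w$ by integrating the divergence relation and must instead verify the momentum equation.

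That verification is where your argument has a gap as written. The equation $L(W)=0$ of \eqref{eq_tr}, transported back to the original variables, does \emph{not} coincide term by term with the momentum equation: in deriving \eqref{eq_tr} the normal velocity has been eliminated, and what $L(W)=0$ encodes is, modulo the divergence relation and Bernoulli's law, the statement $\pd_z\big(R/\pd_zu\big)=0$ for the momentum residual $R\triangleq\pd_tu+(u\pd_x+ku\pd_y+w\pd_z)u-\pd_z^2u+p_x$ (an equation at the level of $u_z$, involving $u_{zzz}$), not $R=0$ itself. With your $w$ (which does satisfy the divergence relation and $w|_{z=0}=0$), you therefore still need to integrate in $z$: the Crocco boundary condition $WW_\zeta|_{\zeta=0}=p_x/U$ gives $R|_{z=0}=-\pd_z^2u|_{z=0}+p_x=0$, and since $W|_{\zeta=0}>0$ the constancy of $R/\pd_zu$ then yields $R\equiv0$. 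You gesture at the $z=0$ compatibility but never carry out this step, and the ``coincides term by term'' claim is false as stated. The quickest repairs are either this one-line integration argument, or the paper's ordering: define $w$ by \eqref{w_def}, verify the divergence relation via the $W$-equation and Bernoulli's law, and observe that this $w$ vanishes at $z=0$ and hence agrees with your integral formula, so the same pair $(u,w)$ satisfies both equations.
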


\begin{proof}[\bf{Proof.}] The proof is divided in two steps.

\underline{Step 1.}
 Let $T_0=t_1$ and $W$ be the solution to the problem \eqref{eq_tr}, where $t_1$ and $W$ are obtained in Theorem \ref{thm_tr}.
Define $u(t,x,y,z)$ by using the relation
\begin{equation}\label{tr_inverse}z=\int_0^{u/U}{ds\over W(t,x,y,s)}.\end{equation}
By using the continuity of $W$ in $\overline{\Omega}$ and $W(t,x,y,s)>0$ for $0\leq s<1$, $W=0$ at $s=1$, we obtain that $u(t,x,y,z)/U(t,x,y)$ is continuous in $\overline{Q_{T_0}}$,
\[u|_{z=0}=0,\quad \lim\limits_{z\rightarrow+\infty}u=U(t,x,y),\]
and
\(0<u(t,x,y,z)<U(t,x,y)\) as \(0<z<+\infty.\)
From \eqref{tr_inverse},
we have $\pd_z u/U= W(t,x,y,u/U)$, and then
 the conditions 
$$u|_{t=0}=u_0(x,y,z), \qquad u|_{\pd Q_{T_0}^-}=u_1(t,x,y,z)|_{\pd Q_{T_0}^-}$$
follow from $W_0=\frac{\pd_z u_0}{U}$ and $W_1=\frac{\pd_z u_1}{U}$, respectively.

The first equation given in the Prandtl equations \eqref{eqk3} leads to define
\begin{equation}\label{w_def}w={-\pd_t u-u(\pd_x+k\pd_y)u+\pd_z^2 u-p_x\over\pd_z u}.\end{equation}
Since $u|_{z=0}=0$, from
\eqref{w_def} it follows that
\[w|_{z=0}=\left({\pd_z^2 u-p_x\over \pd_z u}\right)\Big|_{z=0}=\left({UWW_\zeta-p_x\over UW}\right)\Big|_{\zeta=0}=0,\]
by using that
\begin{equation}\label{u_z}
\pd_z u=UW, \quad \pd_z^2 u=W_\zeta \pd_z u=UWW_\zeta,
\end{equation}
and the boundary condition $WW_\zeta|_{\zeta=0}={p_x\over U}$ given in \eqref{eq_tr}.

Moreover, from \eqref{u_z} we get that
\begin{equation}\label{tr_1}\begin{split}
&\pd_z^3 u= W_{\zeta\zeta}\frac{(\pd_z u)^2}{U}+W_\zeta \pd_z^2 u,\quad\pd_{zt}^2 u= U_tW+U\big(W_t+W_\zeta\pd_t ({u\over U})\big),\\
&\pd_{zx}^2 u=U_xW+U\big(W_\xi+W_\zeta\pd_x ({u\over U})\big),\quad\pd_{zy}^2 u=U_yW+U\big(W_\eta+W_\zeta\pd_y ({u\over U})\big),\\
 &\pd_t u=u{U_t\over U}+UW\int_0^{u/U}{W_t\over W^2}ds,\quad\pd_x u=u{U_x\over U}+UW\int_0^{u/U}{W_\xi\over W^2}ds,\\
 &\pd_y u=u{U_y\over U}+UW\int_0^{u/U}{W_\eta\over W^2}ds.
 \end{split}\end{equation}
So,
from the properties of $W$ given in Theorem \ref{thm_tr} and the above definition \eqref{tr_inverse} of $u$,  as in \cite{OA1} it is not difficult to obtain the continuity and boundedness of $u$ and its derivatives as stated in the theorem.

\underline{Step 2.}
We will show that $(u,w)$ given by \eqref{tr_inverse} and \eqref{w_def} satisfies the problem \eqref{eqk3}. The first equation in \eqref{eqk3} holds trivially.

To verify that $(u,w)$ satisfies the second equation in \eqref{eqk3}, by differentiating \eqref{w_def} with respect to $z$, it yields
\begin{equation}\label{w_z}\begin{split}
&\pd_z u~\pd_z w+\frac{\pd_z^2 u}{\pd_z u}\left[-\pd_t u-u(\pd_x+k\pd_y)u+\pd_z^2 u-p_x\right]\\
&=-\pd_{tz}^2u-u(\pd_x+k\pd_y)\pd_z u-\pd_zu~(\pd_x+k\pd_y)u+\pd_z^3 u.
\end{split}\end{equation}
Then, substituting \eqref{u_z} and \eqref{tr_1} into \eqref{w_z} yields that
\begin{equation*}\begin{split}
&UW\big[\pd_z w+(\pd_x+k\pd_y)u\big]+W_\zeta\left[-\pd_t u-u(\pd_x+k\pd_y)u+\pd_z^2 u-p_x\right]\\
&=-U_tW-U\big(W_t+W_\zeta\pd_t ({u\over U})\big)-u\big[U_xW+U\big(W_\xi+W_\zeta\pd_x ({u\over U})\big)\big]\\
&\quad-ku\big[U_yW+U\big(W_\eta+W_\zeta\pd_y ({u\over U})\big)\big]+W_{\zeta\zeta}\frac{(\pd_z u)^2}{U}+W_\zeta \pd_z^2 u,
\end{split}\end{equation*}
that is,
\begin{equation}\label{w-z}\begin{split}
&W\big[\pd_z w+(\pd_x+k\pd_y)u\big]-\frac{p_x}{U}W_\zeta=\frac{- U_t-u U_x-kuU_y}{U}W\\
&\qquad-W_t-uW_\xi-kuW_\eta+\frac{uU_t+u^2U_x+ku^2U_y}{U^2}W_\zeta
+W^2W_{\zeta\zeta}.
\end{split}\end{equation}
Combining \eqref{w-z} with the equation of \eqref{eq_tr} for $W(\tau,\xi,\eta,u/U)$, it follows that
\begin{equation}\label{div}W\big[\pd_z w+(\pd_x+k\pd_y)u\big]=-\pd_y k~uW+\frac{u^2}{U^3}\big(U_t+UU_x+kUU_y+p_x\big).\end{equation}
Thus, from \eqref{div} and the Bernoulli law:
\(U_t+UU_x+kUU_y+p_x=0,\)
it follows that $W \big(\pd_z w+\pd_x u+\pd_y(ku)\big)=0$, which implies that by virtue of $W>0$,
\[\pd_x u+\pd_y(ku)+\pd_z w=0.\]
So, we obtain that $(u,w)$ satisfies the problem \eqref{eqk3}.

Uniqueness of the solution to the problem \eqref{eqk3} follows from the uniqueness of the solution to the problem \eqref{eq_tr} given in Theorem \ref{thm_tr}. Hence, we complete the proof of this theorem.
\end{proof}

\begin{remark}
As discussed in Section 1, from Theorem \ref{thm_loc} we immediately deduce the existence and uniqueness of the classical solution to the problem \eqref{pdtl-1} as claimed in Theorem \ref{1.1}.
\end{remark}


\section{Linear stability with a general perturbation}

 In this section, we will study the
stability of the classical solution with the special structure constructed in Section 2 for the problem \eqref{pdtl-1}
with respect to any three dimensional perturbation.

That is, let
\[\Big(u^s(t,
z,y,z), k(x,y)u^s(t,x,y,z), w^s(t,x,y,z)\Big)\]
be a classical solution to the problem \eqref{pdtl-1},
consider the following linearized problem of \eqref{pdtl-1} around
this solution profile in $Q_T=(0, T]\times Q$ with
$Q=D\times \R^+_z$:
\begin{equation}\label{leqp}
\begin{cases}
  \partial_t u+(u^s\partial_x+ku^s\partial_y+w^s\partial_z)u+
  (u\partial_x+v\partial_y+w\partial_z)u^s-\partial_z^2u=f_1,\\
\partial_t v+(u^s\partial_x+ku^s\partial_y+w^s\partial_z)v+
  (u\partial_x+v\partial_y+w\partial_z)(ku^s)-\partial_z^2v=f_2,\\
\partial_x u+\partial_y v+\partial_zw=0,\\
(u,v,w)|_{z=0}=0, \quad \lim\limits_{z\rightarrow +\infty}(u,v)=0,\quad
(u,v)|_{\pd Q_T^-}=(u_1,v_1)(t,x,y,z)|_{\pd Q_T^-},\\
(u,v)|_{t=0}=(u_0,v_0)(x,y,z),
\end{cases}\end{equation}
where $\pd Q_T^-=(0,T]\times \gamma_-\times \R^+_z$ with $\gamma_-$ being defined in  \eqref{pdtl-1}.

We will apply the energy method introduced
in \cite{AWXY} for the  two dimensional Prandtl equations and use the special structure of the problem \eqref{leqp} in the three dimensional setting. For this,  we firstly recall some weighted norms introduced in \cite{AWXY}. For any function $f(t,x,y,z)$ defined in $Q_T$, real numbers $\lambda,l>0$ and $j, j_1, j_2\in \N$, define the spaces
$L^2_{\lambda, l}(Q)$, $\cB_\lam^{j_1, j_2}(Q_T)$,  $\tilde \cB_\lam^{j_1, j_2}(Q_T)$, $\cA_l^j(Q_T)$ and $\cD_l^j(Q_T)$ with the corresponding norms,
$$\|f\|_{\lambda, l}=\left(\int_{Q} e^{-2\lambda
t}\langle z\rangle^{2l}|f|^2 dxdydz\right)^{\frac{1}{2}},
\quad \langle z\rangle=(1+z^2)^{\frac{1}{2}},$$
$$
\|f\|_{\cB_\lam^{j_1, j_2}}=\left(\sum\limits_{0\le m\le j_1, 0\le
q\le j_2} \|e^{-\lambda t}\wt^l\pd_\cT^m\pd_z^q f\|^2_{L^2(Q_T)}\right)^{\frac{1}{2}},
$$
$$
\|f\|_{\tilde \cB_\lam^{j_1, j_2}}=\left(\sum\limits_{0\le m\le j_1,
0\le q\le j_2} \|e^{-\lambda t}\wt^l\pd_\cT^m\pd_z^q
f\|^2_{L^\infty(0, T; L^2(Q))}\right)^{\frac{1}{2}},$$
$$\|f\|_{\cA_l^j(Q_T)}=\left(\sum\limits_{j_1+\left[\frac{j_2+1}{2}\right]\le
j} \|\wt^l\pd_\cT^{j_1}\pd_z^{j_2} f\|^2_{L^2(
Q_T)}\right)^{\frac{1}{2}},
$$
and
\[\|f\|_{\cD_l^j}=\sum\limits_{j_1+[{j_2+1\over2}]\leq j}\|\wt^l\pd_\cT^{j_1}\pd_z^{j_2} f\|_{L_z^\infty(L^2_{t,x,y})},\]
with
$$\pd_\cT^j=\sum\limits_{|\beta|\le
j}\pd_t^{\beta_1}\pd_x^{\beta_2}\pd_y^{\beta_3},\quad |\beta|=\beta_1+\beta_2+\beta_3,$$
being the tangential derivatives along with the physical boundary $\{z=0\}$.

Also, we use the following notations:
\begin{equation}\label{def-linsta}
\eta=\frac{\partial_z^2 u^s}{\partial_z u^s}, \quad
\zeta=\frac{(\partial_t+u^s\partial_x+k
u^s\partial_y+w^s\partial_z-\partial_z^2)\partial_z
u^s}{\partial_z u^s},\quad \tilde
f=\frac{f_1}{\partial_z u^s}.
\end{equation}
To study the problem \eqref{leqp}, let us first impose the following assumption.

\begin{assumption}\label{ass-linsta}
For fixed integer $j\geq4$ and real number $l>{1\over2}$, assume that
\begin{enumerate}
\item the background state $(u^s,ku^s,w^s)$ satisfying
$\pd_z u^s>0, u^s, w^s\in\cD_l^j(Q_T)$ and
functions $k\in C^{j+1}(D)$, such that functions $\eta,\zeta$ given in \eqref{def-linsta} satisfy  $\eta\in\cD_0^{j}(Q_T)$ and $\zeta\in\cA_l^{j}(Q_T)$;
\item $(f_1,f_2)\in\cA_l^j(Q_T)$, and $(\frac{f_1}{\pd_z u^s},\frac{f_2}{\pd_z u^s})\in\cA_l^{j-1}(Q_T)$;
\item the initial boundary data satisfy $(u_0,v_0)\in H^{2j}(Q),~(u_1,v_1)\in H^{2j}(\pd Q_T^-)$, and the compatibility conditions of \eqref{leqp}
 up to the  $(j-1)$-th order. Moreover,  the following estimates hold:
\begin{equation}\label{est-ass1}
\begin{split}
&\sum_{i+j_1+j_2\leq j}\Big(\|\wt^l\pd_x^{j_1}\pd_y^{j_2} u_0^i\|_{L^2(Q)}+\|\wt^l\pd_x^{j_1}\pd_y^{j_2} v_0^i\|_{L^2(Q)}\\
&\qquad\qquad+\|\wt^l\pd_t^{j_1}\pd_\tau^{j_2} u_1^i\|_{L^2(\pd  Q_T^-)}+\|\wt^l\pd_t^{j_1}\pd_\tau^{j_2} v_1^i\|_{L^2(\pd Q_T^-)}\Big)\leq {\mathcal M}_0,
\end{split}\end{equation}
\begin{equation}\label{est-ass2}
\begin{split}
&\sum_{i+j_1+j_2\leq j}\Big(\|\wt^l{\pd_x^{j_1}\pd_y^{j_2} u_0^i\over\pd_z u^s(0,\cdot)}\|_{L^2(Q)}+\|\wt^l{\pd_t^{j_1}\pd_\tau^{j_2} u_1^i\over\pd_z u^s}\|_{L^2(\pd  Q_T^-)}\\
&\qquad\qquad+\|\wt^l{\pd_x^{j_1}\pd_y^{j_2} v_0^i\over\pd_z u^s(0,\cdot)}\|_{L^2(Q)}+\|\wt^l{\pd_t^{j_1}\pd_\tau^{j_2} v_1^i\over\pd_z u^s}\|_{L^2(\pd  Q_T^-)}\Big)\leq {\mathcal M}_1,
\end{split}\end{equation}
for two positive constants  ${\mathcal M}_0$ and ${\mathcal M}_1$,
with
\[u_0^i(x,y,z)=\pd_t^iu|_{t=0},\quad u_1^i(t,x,y,z)=\pd_n^iu|_{\pd Q_T^-},\]
\[v_0^i(x,y,z)=\pd_t^iv|_{t=0},\quad v_1^i(t,x,y,z)=\pd_n^iv|_{\pd Q_T^-},\]
for $i\le j$. Here,  $\pd_n=\vec{n}\cdot\nabla_{(x,y)}$ and $\pd_\tau=\vec{\tau}\cdot\nabla_{(x,y)}$.
\end{enumerate}
\end{assumption}

The following result shows that the classical solution to the nonlinear Prandtl equations \eqref{pdtl-1} obtained in Section 2 is linearly stable with respect to any three-dimensional perturbation of initial and boundary data without the special structural constraint.

\begin{theorem}\label{thm-linsta}
 Under Assumption \ref{ass-linsta}, the problem \eqref{leqp}
 has a unique solution $(u,v,w)$ satisfying $(u,v)\in \cA_l^{j-1}(Q_T)$ and $w\in\cD_0^{j-2}(Q_T)$, moreover, it is stable with respect to the initial-boundary data and the source terms in the sense that the following estimate holds, for a constant $C$ depending on the bounds of $k$ and the background state,
 \begin{equation}\label{est_or}\begin{split}
 &\|u\|_{\cA_l^{j-1}}+\|v\|_{\cA_l^{j-1}}+\|w\|_{\cD_0^{j-2}}\\
 &\leq C\Big(\mathcal{M}_0+\mathcal{M}_1+\|\frac{f_1}{\pd_z u^s}\|_{\cA_l^{j-1}}+\|\frac{f_2}{\pd_z u^s}\|_{\cA_l^{j-1}}+\|f_1\|_{\cA_l^j}+\|f_2\|_{\cA_l^{j}}\Big),
 \end{split}\end{equation}
  where $\mathcal{M}_0$ and $\mathcal{M}_1$ are bounds of initial-boundary data given in \eqref{est-ass1} and \eqref{est-ass2}.
\end{theorem}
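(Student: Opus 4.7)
The strategy is to exploit the special structure of the background $(u^s, ku^s, w^s)$ to decouple the system \eqref{leqp} into (i) a scalar transport–diffusion equation for the combination $W=v-ku$ and (ii) a reduced scalar problem for $u$ which is formally analogous to the two-dimensional linearized Prandtl system, and then to apply the weighted energy method of \cite{AWXY} to each piece.

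The key algebraic observation is that subtracting $k$ times the first equation of \eqref{leqp} from the second, using that $k=k(x,y)$ is $z$- and $t$-independent and the Burgers identity $\pd_x k+k\pd_y k=0$ from hypothesis $(H1)$, yields the clean equation
\[
\pd_t W+(u^s\pd_x+ku^s\pd_y+w^s\pd_z)W+(u^s\pd_y k)W-\pd_z^2 W=f_2-kf_1,
\]
with boundary and initial data inherited from $v-ku$. This equation does not couple back to $u$, so weighted $L^2$ energy estimates applied to $\pd_\cT^\alpha W$ with $|\alpha|\leq j-1$ and weight $\wt^{2l}e^{-2\lambda t}$ close immediately once $\lambda$ is chosen large in terms of the bounds for $(u^s,ku^s,w^s,\pd_y k)$, giving
\[
\|W\|_{\cA_l^{j-1}}\les \mathcal{M}_0+\mathcal{M}_1+\|f_1\|_{\cA_l^j}+\|f_2\|_{\cA_l^j}.
\]
Substituting $v=W+ku$ in the first line of \eqref{leqp} and using $\pd_z w=-(\pd_x+k\pd_y)u-u\pd_y k-\pd_y W$ from the divergence-free relation, the reduced equation for $u$ reads
\[
\pd_t u+(u^s\pd_x+ku^s\pd_y+w^s\pd_z)u+u(\pd_x+k\pd_y)u^s+w\pd_z u^s-\pd_z^2 u=f_1-W\pd_y u^s,
\]
in which $W$ and $\pd_y W$ are now known source terms.

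For the reduced $u$-equation I would follow \cite{AWXY} and introduce the auxiliary unknown
\[
\tu=\pd_z u-\eta\,u,\qquad \eta=\frac{\pd_z^2 u^s}{\pd_z u^s}.
\]
Differentiating the reduced equation in $z$ and subtracting $\eta$ times itself, the identity $\eta\pd_z u^s=\pd_z^2 u^s$ kills the $w\pd_z^2 u^s$ contribution, and the surviving combination
\[
\pd_z u^s\cdot(\pd_x+k\pd_y)u+\pd_z w\cdot\pd_z u^s=-\pd_z u^s\,(u\pd_y k+\pd_y W)
\]
no longer contains a tangential derivative of $u$; that is, the loss-of-one-tangential-derivative term, which is the only obstruction in the Prandtl system, disappears from the equation for $\tu$. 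The remaining source for $\tu$ consists of bounded coefficients (built from $u^s,w^s,\eta,\zeta,k,\pd_y k$) times $u,\tu,W$ and the already controlled $\pd_y W$. A weighted $L^2$ energy estimate of $\pd_\cT^\alpha\tu$ for $|\alpha|\leq j-1$ with weight $\wt^{2l}e^{-2\lambda t}$, together with a weighted Hardy inequality to bound $w$ through $(\pd_x+k\pd_y)u$ and $\pd_y W$, treatment of the incoming boundary terms on $\pd Q_T^-$ by the trace assumptions \eqref{est-ass1}--\eqref{est-ass2}, and absorption of the commutators $[\pd_\cT^\alpha,\eta]$ and $[\pd_\cT^\alpha,\zeta]$ into the $\cA_l^j$/$\cD_l^j$ budget from Assumption~\ref{ass-linsta}(1), closes a bound on $\|\tu\|_{\cB_\lam^{j-1,0}}$ by the right-hand side of \eqref{est_or}. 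Inverting the ODE $\pd_z u=\eta u+\tu$ with $u|_{z=0}=0$ recovers $u$ in $\cA_l^{j-1}$, then $v=W+ku$ lies in the same space, and finally $w=-\int_0^z(\pd_x u+\pd_y v)\,d\tilde z$ is controlled in $\cD_0^{j-2}$ by Hardy, which yields \eqref{est_or}.

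Existence is then built by regularization: add a horizontal viscosity $\epsilon(\pd_x^2+\pd_y^2)$ in both the $W$- and $u$-equations, solve the resulting linear, non-degenerate parabolic systems by Galerkin/semigroup methods, derive the estimate above uniformly in $\epsilon$ (the algebraic cancellation via $W$ and $\tu$ is stable under this perturbation), and pass to the limit $\epsilon\to 0$. Uniqueness follows from the same estimate applied to the difference of two solutions, which satisfies the homogeneous problem. I expect the main obstacle to be organizing the cancellation cleanly at each tangential order $|\alpha|\leq j-1$ while the coefficients $\eta$ and $\zeta$ enjoy only the finite $\cD$- and $\cA$-regularity supplied by Assumption~\ref{ass-linsta}(1), so that the commutators $[\pd_\cT^\alpha,\eta]$ and $[\pd_\cT^\alpha,\zeta]$ must be budgeted carefully by Gagliardo--Nirenberg-type inequalities in the weighted spaces; a secondary but unavoidable nuisance will be maintaining the corner compatibility at $\pd Q_T^-\cap\{z=0\}$ and the decay as $z\to\infty$ under the weights $\wt^l$, which must be propagated from the compatibility conditions in Assumption~\ref{ass-linsta}(3).
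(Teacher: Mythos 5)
Your strategy is in essence the paper's: the paper also first decouples the transverse component by setting $\tv=ku-v$ (your $W=v-ku$ up to sign), proves a closed weighted estimate for it (Lemma \ref{lem-linsta1}), and then treats the reduced problem \eqref{equw} for $u$ with the AWXY-type good unknown. Your $\tu=\pd_z u-\eta u$ equals $\pd_z u^s\cdot h$ with $h=\pd_z(u/\pd_z u^s)$, so the cancellation of the loss-of-derivative term $w\pd_z u^s$ is exactly the one exploited in \eqref{eqh}; the construction of $w$ by integrating the divergence relation and the uniqueness argument are also the same.

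There are, however, two concrete gaps. First, and more seriously, you estimate $\tu$ rather than $h=\tu/\pd_z u^s$, and then assert that ``inverting the ODE $\pd_z u=\eta u+\tu$ recovers $u$ in $\cA_l^{j-1}$.'' This inversion reads $u=\pd_z u^s\int_0^z \tu/\pd_z u^s\,d\tilde z$, and since $\pd_z u^s\to 0$ as $z\to\infty$ (Assumption \ref{ass-linsta} gives only $\pd_z u^s>0$ and upper bounds, no quantitative lower bound such as $\pd_z u^s\gtrsim\wt^{-\sigma}$), a weighted $L^2$ bound on $\tu$ does not control $u$, nor the lower-order terms in $u$ (not $\tu$) that survive in your $\tu$-equation. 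The paper avoids this precisely by working with $h$ itself: its equation \eqref{eqh} only involves $h$ and $\int_0^z h\,ds=u/\pd_z u^s$, and the recovery $u=\pd_z u^s\int_0^z h\,d\tilde z$ needs only the upper bound on $\pd_z u^s$. The price is that the data and sources must be divided by $\pd_z u^s$ up front, which is exactly why the paper needs the second estimate \eqref{est-linsta1-1} for $\tv/\pd_z u^s$ and why $\mathcal{M}_1$, $\|f_i/\pd_z u^s\|_{\cA_l^{j-1}}$ appear in \eqref{est_or}; your scheme as written never uses these quantities in the $\tu$-step, which is a sign that the closure cannot work at that level without the division. Second, a bookkeeping slip: you claim the estimate for $W$ only at tangential order $j-1$, but $\pd_y W$ enters both the divergence relation and the source of the reduced problem at order $j-1$, so $W$ must be controlled at order $j$; this is what the paper's Lemma \ref{lem-linsta1} provides ($\|\tv\|_{\cA_l^j}\les \mathcal{M}_0+\|kf_1-f_2\|_{\cA_l^j}$), and it is the reason $\|f_i\|_{\cA_l^{j}}$, rather than only order $j-1$ norms, appear in \eqref{est_or}. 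Both issues are repairable by replacing $\tu$ with $h$ and raising the $W$-estimate to order $j$, which lands you exactly on the paper's proof.
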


To prove this theorem, by using the special structure of the problem \eqref{leqp}, we first introduce a new unknown function
\begin{equation}\label{def_tv}
\tilde v(t,x,y,z)=k(x,y)u(t,x,y,z)-v(t,x,y,z).
\end{equation}
By the relation $k_x+kk_y=0$, from \eqref{leqp} we know that $\tilde v(t,x,y,z)$ satisfies the following problem 
\begin{equation}\label{eq_tv}
\begin{cases}
\partial_t\tilde v+(u^s\partial_x+ku^s\partial_y+w^s\partial_z)\tilde
v+k_yu^s\tilde v-\partial_z^2\tilde v=kf_1-f_2,\quad in~ Q_T,\\
\tilde v|_{z=0}=0,\quad \lim\limits_{z\rightarrow+\infty}\tilde v=0,\quad
\tilde v|_{\pd Q_T^-}=(ku_1-v_1)(t,x,y,z),\\
\tilde v|_{t=0}=(ku_0-v_0)(x,y,z).
\end{cases}\end{equation}
And for the problem \eqref{eq_tv}, we have

\begin{lemma}\label{lem-linsta1}
Under Assumption \ref{ass-linsta}, the problem \eqref{eq_tv} has a unique smooth solution $\tilde{v}(t,x,y,z)$,
and there is a constant $C>0$ such that
\begin{equation}\label{est-linsta1}
\|\tv\|_{\cA_l^j}\leq C\Big({\mathcal M}_0+\|kf_1-f_2\|_{\cA_l^j}\Big),
\end{equation}
and
\begin{equation}\label{est-linsta1-1}
\|{\tv\over\pd_z u^s}\|_{\cA_l^{j-1}}\leq C\Big({\mathcal M}_1+\|{kf_1-f_2\over\pd_z u^s}\|_{\cA_l^{j-1}}\Big).
\end{equation}
\end{lemma}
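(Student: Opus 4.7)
The strategy I would pursue rests on a key structural observation: the substitution $\tv = ku-v$ turns \eqref{eq_tv} into a \emph{standard} linear convection--diffusion equation, because the only potentially dangerous first-order tangential coupling present in \eqref{leqp} (the term $v\pd_y u^s$ that would normally cost one tangential derivative) has been absorbed into the bounded zeroth-order coefficient $k_y u^s\,\tv$. Consequently no loss of derivative arises, and both the existence and the $\cA_l^j$ estimate reduce to a weighted energy argument. For existence and uniqueness I would set up a standard linear parabolic approximation scheme—tangential Galerkin truncation combined with a vanishing tangential viscosity—and pass to the limit using the uniform estimate \eqref{est-linsta1} proved below; uniqueness follows from the same estimate applied to the difference of two solutions with zero data.

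For \eqref{est-linsta1}, the basic step is to multiply \eqref{eq_tv} by $e^{-2\lambda t}\wt^{2l}\tv$ and integrate over $Q_T$. Integration by parts in $z$ produces the diffusive term $\int\wt^{2l}|\pd_z\tv|^2$, while integration by parts in $(x,y)$ produces the lateral boundary integral $\tfrac12\int_{\pd D} u^sk_n\,\wt^{2l}\tv^2$, which splits into a nonnegative outflow contribution on $\gamma_+$ (kept on the left) and an inflow contribution on $\gamma_-$ that is moved to the right and bounded, via $\tv|_{\pd Q_T^-}=ku_1-v_1$, by ${\mathcal M}_0^2$. Choosing $\lambda$ large and applying Gr\"onwall's inequality yields the $L^\infty(0,T;L^2(\wt^{2l}))$ bound together with the $L^2_t(H^1_z)$ bound on $\tv$. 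For the full $\cA_l^j$ estimate I would apply $\pd_\cT^{j_1}\pd_z^{j_2}$ with $j_1+[(j_2+1)/2]\le j$ to \eqref{eq_tv} and iterate the same weighted identity. The anisotropic counting in $\cA_l^j$ is precisely what lets every commutator $[\pd_\cT^{j_1}\pd_z^{j_2},\,u^s\pd_x+ku^s\pd_y+w^s\pd_z]$ either be dominated by lower-order contributions to $\|\tv\|_{\cA_l^j}$ or be absorbed by the diffusion at the top order, using $u^s,w^s\in\cD_l^j$ and $k\in C^{j+1}(D)$; derivatives of $\tv$ on $\pd Q_T^-$ are expressed from the data and from \eqref{eq_tv} via the tangential/normal splitting $\pd_\xi+k\pd_\eta=k_\tau\pd_\tau+k_n\pd_n$ of \S 2.1, paralleling \eqref{def_w1}--\eqref{def_w1-1}.

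For \eqref{est-linsta1-1} I would set $g=\tv/\pd_z u^s$. Using $\pd_z^2 u^s=\eta\,\pd_z u^s$ and the definition \eqref{def-linsta} of $\zeta$, a direct computation shows that $g$ satisfies
\begin{equation*}
\pd_t g+(u^s\pd_x+ku^s\pd_y+w^s\pd_z)g-\pd_z^2 g-2\eta\,\pd_z g+(\zeta+k_y u^s)\,g=\frac{kf_1-f_2}{\pd_z u^s},
\end{equation*}
with $g|_{z=0}=0$, $g|_{t=0}=(ku_0-v_0)/\pd_z u^s(0,\cdot)$ and $g|_{\pd Q_T^-}=(ku_1-v_1)/\pd_z u^s$. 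Under Assumption~\ref{ass-linsta} the new coefficients $\eta$ and $\zeta$ lie in $\cD_0^j$ and $\cA_l^j$ respectively, so the same weighted energy scheme applied to $g$—now in the norm $\cA_l^{j-1}$, since one derivative order is consumed by the need to estimate $1/\pd_z u^s$ together with its derivatives in $L^\infty$—delivers \eqref{est-linsta1-1}, with the initial and boundary contributions controlled by ${\mathcal M}_1$ via \eqref{est-ass2}.

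The main obstacle will not be any single step but the careful bookkeeping of high-order commutators inside the anisotropic norm $\cA_l^j$: specifically, the top-order mixed commutator $[\pd_\cT^{j_1}\pd_z^{j_2},\,w^s\pd_z]$ and the interaction of the weight $\wt^l$ with the $\pd_z^{j_2}$ derivatives must be arranged so that every error term is either absorbed by the diffusion of the top-order derivative or bounded by the data and by strictly lower-order norms of $\tv$. This follows the AWXY template, but must be adapted here to the presence of the lateral boundary $\pd Q_T^-$, which did not appear in the 2D setting and which forces the boundary terms to be tracked at every order via the splitting of $\pd_\xi+k\pd_\eta$ on $\pd D$.
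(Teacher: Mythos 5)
Your proposal follows essentially the same route as the paper: a weighted energy estimate with the multiplier $e^{-2\lambda t}\langle z\rangle^{2l}\tv$, the lateral boundary term split into a nonnegative outflow part and an inflow part on $\gamma_-$ absorbed by $\mathcal{M}_0$ via $\tv|_{\pd Q_T^-}=ku_1-v_1$, tangential derivatives handled by the same identity with commutators controlled through $u^s,w^s\in\cD_l^j$, and the second estimate obtained by passing to $\tilde w=\tv/\pd_z u^s$, whose equation you compute correctly and which coincides with the paper's problem \eqref{pr_tw}, with data controlled by $\mathcal{M}_1$. The one place where you deviate is the treatment of normal derivatives: the paper never performs energy estimates on $z$-derivatives of $\tv$; after the purely tangential estimates it reads $\pd_z^2\tv$ directly off the equation \eqref{def_vzz} and iterates this algebraic recovery (applying $\pd_\cT^\beta\pd_z^{j_2-2}$ to that identity), which is exactly what produces the anisotropic counting $j_1+\left[\frac{j_2+1}{2}\right]\le j$ of $\cA_l^j$ with no new boundary contributions. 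Your plan to apply $\pd_\cT^{j_1}\pd_z^{j_2}$ and run the weighted identity on these mixed derivatives is workable in principle but strictly harder: for $j_2\ge 1$ the traces $\pd_z^{j_2}\tv|_{z=0}$ do not vanish, so the integration by parts in $z$ generates boundary terms at $\{z=0\}$ that must themselves be expressed through the equation and the data; you do not address this, and the paper's algebraic recovery of normal derivatives is precisely the device that avoids it. Apart from this (avoidable) complication, the argument you outline is correct and delivers both \eqref{est-linsta1} and \eqref{est-linsta1-1}.
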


\begin{proof}[\bf{Proof.}]
From Assumption \ref{ass-linsta}, we know that the compatibility conditions
of the problem \eqref{eq_tv} hold up to the $(j-1)$-th order. So, the main task is to prove \eqref{est-linsta1} and \eqref{est-linsta1-1} which can be obtained in the following four steps.

\underline{Step 1.} $L^2$-estimate of $\tv$.

Multiplying $\eqref{eq_tv}_1$ by $e^{-2\lambda t}\wt^{2l}\tilde v$ and
integrating over $Q$, we get
\begin{equation}\label{est_tv}
\begin{split}
&{d\over 2dt}\|\tv(t)\|_\lam^2+\lambda\|\tv(t)\|_\lam^2+\int_Q(u^s\partial_x+ku^s\partial_y+w^s\partial_z)\tv\cdot e^{-2\lambda t}\wt^{2l}\tilde v-\int_Q\pd_z^2\tv\cdot e^{-2\lambda t}\wt^{2l}\tilde v\\
&\leq
\|(kf_1-f_2)(t)\|_\lam\cdot\|\tv(t)\|_\lam+\|k_yu^s(t)\|_{L^\infty(Q)}\cdot\|\tv(t)\|_\lam^2.
\end{split}\end{equation}

Now, we estimate the last two terms on the left hand side of \eqref{est_tv}. First,
from the boundary condition given in \eqref{eq_tv} on $\pd Q_T^-$, it follows that
\begin{equation}\label{est_tv0}
\begin{split}
&\quad\int_Q(u^s\partial_x+ku^s\partial_y+w^s\partial_z)\tv\cdot e^{-2\lambda t}\wt^{2l}\tilde v\\
&={1\over2}\int_{\pd D}\int_0^\infty e^{-2\lambda t}\wt^{2l}u^s|\tv|^2(1,k)\cdot\vec{n}-l\int_Q e^{-2\lambda t}z\wt^{2l-2}w^s|\tv|^2\\
&\geq {1\over2}\int_{\gamma_-}\int_0^\infty e^{-2\lambda t}\wt^{2l}k_nu^s|ku_1-v_1|^2-l\|w^s(t)\|_{L^\infty(Q)}\|\tv(t)\|_\lam^2,
\end{split}\end{equation}
where the function $k_n=(1,k)\cdot\vec{n}$ is defined on the boundary $\pd D$.

By using the boundary condition
\(\tilde v|_{z=0}=0,\)
we have
\begin{equation}\label{est_tv0-1}\begin{split}
-\int_Q\pd_z^2\tv\cdot e^{-2\lambda t}\wt^{2l}\tilde v
&=\int_Q e^{-2\lambda t}\pd_z\tv\cdot(\wt^{2l}\pd_z\tv+2l\wt^{2l-2}z\tv)\\
&\geq\|\pd_z\tv(t)\|_\lam^2-2l\|\pd_z\tv\|_\lam\cdot\|\tv(t)\|_\lam\\
&\geq{1\over2}\|\pd_z\tv(t)\|_\lam^2-2l^2\|\tv(t)\|_\lam^2.
\end{split}\end{equation}

Plugging \eqref{est_tv0} and \eqref{est_tv0-1} into \eqref{est_tv},
and choosing $\lambda$ large enough such that
\[\lambda\geq 1+2l^2+2l\|w^s\|_{L^\infty}+2\|k_yu^s\|_{L^\infty},\]
we obtain
\begin{equation}\label{est_tv0-2}
\begin{split}
&\quad{d\over dt}\|\tv(t)\|_\lam^2+\lambda\|\tv(t)\|_\lam^2+\|\pd_z\tv(t)\|_\lam^2\\
&\leq
\|(kf_1-f_2)(t)\|_\lam^2-\int_{\gamma_-}\int_0^\infty e^{-2\lambda t}\wt^{2l}u^s|ku_1-v_1|^2(1,k)\cdot\vec{n}.
\end{split}\end{equation}
Integrating \eqref{est_tv0-2} over $(0,t),~t\in(0,T]$, we get
\begin{equation}\label{est_tv0-3}
\begin{split}
&\quad\|\tv\|_{\tilde\cB_\lam^{0,0}}^2+\lambda\|\tv\|_{\cB_\lam^{0,0}}^2+\|\tv\|_{\cB_\lam^{0,1}}^2\\
&\leq \|ku_0-v_0\|_{\lam}^2+
\|kf_1-f_2\|_{\cB_\lam^{0,0}}^2-\int_{\pd Q_T^-} e^{-2\lambda t}\wt^{2l}
|ku_1-v_1|^2(1,k)\cdot\vec{n}\\
&\leq \|kf_1-f_2\|_{\cB_\lam^{0,0}}^2+\|ku_0-v_0\|_{\lam}^2+\|u^sk_n\|_{L^\infty(\pd Q_T^-)}\cdot\|e^{-\lambda t}\wt^l(ku_1-v_1)\|_{L^2(\pd Q_T^-)}^2.
\end{split}\end{equation}

 \underline{Step 2.} Estimates of tangential derivatives $\pd_\cT^\beta\tv~(|\beta|\leq j)$.

Applying the operator $\pd_\cT^\beta~(|\beta|\leq j)$ to the equation
 $\eqref{eq_tv}_1$, multiplying the resulting equation by $e^{-2\lambda t}\wt^{2l}\pd_\cT^\beta\tilde v$
 and integrating over $Q$, we get
\begin{equation}\label{est_tv1}
\begin{split}
&\quad{d\over 2dt}\|\pd_\cT^\beta\tv(t)\|_\lam^2+\lambda\|\pd_\cT^\beta\tv(t)\|_\lam^2+\|\pd_z\pd_\cT^\beta\tv(t)\|_\lam^2+\sum_{i=1}^3I_i\\
&\leq 2l\|\pd_z\pd_\cT^\beta\tv(t)\|_\lam\cdot\|\pd_\cT^\beta\tv(t)\|_\lam+
\|\pd_\cT^\beta(kf_1-f_2)(t)\|_\lam\cdot\|\pd_\cT^\beta\tv(t)\|_\lam,
\end{split}\end{equation}
where
\[\begin{split}
&I_1=\int_Q(u^s\partial_x+ku^s\partial_y+w^s\partial_z)\pd_\cT^\beta\tv\cdot e^{-2\lambda t}\wt^{2l}\pd_\cT^\beta\tilde v,\\
&I_2=\int_Q\big[\pd_\cT^\beta,u^s\partial_x+ku^s\partial_y+w^s\partial_z\big]\tv\cdot e^{-2\lambda t}\wt^{2l}\pd_\cT^\beta\tilde v,\\
&I_3=\int_Q\pd_\cT^\beta(k_yu^s\tv)\cdot e^{-2\lambda t}\wt^{2l}\pd_\cT^\beta\tilde v,
\end{split}\]
with the notation $[\cdot,\cdot]$ denoting the commutator.

We estimate the terms $I_i~(i=1,2,3)$ given in \eqref{est_tv1}. Obviously, we have
\[\begin{split}
I_1&={1\over2}\int_{\pd D}\int_0^\infty e^{-2\lambda t}\wt^{2l}u^s|\pd_\cT^\beta\tv|^2(1,k)\cdot\vec{n}
-l\int_Q e^{-2\lambda t}z\wt^{2l-2}w^s|\pd_\cT^\beta\tv|^2\\
&\geq{1\over2}\int_{\gamma_-}\int_0^\infty e^{-2\lambda t}\wt^{2l}k_nu^s|\pd_\cT^\beta\tv|^2-l\|w^s(t)\|_{L^\infty(Q)}\|\pd_\cT^\beta\tv(t)\|_\lam^2,
\end{split}\]
which implies that
\begin{equation}\label{est_tv2}
-I_1\lesssim\sum_{j_1+j_2+j_3\leq|\beta|}\int_{\gamma_-}\int_0^\infty e^{-2\lambda t}\wt^{2l}
|\pd_t^{j_1}\pd_\tau^{j_2}\pd_n^{j_3}\tv(t)|^2+\|\pd_\cT^\beta\tv(t)\|_\lam^2,
\end{equation}
by using that
\[\begin{split}
\Big|\int_{\gamma_-}\int_0^\infty e^{-2\lambda t}\wt^{2l}k_nu^s|\pd_\cT^\beta\tv|^2\Big|
\lesssim\sum_{j_1+j_2+j_3\leq|\beta|}\int_{\gamma_-}\int_0^\infty e^{-2\lambda t}\wt^{2l}
|\pd_t^{j_1}\pd_\tau^{j_2}\pd_n^{j_3}\tv|^2.
\end{split}\]

Secondly, by using commutator estimates given in \cite{taylor}, we get
\[\begin{split}
|I_2|&\leq\|\pd_\cT^\beta\tv(t)\|_\lam\cdot\Big\|\Big[\pd_\cT^\beta,u^s\partial_x+ku^s\partial_y+w^s\partial_z\Big]\tv(t)\Big\|_\lam\\
&\lesssim\|\pd_\cT^\beta\tv(t)\|_\lam\cdot\Big[\big(\|u^s(t)\|_{L^\infty(Q)}+\|w^s(t)\|_{L^\infty(Q)}\big)\cdot\|\tv\|_{\cB_\lam^{|\beta|,1}(t)}\\
&\quad +\big(\|u^s\|_{\cD_0^{|\beta|}(t)}+\|w^s\|_{\cD_0^{|\beta|}(t)}\big)\cdot\|\tv\|_{\cB_\lam^{3,1}(t)}\Big],
\end{split}\]
with the notations $\|\cdot\|_{\cB_\lam^{j_1.j_2}(t)}$ and $\|\cdot\|_{\cD_l^{j}(t)}$  given by
\[\|f\|_{\cB_\lam^{j_1, j_2}(t)}=\left(\sum\limits_{0\le m\le j_1, 0\leq
q\leq j_2} \|e^{-\lambda t}\wt^l\pd_\cT^m\pd_z^q f(t)\|^2_{L^2(Q)}\right)^{\frac{1}{2}},
\]
and
\[\|f\|_{\cD_l^j(t)}=\sum\limits_{j_1+[{j_2+1\over2}]\leq j}\|\wt^l\pd_\cT^{j_1}\pd_z^{j_2} f(t)\|_{L_z^\infty(L^2_{x,y})}.\]
Thus, we have
\begin{equation}\label{est_tv3}
|I_2|\leq{1\over4}\big(\|\tv\|^2_{\cB_\lam^{|\beta|,1}(t)}+\|\tv\|^2_{\cB_\lam^{3,1}(t)}\big)+C\|\pd_\cT^\beta\tv(t)\|^2_\lam.
\end{equation}
Similarly, for the term $I_3$, we obtain
\begin{equation}\label{est_tv4}
\begin{split}
|I_3|&\leq\|\pd_\cT^\beta\tv(t)\|_\lam\cdot\|\pd_\cT(k_yu^s\tv)(t)\|_\lam\\
&\lesssim\|\pd_\cT^\beta\tv(t)\|_\lam\cdot\Big[\|k_yu^s(t)\|_{L^\infty(Q)}\|\tv\|_{\cB_\lam^{|\beta|,0}(t)}+\|k_yu^s\|_{\cD_0^{|\beta|}(t)}\|\tv\|_{\cB_\lam^{2,0}(t)}\Big].
\end{split}\end{equation}
Plugging \eqref{est_tv2}-\eqref{est_tv4} into \eqref{est_tv1}, taking
summation over all $|\beta|\leq j$ and choosing $\lambda$ large enough, we obtain that
\begin{equation}\label{est_tv5}
\begin{split}
&\quad{d\over dt}\|\tv\|_{\cB_\lam^{j,0}(t)}^2+\lambda\|\tv\|_{\cB_\lam^{j,0}(t)}^2+\|\tv\|_{\cB_\lam^{j,1}(t)}^2\\
&\lesssim
\|kf_1-f_2\|_{\cB_\lam^{j,0}(t)}^2+\|\tv\|_{\cB_\lam^{3,1}(t)}^2+\sum_{j_1+j_2+j_3\leq j}\int_{\gamma_-}\int_0^\infty e^{-2\lambda t}\wt^{2l}
|\pd_t^{j_1}\pd_\tau^{j_2}\pd_n^{j_3}\tv(t)|^2,
\end{split}\end{equation}
which implies that
\begin{equation}\label{est_tv6}
\begin{split}
&\quad\|\tv\|_{\tilde\cB_\lam^{j,0}}^2+\lambda\|\tv\|_{\cB_\lam^{j,0}}^2+\|\tv\|_{\cB_\lam^{j,1}}^2\\
&\lesssim \|\tv\|_{\cB_\lam^{j,0}(0)}^2+
\|kf_1-f_2\|_{\cB_\lam^{j,0}}^2+\|\tv\|_{\cB_\lam^{3,1}}^2+\sum_{j_1+j_2+j_3\leq j}\int_{\pd Q_T^-} e^{-2\lambda t}\wt^{2l}
|\pd_t^{j_1}\pd_\tau^{j_2}\pd_n^{j_3}\tv|^2.
\end{split}\end{equation}
Since the functions $\pd_t^q\tv|_{t=0}$ and $~\pd_n^q\tv|_{\pd Q_T^-}~(q\geq0)$
can be represented by linear combinations of $(u_0^i,v_0^i)$ and $(u_1^i,v_1^i)~(0\leq i\leq q)$,
 from Assumption \ref{ass-linsta} we have
\[\|\tv\|_{\cB_\lam^{j,0}(0)}^2+\sum_{j_1+j_2+j_3\leq j}\int_{\pd Q_T^-} e^{-2\lambda t}\wt^{2l}
|\pd_t^{j_1}\pd_\tau^{j_2}\pd_n^{j_3}\tv|^2\lesssim \mathcal{M}_0^2.\]

Thus, from \eqref{est_tv6} we obtain
\begin{equation}\label{est_tv7}
\begin{split}
&\quad\|\tv\|_{\tilde\cB_\lam^{j,0}}^2+\lambda\|\tv\|_{\cB_\lam^{j,0}}^2+\|\tv\|_{\cB_\lam^{j,1}}^2\lesssim \mathcal{M}_0^2+
\|kf_1-f_2\|_{\cB_\lam^{j,0}}^2+\|\tv\|_{\cB_\lam^{3,1}}^2.
\end{split}\end{equation}

\underline{Step 3.} Estimates of normal derivatives.

From the equation $\eqref{eq_tv}_1$, we know that
\begin{equation}\label{def_vzz}
\pd_z^2\tv=\partial_t\tilde v+(u^s\partial_x+ku^s\partial_y+w^s\partial_z)\tilde
v+k_yu^s\tilde v-(kf_1-f_2),
\end{equation}
which implies
\[\begin{split}
\|\tv\|_{\cB_\lam^{j_1,2}}^2&\lesssim\|\tv\|_{\cB_\lam^{j_1+1,0}}^2+(\|u^s\|_{L^\infty}+\|ku^s\|_{L^\infty})\cdot\|\tv\|_{\cB_\lam^{j_1+1,0}}+\|w^s\|_{L^\infty}\cdot\|\tv\|_{\cB_\lam^{j_1,1}}\\
&\quad+(\|u^s\|_{\cD_0^{j_1}}+\|ku^s\|_{\cD_0^{j_1}}+\|w^s\|_{\cD_0^{j_1}})\cdot\|\tv\|_{\cB_\lam^{3,1}}
+\|k_yu^s\|_{L^\infty}\cdot\|\tv\|_{\cB_\lam^{j_1,0}}\\
&\quad+\|k_yu^s\|_{\cD_0^{j_1}}\cdot\|\tv\|_{\cB_\lam^{2,0}}+\|kf_1-f_2\|_{\cB_\lam^{j_1,0}}^2\\
&\lesssim \|\tv\|_{\cB_\lam^{j_1+1,0}}^2+\|\tv\|_{\cB_\lam^{j_1,1}}^2+\|\tv\|_{\cB_\lam^{3,1}}^2+\|kf_1-f_2\|_{\cB_\lam^{j_1,0}}^2~.
\end{split}\]
Combining the above inequality with \eqref{est_tv7}, it follows that
\begin{equation}\label{est_tv8}
\|\tv\|_{\cB_\lam^{j_1,2}}^2\lesssim \mathcal{M}_0^2+\|kf_1-f_2\|_{\cB_\lam^{j_1+1,0}}^2+\|kf_1-f_2\|_{\cB_\lam^{3,0}}^2~.
\end{equation}

For any fixed $j_2\ge 3$, applying the operator $\pd_\cT^\beta\pd_z^{j_2-2}~(|\beta|\leq j_1)$
to \eqref{def_vzz}, and using a similar argument as above, we get
\[
\|\tv\|_{\cB_\lam^{j_1,j_2}}^2\lesssim \|\tv\|_{\cB_\lam^{j_1+1,j_2-2}}^2+\|\tv\|_{\cB_\lam^{j_1,j_2-1}}^2+\|\tv\|_{\cB_\lam^{3,1}}^2+\|kf_1-f_2\|_{\cB_\lam^{j_1,j_2-2}}^2~,
\]
which implies that
\[
\|\tv\|_{\cB_{0,l}^{j_1,j_2}}^2\lesssim \|\tv\|_{\cB_{0,l}^{j_1+1,j_2-2}}^2+\|\tv\|_{\cB_{0,l}^{j_1,j_2-1}}^2+\|\tv\|_{\cB_{0,l}^{3,1}}^2+\|kf_1-f_2\|_{\cB_{0,l}^{j_1,j_2-2}}^2~.
\]
Therefore, we finally obtain that
\begin{equation}\label{est_tv9}
\|\tv\|_{\cA_l^j}\lesssim \mathcal{M}_0+\|kf_1-f_2\|_{\cA_l^j},
\end{equation}
which implies the estimates \eqref{est-linsta1} immediately.

\underline{Step 4.} Estimates of $\tilde v/\pd_z u^s$.

From the problem \eqref{eq_tv} of $\tilde v$, we know that $\tilde w\triangleq\tilde v/\pd_z u^s$ satisfies the following problem in $Q_T$:
\begin{equation}\label{pr_tw}
\begin{cases}
\pd_t \tilde w+(u^s\pd_x+k\pd_y u^s+w^s\pd_z)\tilde w-2\eta ~\pd_z \tilde w-\pd_z^2 \tilde w+(\zeta+k_y u^s)\tilde w=\frac{kf_1-f_2}{\pd_z u^s},\\
\tilde w|_{z=0}=0,\quad \lim\limits_{z\rightarrow+\infty}\tilde w=0,\quad \tilde w|_{\pd Q_T^-}=\frac{(ku_1-v_1)(t,x,y,z)}{\pd_z u^s(t,x,y,z)|_{\pd Q_T^-}},\\
\tilde w|_{t=0}=\frac{(ku_0-v_0)(x,y,z)}{\pd_z u^s(0,x,y,z)}.
\end{cases}\end{equation}
From Assumption \ref{ass-linsta} and by
a similar argument as given in  the above three steps for the problem \eqref{pr_tw} of $\tilde w$, one can obtain
\[
\|\tilde w\|_{\cA_l^{j-1}}\lesssim \mathcal{M}_1+\|{kf_1-f_2\over\pd_z u^s}\|_{\cA_l^{j-1}},
\]
from which the estimate \eqref{est-linsta1-1} follows. And this completes the proof of the lemma.
\end{proof}

Rewrite the problem \eqref{leqp}
by using that $v=ku-\tilde v$ as follows:
\begin{equation}\label{equw}
\begin{cases}
\partial_t u+(u^s\partial_x+ku^s\partial_y+w^s\partial_z)u+
  (u\partial_x+ku\partial_y+w\partial_z)u^s-\partial_z^2u=f_1+\tilde v\partial_yu^s,~in~Q_T,\\
\partial_x u+\partial_y (ku)+\partial_zw=\partial_y\tilde v,\quad in ~Q_T,\\
(u,w)|_{z=0}=0, \quad \lim\limits_{z\rightarrow +\infty}u=0,\quad
u|_{\pd Q_T^-}=u_1(t,x,y,z)|_{\pd Q_T^-},\\
u|_{t=0}=u_0(x,y,z).
\end{cases}
\end{equation}
As in \cite{AWXY}, for the problem \eqref{equw}, we introduce the transformation:
\begin{equation}\label{def_th}
h=\pd_z({u\over \pd_z u^s}),\qquad or\qquad u=\pd_z u^s\int_0^z hd\tilde z.
\end{equation}
Then, from \eqref{equw} we know that  $h(t,x,y,z)$ satisfies the following problem in $Q_T$:
\begin{equation}\label{eqh}
\begin{cases}
\partial_t h+[u^s\partial_x+k u^s
\partial_y+w^s\partial_z]h-2\partial_z(\eta
h)+\partial_z\big[(\zeta-k_y u^s)\int_0^z
  h ~ds\big]-\partial_z^2 h\\
  \qquad\qquad=\pd_z (\tilde f+\pd_y u^s\frac{\tilde v}{\pd_z u^s})-\pd_y \tilde v,\\[2mm]
  (\pd _z h+2\eta h)|_{z=0}=-\tilde f|_{z=0}, \quad h|_{\pd Q_T^-}=h_1(t,x,y,z)\triangleq  \pd_z({u_1(t,x,y,z)\over \pd_z u^s(t,x,y,z)|_{\pd Q_T^-}}),\\[2mm]
  h|_{t=0}=h_0(x,y,z)\triangleq \pd_z({u_0(x,y,z)\over\pd_z u^s(0,x,y,z)}),
\end{cases}
\end{equation}
where functions $\eta,\zeta,\tilde{f}$ are given in \eqref{def-linsta}.

Following the approach used in
 \cite{AWXY} and the proof of Lemma \ref{lem-linsta1}, we have the following
result on  the problem \eqref{eqh}.

\begin{lemma}\label{lem-linsta2}
Under Assumption \ref{ass-linsta}, the problem \eqref{eqh} has a unique
solution $h(t,x,y,z)$, and the following estimate holds:
\begin{equation}\label{pest-linsta2}
 \|h\|_{\cA_l^{j-1}}\leq C\Big(\mathcal{M}_1+\|\tilde f+\pd_y u^s\frac{\tilde v}{\pd_z u^s}\|_{\cA_l^{j-1}}
 +\|\pd_y \tilde v\|_{\cA_l^{j-1}}\Big)
\end{equation}
for a positive constant $C$.
\end{lemma}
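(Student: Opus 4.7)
The plan is to mimic the four-step weighted energy argument used to prove Lemma~\ref{lem-linsta1}, adapting it to the two new features of the problem \eqref{eqh}: the Robin boundary condition $(\pd_z h+2\eta h)|_{z=0}=-\tilde f|_{z=0}$, and the nonlocal term $\pd_z\big[(\zeta-k_y u^s)\int_0^z h\,ds\big]$. These two features were introduced (following \cite{AWXY} in the two-dimensional setting) precisely so that the worst boundary and zero-th order contributions cancel.

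First I would carry out the basic $L^2$-estimate. Multiply \eqref{eqh} by $e^{-2\lambda t}\wt^{2l}h$ and integrate over $Q$. The convective term $(u^s\pd_x+ku^s\pd_y+w^s\pd_z)h$ produces the same lateral boundary term on $\gamma_-$ and the same weight derivative as in Step~1 of Lemma~\ref{lem-linsta1}. Integration by parts in $z$ on $-\pd_z^2 h$ yields $-\pd_z h(0)\,h(0)$, which combines with the boundary piece coming from $-2\pd_z(\eta h)$ to give $(2\eta h^2+\tilde f\,h)|_{z=0}$ after using the Robin condition; the first term is bounded by $\|\eta\|\ft\,\|h(0)\|_{L^2}^2$ (controlled by a trace inequality plus $\pd_z$-dissipation), and the second by Cauchy--Schwarz. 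The nonlocal contribution, after integration by parts in $z$ against $\wt^{2l}h$, is handled by the weighted Hardy inequality $\|\wt^{l-1}\int_0^z h\,ds\|_{L^2}\les \|\wt^l h\|_{L^2}$ (valid for $l>1/2$) together with the bound on $\zeta-k_y u^s$ supplied by Assumption~\ref{ass-linsta}. On the right-hand side, integration by parts moves the outer $\pd_z$ off $\tilde f+\pd_y u^s\,\tv/\pd_z u^s$ onto $\wt^{2l}h$, producing terms absorbed by $\|\pd_z h\|\itl$ from the parabolic dissipation plus the stated source norms; the resulting boundary term at $z=0$ cancels with $-\tilde f|_{z=0}$ from the Robin condition. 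The term $-\pd_y\tv$ is paired directly with $h$ and estimated by the $\cA_l^j$-bound on $\tv$ from Lemma~\ref{lem-linsta1}.

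Second, I would apply $\pd_\cT^\beta$ for $|\beta|\le j-1$ to \eqref{eqh} and repeat the estimate; the commutators with $u^s$, $w^s$, $\eta$, $\zeta-k_y u^s$ are treated exactly as in Step~2 of Lemma~\ref{lem-linsta1} using the $\cD_0^j$-, $\cA_l^j$- and $C^{j+1}$-bounds of Assumption~\ref{ass-linsta}. The nonlocal operator $\int_0^z\cdot\,ds$ commutes with $\pd_\cT$, so the Hardy argument applies unchanged at all tangential orders. Third, as in Step~3 of Lemma~\ref{lem-linsta1}, normal derivatives are recovered inductively by isolating $\pd_z^2 h$ from the equation and applying $\pd_\cT^{\beta}\pd_z^{q-2}$, trading $\pd_z$-regularity of $h$ for tangential regularity and for $\cA_l^{j-1}$-regularity of the source and of $\tv$; the latter is already supplied by \eqref{est-linsta1} and \eqref{est-linsta1-1}. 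Summing all multi-indices appearing in the $\cA_l^{j-1}$-norm and using the bound $\mathcal M_1$ on the traces of $h_0$ and $h_1$ (which follows from the bounds on $(u_0,u_1)/\pd_z u^s$ in Assumption~\ref{ass-linsta}) yields \eqref{pest-linsta2}.

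The main obstacle is the simultaneous handling of the Robin condition, the nonlocal integral term, and the $\pd_z$-structure of the source: individually each produces a boundary or growth contribution that is not absorbable by the parabolic dissipation, and the proof must exhibit the two precise cancellations (Robin condition $\leftrightarrow$ trace of $-\pd_z^2 h$ and of $\pd_z$-source; Hardy inequality $\leftrightarrow$ nonlocal term) that make the estimate close. Once the a priori estimate is in hand, existence and uniqueness of $h\in \cA_l^{j-1}(Q_T)$ follow by a standard approximation scheme (viscous regularization near $\{\zeta k_n=0\}$ combined with a Galerkin truncation in the tangential variables), the compatibility conditions in Assumption~\ref{ass-linsta} guaranteeing that the approximating traces converge to $h_0$ and $h_1$.
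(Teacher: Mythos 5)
Your proposal is correct and follows essentially the same route as the paper, which at this point simply invokes the method of \cite{AWXY} together with the four-step weighted energy argument of Lemma \ref{lem-linsta1}: your sketch correctly identifies the two key cancellations (the Robin condition at $\{z=0\}$ absorbing the traces produced by $-\pd_z^2h$, $-2\pd_z(\eta h)$ and the $\pd_z$-form of the source, and the weighted Hardy inequality for $l>\tfrac12$ controlling the nonlocal term), and the tangential/normal derivative steps mirror Steps 2--3 of that lemma. The only cosmetic blemishes are the sign/grouping of the boundary terms at $z=0$ (the net contribution is a single $\tilde f(0)h(0)$ term that cancels) and the mention of regularization near $\{\zeta k_n=0\}$, which belongs to the Crocco-transformed problem rather than to the linear problem \eqref{eqh}; neither affects the argument.
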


Finally, by combining the results given in Lemmas \ref{lem-linsta1} and \ref{lem-linsta2}, we obtain classical solutions
$\tv\in\cA_l^j(Q_T)$ to the problem \eqref{eq_tv}, and $h\in\cA_l^{j-1}(Q_T)$ to the problem \eqref{eqh}, leading to
\[u=\pd_z u^s\int_0^z hd\tilde z,\qquad~v=k\pd_z u^s\int_0^z hd\tilde z-\tv,\]
and
\[w=-\int_0^z(\pd_x u+\pd_y v)d\tilde z,\]
from which we immediately obtain $(u, v)\in \cA_l^{j-1}(Q_T)$, $w\in\cD_0^{j-2}(Q_T)$, and the estimate \eqref{est_or}. It is straightforward to show that $(u,v,w)$ is the unique classical solution to the problem \eqref{leqp}. Thus, this concludes Theorem 3.2.


\section{Construction of approximate solutions to problem \eqref{eq_tr}}

Now, we will develop Oleinik's method \cite{Ole}
to construct the approximate solution sequence $\{W^n\}_{n\geq0}$ to the problem \eqref{eq_tr}.


\subsection{Construction of the zero-th order approximate solution}

In this subsection, we  construct the zero-th order approximate solution $W^0$ of the problem \eqref{eq_tr}.

To do this, we first introduce several notations for later use.\\

{\underline{\it Notations}:}
\begin{enumerate}
\item[(1)]
For the domain $D\subset\R^2$ with a smooth boundary $\partial D$,
set
\[\gamma_+~=~\{(\xi,\eta)\in\pd D: (1,k(\xi,\eta))\cdot\vec{n}(\xi,\eta)>0\}.\]
\item[(2)]
For a sufficiently small number $\rho>0$,
denote by
\[\Gamma_\rho=\{(\xi,\eta)\in \R^2\setminus D:~|(\xi,\eta)-\pd D|<\rho\},\]
\[\Gamma^-_\rho=\{(\xi,\eta)\in \R^2\setminus D:~|(\xi,\eta)-\gamma_-|<\rho\}\subset \Gamma_\rho,\]
with $d(\xi,\eta)$ denoting the distance from $(\xi,\eta)$ to $\pd D$ and $P(\xi,\eta)$ the
point of $\pd D$ closest to $(\xi,\eta)$.

\item[(3)] In the $(\xi,\eta)-$plane,
let $\widetilde{D}$ be an infinitely differentiable bounded domain satisfying
\[D\cup \Gamma^-_{\sigma/2}~\subset~\widetilde{D}~\subset~D\cup \Gamma_\sigma,\]
for a fixed $0<\sigma<{\rho\over2}$, and $D^*$ is a simply connected domain with $C^1$ boundary satisfying
\[D\cup\gamma_+~\subset~ D^*~\subset~\widetilde{D},\quad \gamma_-~\subset~\pd D\cap\pd D^*.\]
Furthermore,
 there is a smooth extension $(k_1(\xi,\eta), k_2(\xi,\eta))$ of the vector field $(1,k(\xi,\eta))$ from the domain $D$ to the domain $D^*$, such that
\begin{equation}\label{ass_k12}
(k_1(\xi, \eta),k_2(\xi,\eta))\cdot\vec{n}(\xi,\eta)~\leq~0,\quad \forall (\xi,\eta)\in \pd D^*,\end{equation}
where $\vec{n}(\xi,\eta)$ is the outward normal vector on $ \pd D^*$.

\item[(4)]  In the $(\xi, \eta, \zeta)-$space, let $G$ be a
simply
connected smooth domain, satisfying
\[\widetilde{D}\times[0,1+\delta]_\zeta~\subset~G~\subset~(D\cup \Gamma_\rho)\times[0,1+\delta]_\zeta,\]
for a small fixed number $\delta>0$.

\item[(5)] In the $(t, \xi, \eta, \zeta)-$space,
denote by $\Sigma$ a smooth bounded domain, satisfying
\[[-1,T+1]_t\times G~\subset~\Sigma~\subset~[-2,T+2]_t\times G,\]
and
\[\Sigma^*:=\Big(\Sigma\cap\{0\leq t< T,(\xi,\eta)\in D^*\}\Big)~\bigcup~\Big(\Sigma\cap\{ t\geq T\}\Big).\]

\end{enumerate}

\begin{remark}

(1) From \cite{Fife}, we know that $d(\xi,\eta)$ and $P(\xi,\eta)$ are uniquely defined for $(\xi,\eta)\in \Gamma_\rho$ if $\rho$ is properly small.\\
(2) From the condition $(H1)$ given in the introduction, we know that the domain $D^*$ and functions
$k_1(\xi,\eta),k_2(\xi,\eta)$ are well defined.\\

(3) From the above notations, obviously
we have
\[\pd \Sigma\cap\{-1\leq t\leq T+1\}~=~[-1,T+1]\times\pd G,\]
and $\Omega~\subset~\Sigma^*$,
\(\Gamma_-~\subset~\pd \Omega\cap\pd \Sigma^*.\)
\end{remark}

Set
\[\begin{split}
&S_0:=\{t=0,~(\xi,\eta)\in D,~0\leq\zeta\leq 1\},\\
&S_1:=\{0\leq t \leq T,~(\xi,\eta)\in \gamma_-,~0\leq\zeta\leq 1\},
\end{split}
\]
and $N_\delta(S_0)$ ($N_\delta(S_1)$ resp.) a $\delta-$neighborhood of $S_0$ ($S_1$ resp.) in $(t,\xi,\eta,\zeta)$-space.

To construct $W^0$, we first define $W^*$ as
\begin{equation}\label{w*}
W^*(t,\xi,\eta,\zeta)=W_0(\xi,\eta,\zeta)+t\cdot W_0^1(\xi,\eta,\zeta)+{t^2\over 2!}\cdot W_0^2(\xi,\eta,\zeta)+\cdots+{t^4\over 4!}\cdot W_0^4(\xi,\eta,\zeta),
\end{equation}
in $N_\delta(S_0)\cap\{t\leq0, (\xi,\eta)\in D, 0\le \zeta\le 1\}$, and
\begin{equation}\label{w*1}
\begin{split}
W^*(t,\xi,\eta,\zeta)=& W_1(t,P(\xi,\eta),\zeta)+d(\xi,\eta)\cdot W_1^1(t,P(\xi,\eta),\zeta)\\
&+{d(\xi,\eta)^2\over 2!}\cdot W_1^2(t,P(\xi,\eta),\zeta)+\cdots+{d(\xi,\eta)^4\over 4!}\cdot W_1^4(t,P(\xi,\eta),\zeta),
\end{split}\end{equation}
in $N_\delta(S_1)\cap\{t\geq0, (\xi,\eta)\in \widetilde{D}\cap \Gamma^-_\rho, 0\le \zeta\le 1\}$,  where
$W_0^i$, $W_1^j$ ($1\le i, j\le 4$) are given in \eqref{def_w0} and \eqref{def_w1} respectively.

Now, we extend the function $W^*$, given in \eqref{w*} and \eqref{w*1} near the boundary $S_0$ and $S_1$, smoothly into the remaining part of the region $\Sigma\setminus \Sigma^*$,
such that $W^*\in C^6(\overline{\Sigma\setminus \Sigma^*})$ and $W^*$ is infinitely differentiable away from the boundary $S_0\cup S_1$.
Such function $W^*$ can be constructed by using Assumption \ref{ass_1}, and it follows immediately that
\[\pd_t^i W^*~=~W_0^i,\quad on~S_0;\qquad \pd_n^j W^*~=~W_1^j,\quad on~S_1,\]
for all $0\leq i,j\leq 4$.

We extend $W^*$ smoothly into $\Sigma^*$, which is still denoted by $W^*$ for simplicity, such that $W^*$ has bounded derivatives up to order four in $\Sigma$, and
\[M^{-1}(1-\zeta)\leq W^*\leq M(1-\zeta), \quad in~\Omega,\]
for the positive constant $M$ given in \eqref{seq_ass}.

Finally, by letting $W^0=W^*|_{\Omega}$, we get that $W^0(t,\xi,\eta,\zeta)$ satisfies the conditions given in \eqref{w0}.


\subsection{Construction of the $n-$th order approximate solution}

In this subsection, we will construct the approximate solution $W^n$ to the linearized problem \eqref{eq_seq}.
Precisely, a sequence of functions $\{W^n(t,\xi,\eta,\zeta)\}_{n\geq0}$
will be constructed by induction on $n$,
 in the region $\Sigma$, satisfying
the following properties:
\begin{enumerate}
\item when $n=0$, $W^0=W^*$ for the function $W^*$ constructed in  Section 2.2;
\item
$W^n(t,\xi,\eta,\zeta)~(n\geq0)$ has continuous bounded derivatives in $\Sigma$ up to order three,
and the third order derivatives are Lipschitz continuous;
\item for all $n\ge 0$, \[\begin{split}
&W^n(t,\xi,\eta,\zeta)=W^*(t,\xi,\eta,\zeta),\quad in ~\Sigma\setminus \Sigma^*,\\
&W^n(t,\xi,\eta,1)~=~0,\quad \forall~t\in(0,T),~ (\xi,\eta)\in D;
\end{split}\]
\item for all $n\ge 1$, the functions $W^n(t,\xi,\eta,\zeta)$ satisfy the problem \eqref{eq_seq} in $\Omega$.
\end{enumerate}

For any fixed $n\geq1$, suppose that $W^{n-1}$
 satisfies the above four properties,
we will  verify that $W^n$ satisfies the same properties. Note that
the coefficient of the zero-th order term in the first equation of $\eqref{eq_seq}$
may vanish. Set
\begin{equation}\label{tw}
\W^n(t,\xi,\eta,\zeta)=e^{-\lambda t}W^n(t,\xi,\eta,\zeta)
\end{equation}
with a constant $\lambda>\|B\|_{L^\infty(\Omega)}$, then from \eqref{eq_seq} we know that $\W^n(t,\xi,\eta,\zeta)$ satisfies the following problem in $\Omega,$
\begin{equation}\label{eq_seq1}
\begin{cases}
\partial_t \widetilde{W}^n+\zeta U(\partial_\xi +k\partial_\eta)\widetilde{W}^n+A\pd_\zeta \widetilde{W}^n+(B+\lambda)\widetilde{W}^n-(W^{n-1})^2\pd_\zeta^2 \widetilde{W}^n=0,\\
 W^{n-1}\pd_\zeta \widetilde{W}^n|_{\zeta=0}=e^{-\lambda t}{ p_x\over U},
~\widetilde{W}^n|_{\Gamma_-}=e^{-\lambda t}W_1(t,\xi,\eta,\zeta),\\
\widetilde{W}^n|_{t=0}= W_0(\xi,\eta,\zeta).
\end{cases}\end{equation}

As in \cite{OA1},  introduce an elliptic operator in $\Sigma,$
\begin{equation}\label{op_seq}
\begin{split}
L^\epsilon(w)~\triangleq~&-\epsilon\triangle w-a_1\pd_t^2 w-a_2\pd_\xi^2 w-a_3\pd_\eta^2 w-[a_4+(W^{n-1})_\epsilon^2]\pd_\zeta^2 w\\
&+\pd_t w+\zeta U_\epsilon
(k_{1,\epsilon}\pd_\xi+k_{2,\epsilon}\pd_\eta)
w+A_\epsilon\pd_\zeta w+[B'_\epsilon+2(a_1+\epsilon)] w,
\end{split}\end{equation}
for a small parameter $\epsilon>0$, where
\[\triangle w~\triangleq~\pd_t^2 w+\pd_\xi^2 w+\pd_\eta^2 w+\pd_\zeta^2 w.\]
Here,
the notation $f_\epsilon$ denotes a regularization of the function $f$ by means of convolution with a non-negative $C^\infty$ function compactly supported in a ball of radius $\epsilon$. Moreover,
the functions $U$, $A$ and $B'$ are smooth extension of the corresponding functions and $\lambda+B$ from $\Omega$ to $\Sigma$ such that they are in $C^5(\Sigma)$ and $B'>0$; functions $k_1(\xi,\eta)$ and $k_2(\xi,\eta)$ are given in Notations (3) in the Subsection 2.2; the non-negative functions $a_i\in C^\infty(\Sigma),~1\le i\le 4$ satisfy the following conditions:
\begin{enumerate}
\item
$a_i>0~ (1\le i\le 4)$ for $ t<-1/2$  and $t>T+\frac{1}{2}$;
\item $a_2,a_3>0$ in the $\delta$-neighborhood of the boundary $[-1,T+1]\times(\pd G\setminus\{\zeta=0,or,1+\delta\})$;
\item  $a_4>0$  in the $\delta$-neighborhood of the boundary $[-1,T+1]\times(\pd G\setminus\{\zeta=0\})$;
\item $a_i=0$  on the rest of $\Sigma$  for  all $1\le i\le 4$.
\end{enumerate}

Then, consider the following elliptic problem with the Neumann boundary condition:
\begin{equation}\label{pr_n}
\begin{split}
&L^\epsilon(\W_\epsilon^n)~=~F_\epsilon, \quad in\quad \Sigma,\\
&{\pd \W_\epsilon^n\over\pd n}~=~f_\epsilon,\qquad on\quad \pd \Sigma,
\end{split}\end{equation}
where $\vec{n}$ is the unit outward normal
vector on $\pd \Sigma$,
 and the functions $F$ and $f$ are defined by
\[
F~=~\begin{cases}0, \qquad\qquad\qquad\qquad\qquad\quad\,~{\rm in}~\Omega;\\
L^\epsilon(e^{-\lambda t}W^*)+\epsilon\Delta (e^{-\lambda t}W^*)+[(W^*)^2-(W^{n-1})_\epsilon^2]\partial_\zeta^2 (e^{-\lambda t}W^*)
\\
\qquad\qquad+
[\zeta U(k_1\pd_\xi+k_2\pd_\eta)-\zeta U_\epsilon(k_{1,\epsilon}\pd_\xi+k_{2,\epsilon}\pd_\eta)](e^{-\lambda t}W^*)
\\
\qquad\qquad
+
[(A-A_\epsilon)\pd_\zeta +B'-B'_\epsilon-2\epsilon](e^{-\lambda t}W^*),\qquad{\rm in}~
\Sigma\setminus \Sigma^*;
\\
{\rm smooth ~connection},\qquad\quad\,\ {\rm in~the~rest~of}~\Sigma,
\end{cases}
\]
and
\[
f~=~\begin{cases} -e^{-\lambda t}{p_x\over UW^{n-1}},~&{\rm on}~S_3;\\
\frac{\pd}{\pd n} (e^{-\lambda t}W^*),
~ &{\rm on}~\pd Q\cap\pd(\Sigma\setminus \Sigma^*);\\
{\rm smooth ~connection},~ &{\rm on~the~rest~of}~\pd \Sigma,
\end{cases}
\]
where
\[S_3:=\{0\leq t\leq T,~(\xi,\eta)\in D,~\zeta=0\}.\]
Moreover, from the construction of $W^*$ defined in
$\Sigma\setminus \Sigma^*$,
 we can assume that the function $F$ has bounded derivatives up to
order four
in $\Sigma$ and is infinitely differentiable outside a $\delta$-neighborhood of $\Omega$; the function $f$ also has bounded derivatives up to
order four  in a neighborhood of $S_3$ and is infinitely differentiable on the rest of $\pd \Sigma$.

The boundary value problem \eqref{pr_n} has a unique solution $\W^n_\epsilon$ in the region $\Sigma$
by using the classical theory of elliptic equations, cf.  \cite[Theorem 3.6]{ADN} and  \cite[Theorem 12.7]{GT},
by noting that the coefficients and the right hand sides of the problem \eqref{pr_n} are smooth in $\Sigma$ and the coefficient of the zero-th order of $\W^n_\epsilon$ is positive,

Then, we show that
the derivatives up to order four of function $\W^n_\epsilon$ are uniformly
bounded in $\epsilon$. We establish the following proposition for $\W^n_\epsilon$.

\begin{prop}\label{prop_n2}
In the domain $\Sigma$, the solution $\W^n_\epsilon$ of the problem \eqref{pr_n} and its derivatives, up to order four, are bounded uniformly in $\epsilon$.
\end{prop}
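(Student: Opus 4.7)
The plan is to establish the uniform bounds through a hierarchy of $L^\infty$ estimates, starting from $\W^n_\epsilon$ itself and proceeding inductively to its derivatives up to order four. The underlying philosophy is that although $L^\epsilon$ is genuinely elliptic only for $\epsilon>0$, the dominant part of the operator is the (degenerate) parabolic operator from \eqref{eq_seq1}, and the maximum-principle / auxiliary-function arguments from Section 2 can be adapted in $\Sigma$.

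First, I would obtain an $L^\infty$ bound on $\W^n_\epsilon$ by applying the maximum principle directly to \eqref{pr_n}. The zero-th order coefficient $B'_\epsilon+2(a_1+\epsilon)$ is bounded below by a strictly positive constant independent of $\epsilon$ (since $\lambda>\|B\|_{L^\infty}$ forces $B'>0$ in $\Sigma$), so the bound depends only on $\|F\|_{L^\infty(\Sigma)}$, $\|f\|_{L^\infty(\pd\Sigma)}$ and on this lower bound; all three are $\epsilon$-independent by construction, noting that $F$ and $f$ are defined as regularizations of fixed $C^4$ functions.

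Second, for the derivative bounds I would mimic the barrier construction for $\Phi_n$ and $\Psi_n$ carried out in Lemma \ref{lem_seq2} and Theorem \ref{thm_seq}. Differentiating $L^\epsilon(\W^n_\epsilon)=F_\epsilon$ once in each variable, multiplying by $\pd^\alpha\W^n_\epsilon$ and summing, one obtains an inequality for
\[
\Phi_\epsilon\;=\;\sum_{|\alpha|=1}|\pd^\alpha\W^n_\epsilon|^2+K_0+K_1\zeta
\]
of the form $L^\epsilon(\Phi_\epsilon)+R\,\Phi_\epsilon\le 0$ in $\Sigma$, with $R$ depending only on the $C^2$ bounds of the smooth coefficients $U,A,B',k_{1,\epsilon},k_{2,\epsilon}$ and on the already established $C^2$ bound of $(W^{n-1})_\epsilon^2$ extended non-negatively to $\Sigma$; the bad term involving $\pd_\zeta^2 \W^n_\epsilon$ times $\pd[(W^{n-1})_\epsilon^2]$ is absorbed via the elementary inequality \eqref{ineq_de} applied to the extension of $(W^{n-1})_\epsilon^2$. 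The boundary condition is handled using $\pd_n\W^n_\epsilon=f_\epsilon$ together with the positivity of $a_2,a_3,a_4$ near $\pd\Sigma$, which controls the transversal derivatives in a neighborhood of $\pd G$. Iterating with an analogous $\Psi_\epsilon$ for second derivatives, and then applying the same scheme to the equations satisfied by $\pd^\alpha\W^n_\epsilon$ with $|\alpha|=2,3$, yields uniform bounds up to order four, which is the maximal order admissible under the $C^6$ regularity of $W^*$ granted by Assumption \ref{ass_1}(2)(i) and the $C^4$ regularity of $F,f$.

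The main obstacle is precisely the degeneracy of $(W^{n-1})_\epsilon^2$ at $\{\zeta=1\}$ and the loss of ellipticity of the original operator as $\epsilon\to 0$: this prevents one from invoking standard Schauder estimates with $\epsilon$-independent constants. The resolution is the structural one sketched above, namely replacing Schauder theory by the auxiliary-function / maximum-principle argument, which only requires $L^\infty$ control of lower-order quantities rather than any uniform ellipticity, and handling the potentially troublesome factor $\pd^\alpha[(W^{n-1})_\epsilon^2]$ through \eqref{ineq_de}. A secondary technical point is the compatibility between the interior equation and the Neumann condition near the corners $[-1,T+1]\times\pd G\cap\{\zeta=0\}$, where one must use the explicit smooth connection built into the definitions of $F$ and $f$ together with the positivity conditions imposed on the auxiliary weights $a_i$ to ensure that no $\epsilon^{-1}$ blow-up appears in the boundary contributions.
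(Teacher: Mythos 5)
Your first step (the uniform $L^\infty$ bound via the maximum principle and the positive zero--order coefficient) is exactly the paper's Lemma \ref{lem_n1}, and your idea of propagating bounds through auxiliary functionals of squared derivatives, with the degenerate coefficient $(W^{n-1})_\epsilon^2$ handled by \eqref{ineq_de}, is indeed the core of the interior argument. But the proposal has a genuine gap precisely where the paper spends most of its effort: the control of these functionals on $\pd\Sigma$. You assert that the Neumann condition $\pd_n\W^n_\epsilon=f_\epsilon$ ``together with the positivity of $a_2,a_3,a_4$ near $\pd\Sigma$ controls the transversal derivatives,'' but positivity of the $a_i$ by itself gives nothing; one must actually run boundary elliptic estimates there, and the operator is \emph{not} uniformly elliptic in $t$ on $\Sigma_{int}$ (since $a_1=0$ for $-\tfrac12\le t\le T+\tfrac12$), so standard Neumann Schauder theory cannot simply be cited. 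The paper's route is structural: it splits $\Sigma$ into $\Sigma_{out}$, where all $a_i>0$ make the problem uniformly elliptic in $\epsilon$ and Schauder estimates give the bounds (Lemma \ref{lem_n2-1}), and $\Sigma_{int}$; on the lateral boundary $[-\tfrac12-r_1,T+\tfrac12+r_1]\times\pd'G$ it proves a separate local estimate (Lemma \ref{lem_n2-2}) by flattening the boundary, splitting off $\Gamma(Y)=Y_t-(a_1+\epsilon)\pd_t^2Y$ and treating the remaining spatial operator $L_2$ (uniformly elliptic there thanks to $a_2,a_3,a_4>0$) with cut-off functionals $\Lambda_1,\Lambda_2$, the quantities $\mathcal F(\Gamma),\mathcal F(\Gamma_t),\mathcal F(\Gamma_{tt})$ and spatial Schauder estimates; and only then does it run the maximum-principle functionals $\Pi_1,\dots,\Pi_4$ in $\Sigma_{int}$, using the weight $e^{\psi(\zeta)}$, the corrector $Y=(\alpha V-f_\epsilon)\varphi(\zeta)$, the linear term $k(\zeta)$ with $k'(0)>0$ and $k'(1+\delta)<0$ large, and the cut-off $g(\zeta)$, to rule out boundary maxima at $\{\zeta=0\}\cup\{\zeta=1+\delta\}$ and to recover $\pd_\zeta^2\W^n_\epsilon$ from the equation where the $\pd_\zeta^2$ coefficient is negative. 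Your plan of ``mimicking Lemma \ref{lem_seq2}'' transplants a Dirichlet-type situation ($\Gamma_-$, where $\pd_\tau W^n$ and $\pd_n W^n$ are computable from the data and compatibility relations) to a pure Neumann problem on all of $\pd\Sigma$, and without some substitute for Lemmas \ref{lem_n2-1}--\ref{lem_n2-2} the maximum of your $\Phi_\epsilon$ could simply sit on the lateral boundary or on the time caps, where you have no estimate; so your claim that the argument needs ``no uniform ellipticity'' is not tenable.

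A second, smaller but real defect is the fourth-order step. The data $F,f$ are only $C^4$, and in the functional for the fourth derivatives ($\Pi_4$ in the paper) the corrector forces you to apply the operator to third derivatives of $f_\epsilon$, which produces \emph{fifth} derivatives of the mollified datum, of size $\mathcal O(\epsilon^{-1})$ near $S_3$. The reason this does not destroy the estimate is not the ``smooth connection'' or the positivity of the $a_i$, as you suggest, but the fact that near $S_3$ the only second-order tangential terms left in the operator are $\epsilon\pd_t^2,\epsilon\pd_\xi^2,\epsilon\pd_\eta^2$, so the offending terms carry a compensating factor $\epsilon$. Without identifying this cancellation, the induction ``up to order four'' in your scheme does not close.
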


At this moment, we first assume that
Proposition \ref{prop_n2} is true, which will be studied later. And we are going to prove the following proposition
 from which the existence of the solution $W^n$ to the problem \eqref{eq_seq} follows immediately.

\begin{prop}\label{lem_n3}
There exists a function $W^n(t,\xi,\eta,\zeta)$ in $\Sigma$ such that $W^n$ has continuous derivatives up to order three, and the third order derivatives of $W^n$
are Lipschitz continuous. Moreover, the restriction of $W^n$ in $\Omega$ is a solution to the problem \eqref{eq_seq}, and
$W^n=W^*$ in $\Sigma\setminus \Sigma^*$,
\begin{equation}\label{bd_n3}
W^n(t,\xi,\eta,1)~=~0,\quad \forall~t\in(0,T),~(\xi,\eta)\in D.
\end{equation}
\end{prop}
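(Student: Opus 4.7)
The plan is to obtain $W^n$ as a limit, along a suitable subsequence $\epsilon_k \to 0$, of the regularized solutions $\widetilde{W}^n_\epsilon$ to the auxiliary elliptic problem \eqref{pr_n}. By Proposition \ref{prop_n2}, the family $\{\widetilde{W}^n_\epsilon\}$ is bounded in $C^4(\overline{\Sigma})$ uniformly in $\epsilon$, so by the Arzel\`a--Ascoli theorem I may extract a subsequence that converges in $C^3(\overline{\Sigma})$ to some $\widetilde{W}^n$; moreover, the uniform $C^4$ bound forces the third derivatives of the limit to be Lipschitz. I then set $W^n := e^{\lambda t}\widetilde{W}^n$, which inherits the same regularity.

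To check that the restriction of $W^n$ to $\Omega$ solves \eqref{eq_seq}, I would pass to the limit in the PDE. In $\Omega$ the cut-off functions $a_1,a_2,a_3,a_4$ all vanish, so the operator $L^\epsilon$ reduces to an $\epsilon$-viscous version of the linearized Prandtl operator; the forcing term $F$ vanishes in $\Omega$, and the regularized coefficients $U_\epsilon, A_\epsilon, B'_\epsilon, k_{i,\epsilon}$ and the mollified square $(W^{n-1})_\epsilon^2$ converge uniformly on $\overline{\Omega}$ to their unregularized counterparts because $W^{n-1}$ and the other coefficients are smooth. The $\epsilon\Delta$ term also disappears thanks to the $C^3$ convergence. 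This recovers \eqref{eq_seq1} for $\widetilde{W}^n$, and after multiplying by $e^{\lambda t}$ gives \eqref{eq_seq} for $W^n$. The Neumann condition on $\{\zeta=0\}\cap\partial\Sigma$, namely $-\partial_\zeta\widetilde{W}^n_\epsilon = f = -e^{-\lambda t}\frac{p_x}{UW^{n-1}}$, survives the limit and, upon multiplying by $e^{\lambda t}W^{n-1}$, yields the boundary condition $W^{n-1}\partial_\zeta W^n|_{\zeta=0} = p_x/U$.

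The identity $W^n = W^*$ on $\Sigma\setminus\Sigma^*$ follows from the way the data $F$ and $f$ were designed: in that region, $e^{-\lambda t}W^*$ is by construction a solution of $L^\epsilon(\,\cdot\,) = F$ with Neumann trace $f$ on the part of $\partial\Sigma$ contained there (up to terms of order $\epsilon$ that vanish in the limit). Since $L^\epsilon$ is uniformly elliptic on $\Sigma\setminus\Sigma^*$ (the functions $a_i$ are strictly positive there) and the zeroth-order coefficient is positive, the usual uniqueness/maximum-principle argument for the elliptic Neumann problem forces $\widetilde{W}^n_\epsilon \to e^{-\lambda t}W^*$ on $\Sigma\setminus\Sigma^*$ as $\epsilon\to 0$, so $W^n = W^*$ there. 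Because $\Gamma_-\subset \partial\Omega\cap\partial\Sigma^*$ and $\{t=0\}\cap\Omega\subset \partial\Sigma^*$ all lie adjacent to this frozen region, the traces $W^n|_{\Gamma_-} = W_1$ and $W^n|_{t=0} = W_0$ are obtained automatically from the construction of $W^*$ through \eqref{w*} and \eqref{w*1}.

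The final and most delicate point is the identity \eqref{bd_n3}. This is where I expect the main work, and it is the reason the induction on $n$ is formulated with this property. At $\zeta=1$, the explicit form $A = -\zeta(1-\zeta)U_t/U - (1-\zeta^2)p_x/U$ gives $A|_{\zeta=1} = 0$, and by the inductive hypothesis $(W^{n-1})^2|_{\zeta=1}=0$, so the equation \eqref{eq_seq} restricted to the hyperplane $\{\zeta=1\}$ collapses to the linear transport equation
\[
\partial_t\varphi + U(\partial_\xi + k\partial_\eta)\varphi + B(t,\xi,\eta,1)\varphi = 0, \qquad \varphi := W^n|_{\zeta=1},
\]
on $(0,T)\times D$. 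The trace $\varphi|_{t=0} = W_0(\xi,\eta,1) = 0$ and the inflow trace $\varphi|_{\gamma_-} = W_1(t,\xi,\eta,1) = 0$ vanish thanks to the compatibility conditions in Assumption \ref{ass_1}(2)(ii). Since the characteristic vector field $(1,U,kU)$ enters $(0,T)\times D$ only through $\{t=0\}\cup((0,T]\times\gamma_-)$, integrating along characteristics --- or, equivalently, running a standard weighted $L^2$ estimate using Gr\"onwall's inequality and the sign condition $(1,k)\cdot\vec{n}\ge 0$ on $\partial D\setminus\gamma_-$ --- yields $\varphi\equiv 0$. This closes the induction and concludes the proposition.
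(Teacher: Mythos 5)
Your plan has the same skeleton as the paper's proof (compactness from Proposition \ref{prop_n2}, passage to the limit in \eqref{pr_n}, and the transport argument on $\{\zeta=1\}$ using $A|_{\zeta=1}=0$ and the induction hypothesis $W^{n-1}|_{\zeta=1}=0$), and your first, second and fourth paragraphs match the paper's Steps 1 and 3. The gap is in your third paragraph, the identification $W^n=W^*$ on $\Sigma\setminus\Sigma^*$. First, your premise is false: the cut-off functions $a_i$ are \emph{not} strictly positive on $\Sigma\setminus\Sigma^*$; by construction they vanish identically except for $t<-\tfrac12$, $t>T+\tfrac12$, and in a $\delta$-neighborhood of part of $[-1,T+1]\times\pd G$, so on a large portion of $\Sigma\setminus\Sigma^*$ (e.g. $-\tfrac12\le t\le 0$, or $0\le t<T$ with $(\xi,\eta)\notin D^*$ but away from $\pd G$) the limiting operator is a degenerate transport--diffusion operator, not uniformly elliptic. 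Second, and more fundamentally, you cannot appeal to ``uniqueness for the elliptic Neumann problem on $\Sigma\setminus\Sigma^*$'': $\W^n_\epsilon$ solves \eqref{pr_n} on all of $\Sigma$, and on the interface $\pd(\Sigma\setminus\Sigma^*)\setminus\pd\Sigma$ (the portions $\{t=0\}$ and $\{(\xi,\eta)\in\pd D^*\}$) no boundary data is prescribed or known, uniformly in $\epsilon$ or otherwise; so there is no self-contained boundary-value problem on $\Sigma\setminus\Sigma^*$ to which a uniqueness theorem applies --- the unknown trace on that interface is exactly what must be controlled.

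The paper closes this step by a different mechanism: it passes to the limit first, sets $V=\W^n-e^{-\lambda t}W^*$, which by the very definition of $F$ and $f$ satisfies the homogeneous degenerate equation \eqref{eq-V} in $\Sigma\setminus\Sigma^*$ with homogeneous Neumann data only on $\pd(\Sigma\setminus\Sigma^*)\cap\pd\Sigma$, and then runs a maximum-principle argument that needs no information on the interface: the substitution $V_1=V(E+C)$ with $\pd E/\pd n<0$ converts the Neumann condition into a Robin condition ruling out maxima on $\pd\Sigma$; the positive zeroth-order coefficient rules out interior maxima; and maxima on $\{t=0\}\cup\{(\xi,\eta)\in\pd D^*\}$ are excluded by a first-order sign argument using the time orientation ($\Sigma\setminus\Sigma^*$ lies on the side $t\le0$) and, crucially, the structural condition \eqref{ass_k12}, $(k_1,k_2)\cdot\vec{n}\le0$ on $\pd D^*$, together with $a_2=a_3=0$ at such points --- i.e. the interface is ``outflow'' for the transport part, so no data is needed there. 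This directional information, which is the entire reason $D^*$, the extension $(k_1,k_2)$ and the shape of $\Sigma^*$ were designed as they are, is missing from your argument; without it (or an equivalent barrier/energy argument exploiting \eqref{ass_k12}), your third paragraph does not go through, and consequently the traces $W^n|_{\Gamma_-}=W_1$ and $W^n|_{t=0}=W_0$, as well as the vanishing data used in your $\{\zeta=1\}$ transport step, are not yet justified.
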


\begin{proof}[{\bf{Proof.}}]
It is sufficient to prove the existence of the new unknown function $\W^n=e^{-\lambda t}W^n$ which satisfies the corresponding properties,
and this will be done in the following several steps.

\underline{Step 1.}
From the above hypothesis that the derivatives of the solution $\W^n_\epsilon$ to \eqref{pr_n} up to
order four  are bounded uniformly in $\epsilon$,
there exists a subsequence $\{\W^n_{\epsilon_k}(t,\xi,\eta,\zeta)\}_{k>0}$
such that $\W^n_{\epsilon_k}(t,\xi,\eta,\zeta)$
 converges in $C^3(\Sigma)$ to $\W^n(t,\xi,\eta,\zeta)$ uniformly
 in $\Sigma$ as $\epsilon_k\rightarrow0$, and the third order derivatives of
 $\W^n(t,\xi,\eta,\zeta)$ are Lipschitz continuous.
From the special form of the problem \eqref{pr_n},
it is easy to see
 that $\W^n(t,\xi,\eta,\zeta)$ satisfies the equation and the boundary condition at $\{\zeta=0\}$ given
in the problem \eqref{eq_seq1}.
It remains to verify that $\W^n$ satisfies
the other boundary conditions given in \eqref{eq_seq1}, and $\W^n=e^{-\lambda t}W^*$ in $\Sigma\setminus \Sigma^*$,
\begin{equation*}
\W^n(t,\xi,\eta,1)~=~0,\quad \forall~t\in(0,T),~(\xi,\eta)\in D.
\end{equation*}

\underline{Step 2.}
In this step, we  prove that
$\W^n=e^{-\lambda t}W^*$ in $\Sigma\setminus \Sigma^*$,
from which the boundary condition on $\Gamma_-$ and the initial data on $\{t=0\}$ given in \eqref{eq_seq1} follow immediately.

Setting $V=\W^n-e^{-\lambda t}W^*$,
then from \eqref{pr_n}, $V$ satisfies
\begin{equation}\label{eq-V}
\begin{cases}-a_1\pd_t^2 V-a_2\pd_\xi^2 V-a_3\pd_\eta^2 V-\Big[a_4+(W^*)^2\Big]\pd_\zeta^2 V+V_t\\
\hspace{.3in}+
\zeta U (k_1\cdot V_\xi+k_2\cdot V_\eta )
+A V_\zeta+(B'+2a_1)V=0,\qquad {\rm in} ~\Sigma\setminus \Sigma^*,\\
{\pd V\over\pd n}=0,\quad {\rm on} ~\pd(\Sigma\setminus \Sigma^*)\cap\pd \Sigma.
\end{cases}\end{equation}

Let $E(t,\xi,\eta,\zeta)$ be a smooth function in $\Sigma$ such that $\pd E/\pd n<0$ on $\pd \Sigma$ and $E>1$. Set
\[V_1~\triangleq~V(E+C),\]
for a positive constant $C>0$.
 It is easy to check that $V_1$ satisfies an equation similar to that of $V$ given in \eqref{eq-V}, and the zero-th order coefficient of $V_1$ is positive if $C$ is sufficiently large. The boundary condition on $\pd(\Sigma\setminus \Sigma^*)\cap\pd \Sigma$
 for $V_1$ becomes
\begin{equation}\label{bd_n4}
\frac{\pd V_1}{\pd n}+\alpha_1 V_1=0,\end{equation}
with $\alpha_1=-{1\over E+C}\cdot{\pd E\over\pd n}>0$. Thus, $|V_1|$ does not achieve its non-zero maximum on the boundary $\pd(\Sigma\setminus \Sigma^*)\cap\pd \Sigma$.
Otherwise, at the point of maximum of $|V_1|$ on the boundary $\pd(\Sigma\setminus \Sigma^*)\cap\pd \Sigma$, we must have
\[V_1\frac{\pd V_1}{\pd n}+\alpha_1 (V_1)^2>0,
\]
which is a contradiction to \eqref{bd_n4}.

Similarly, the non-zero maximum of $|V_1|$ is not attained in the interior of $\Sigma\setminus \Sigma^*$ nor on the boundary
$\pd(\Sigma\setminus \Sigma^*)\cap \left(\{t=0\}\cup
 \{(\xi,\eta)\in\pd D^*\}\right)$. Indeed,
if $|V_1|$ attains the maximal at $(t,\xi,\eta,\zeta)\in \pd(\Sigma\setminus \Sigma^*)\cap
\{(\xi,\eta)\in\pd D^*\}$, then at this point
\begin{equation}\label{eq-V2-1}
V_1\pd_t V_1\geq0,~V_1\pd_\tau V_1=0,~V_1\pd_n V_1\leq0,~V_1\pd_\zeta V_2=0,\end{equation}
which implies that
\begin{equation}\label{eq-V2-2}
\begin{split}
V_1(k_1\cdot\pd_\xi V_1+k_2\cdot\pd_\eta V_1)&=V_1\cdot\Big[\big((k_1,k_2)\cdot\vec{\tau}\big)\pd_\tau V_1+\big((k_1,k_2)\cdot\vec{n}\big)\pd_n V_1\Big]\\
&=\Big((k_1,k_2)\cdot\vec{n}\Big)V_1\pd_n V_1\geq0
\end{split}\end{equation}
by using \eqref{ass_k12}.
For the second order derivatives, we have
\[V_1\pd_t^2V_1\leq0,\quad V_1\pd_\zeta^2 V_1\leq0.\]
Noting that $a_2=a_3 =0
$ at such maximal point, and $V_1$ satisfies an equation similar to \eqref{eq-V}
with the zero-th order coefficient being positive. Hence, there is  a contradiction. Therefore,
we have $V_1\equiv0$ and then $\W^n\equiv e^{-\lambda t}W^*$ in $\Sigma\setminus \Sigma^*$, which implies
that $\W^n$ satisfies the boundary conditions on $\{t=0\}$ and $\Gamma_-$ given in the problem \eqref{eq_seq1}.

\underline{Step 3.}
It remains to show that
\begin{equation}\label{topbd}
\W^n(t,\xi,\eta,1)~=~0,\quad {\rm for~all}~(t,\xi,\eta)\in(0,T)\times D.\end{equation}
From the first step, we know that $\W^n$ is a classical solution to the problem \eqref{eq_seq1}.
Restricting
the problem \eqref{eq_seq1} on the plane $\{\zeta=1\}$, it follows that $\w^n~\triangleq~\W^n(t,\xi,\eta,1)$ satisfies the following problem in $\{(t,\xi,\eta):~t\in(0,T),~(\xi,\eta)\in D\},$
\begin{equation}\label{eq_tw}
\begin{cases}
\partial_t \w^n+U(\partial_\xi +k\partial_\eta)\w^n+b\w^n=0,\\
\w^n|_{t=0}=W_0(\xi,\eta,1)=0,\quad \w^n|_{\Gamma_-}=e^{-\lambda t}W_1(t,\xi,\eta,1)=0.
\end{cases}
\end{equation}
by using $A|_{\zeta=1}=0$ and the induction assumption $W^{n-1}|_{\zeta=1}=0$. Here,
$b=\lambda+B(t,\xi,\eta,1)>0$.
It follows that $\w^n\equiv0$, which implies that \eqref{topbd} holds.

\end{proof}

\begin{remark}
The identity \eqref{bd_n3} explains why no condition
 on the boundary $\{\zeta=1\}$ of $\Omega$ in the problem \eqref{eq_seq} is needed.
\end{remark}

We now come back to give the proof of Proposition \ref{prop_n2}, which contains the following three lemmas.

\begin{lemma}\label{lem_n1}
There exists a positive constant $M_0$, independent of $\epsilon$, such that the solution $\W^n_\epsilon$
to the problem \eqref{pr_n} satisfies:
\begin{equation}\label{bd_n1}
\big|\W^n_\epsilon(t,\xi,\eta,\zeta)\big|~\leq~M_0,\,\quad {\rm in}~\Sigma.
\end{equation}
\end{lemma}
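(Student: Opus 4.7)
The plan is to deduce the uniform $L^\infty$ bound via a maximum principle for $L^\epsilon$, converting the Neumann condition into a Robin-type condition with positive coefficient by means of a multiplicative weight, exactly as in Step 2 of the proof of Proposition \ref{lem_n3}.

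First I would record the uniform lower bound on the zeroth-order coefficient of $L^\epsilon$. Since $B' \in C^5(\bar\Sigma)$ is strictly positive on the compact set $\bar\Sigma$, there exists $c_0>0$ with $B' \geq c_0$, and hence $B'_\epsilon + 2(a_1+\epsilon) \geq c_0/2$ for all sufficiently small $\epsilon$. In parallel, one checks that $\|F_\epsilon\|_{L^\infty(\Sigma)}$ and $\|f_\epsilon\|_{L^\infty(\pd\Sigma)}$ are uniformly bounded in $\epsilon$: the function $F$ was constructed with bounded derivatives up to order four in $\Sigma$, while $f$ on $S_3$ reduces to $-e^{-\lambda t}p_x/(UW^{n-1})$, which is bounded thanks to the induction hypothesis that $W^{n-1}|_{\zeta=0}$ admits a positive lower bound.

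Next I would mimic the weight trick of Step 2 in the proof of Proposition \ref{lem_n3}: pick a smooth function $E$ on $\bar\Sigma$ with $\pd E/\pd n<0$ everywhere on $\pd\Sigma$ (for instance, proportional to a defining function of $\Sigma$) and a constant $C>0$ with $E+C\geq 1$, and set $V_\epsilon := (E+C)\W^n_\epsilon$. A direct computation shows that $V_\epsilon$ satisfies an equation of the same elliptic type as \eqref{pr_n} with right-hand side $(E+C)F_\epsilon$, a modified zeroth-order coefficient that differs from $B'_\epsilon + 2(a_1+\epsilon)$ by an $O(1/C)$ remainder controlled by the $C^2$-norm of $E$ and the coefficients of $L^\epsilon$; choosing $C$ large enough keeps this modified coefficient bounded below by $c_0/4$ uniformly in $\epsilon$. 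The Neumann condition for $\W^n_\epsilon$ translates into the Robin condition
\[
\frac{\pd V_\epsilon}{\pd n} + \alpha_1 V_\epsilon = (E+C)f_\epsilon, \qquad \alpha_1 := -\frac{1}{E+C}\,\frac{\pd E}{\pd n} \geq \alpha_0 > 0.
\]

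Then the maximum principle applied to $\pm V_\epsilon$ closes the argument: at an interior positive maximum, vanishing first derivatives and nonpositive pure second derivatives combined with the positive zeroth-order coefficient force $V_\epsilon \leq 4\|(E+C)F_\epsilon\|_{L^\infty}/c_0$; at a positive maximum on $\pd\Sigma$, the necessary condition $\pd V_\epsilon/\pd n\geq 0$ together with the Robin condition gives $V_\epsilon \leq \alpha_0^{-1}\|(E+C)f_\epsilon\|_{L^\infty(\pd\Sigma)}$. The symmetric argument for $-V_\epsilon$ yields the lower bound, and dividing by $E+C\geq 1$ gives the desired estimate $\|\W^n_\epsilon\|_{L^\infty(\Sigma)} \leq M_0$ with $M_0$ independent of $\epsilon$. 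The main obstacle is verifying that after the substitution the new zeroth-order coefficient remains uniformly positive: the commutator of $L^\epsilon$ with multiplication by $E+C$ generates extra first- and zeroth-order contributions involving $\nabla E$ and $\nabla^2 E$ weighted by the (possibly degenerate) coefficients $a_i$, $(W^{n-1})_\epsilon^2$, $A_\epsilon$ and $\zeta U_\epsilon(k_{1,\epsilon},k_{2,\epsilon})$, so one must quantify $C$ in terms of the $C^2$-norm of $E$ and the $L^\infty$-norms of these coefficients to absorb the perturbation into $c_0/4$. Once $C$ is fixed in this way, the rest is a routine application of the maximum principle.
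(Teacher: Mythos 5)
Your argument is correct and follows the same route the paper intends: the paper omits the proof, stating only that it follows from the maximum principle for \eqref{pr_n} together with the properties of $W^*$ (citing Oleinik--Samokhin, Lemma 4.3.9), and your filled-in details—uniform positivity of the zeroth-order coefficient, uniform bounds on $F_\epsilon$ and $f_\epsilon$ via $W^{n-1}|_{\zeta=0}\geq h_0>0$, and the weight $E+C$ converting the Neumann condition into a Robin condition with positive coefficient—are exactly the mechanism the paper itself employs in Step 2 of the proof of Proposition \ref{lem_n3}. No gaps; the quantification of $C$ against the $C^2$-norm of $E$ and the uniformly bounded coefficients is the right point to check and you handle it correctly.
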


This lemma can be obtained by applying the maximal principle for the problem \eqref{pr_n} and using the properties of $W^*$ given in Section 2.2, a similar result was given in \cite[Lemma 4.3.9]{Ole}, so we omit the proof.

\begin{lemma}\label{lem_n2-1} For a given positive constant $r_1<\frac{1}{2}$, in the domain
$\Sigma_{out}=\Sigma\cap\{t<-\frac{1}{2}-r_1,~or ~t>T+\frac{1}{2}+r_1\}$,
the solution $\W^n_\epsilon$ to the problem \eqref{pr_n} has bounded derivatives up to
order four uniformly in $\epsilon$.
\end{lemma}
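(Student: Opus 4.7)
The plan is to exploit the fact that, on $\Sigma_{out}$, all four auxiliary weights $a_1,a_2,a_3,a_4$ are strictly bounded below by a positive constant, because this region is separated by at least $r_1$ from the transition zone $\{-1/2\le t\le T+1/2\}$ where some of the $a_i$ may vanish. Consequently the principal symbol of $L^\epsilon$,
\[
-(\epsilon+a_1)\pd_t^2-(\epsilon+a_2)\pd_\xi^2-(\epsilon+a_3)\pd_\eta^2-\bigl(\epsilon+a_4+(W^{n-1})^2_\epsilon\bigr)\pd_\zeta^2,
\]
is uniformly elliptic on $\overline{\Sigma_{out}}$, with ellipticity constant bounded below by $\min_{\overline{\Sigma_{out}}}\min_{1\le i\le 4} a_i>0$, independently of $\epsilon$.

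The next step is to record uniform-in-$\epsilon$ H\"older regularity of the coefficients and the data. The $a_i$ are fixed $C^\infty$ functions, while $U_\epsilon,A_\epsilon,B'_\epsilon,k_{1,\epsilon},k_{2,\epsilon}$ are mollifications of $C^5$ extensions, hence uniformly bounded in $C^{3,\alpha}$ for any $\alpha\in(0,1)$. The induction hypothesis on $W^{n-1}$ provides bounded $C^3$ norms together with Lipschitz continuous third derivatives, so $(W^{n-1})^2\in W^{4,\infty}$ and its mollification $(W^{n-1})^2_\epsilon$ enjoys uniform $C^{3,\alpha}$ bounds. Inspecting the piecewise definitions of $F$ and $f$ shows that, on the parts of $\overline{\Sigma_{out}}$ and $\pd\Sigma\cap\overline{\Sigma_{out}}$ where the explicit $\Sigma\setminus\Sigma^*$ formula or the smooth connection is active, these functions have bounded derivatives up to order four; their mollifications $F_\epsilon,f_\epsilon$ are therefore likewise uniformly $C^{3,\alpha}$-bounded.

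With these ingredients in hand, I would enlarge $\Sigma_{out}$ slightly to
\[
\Sigma'_{out}:=\Sigma\cap\bigl\{t<-{\tfrac12}-{\tfrac{r_1}{2}}\ \text{ or }\ t>T+{\tfrac12}+{\tfrac{r_1}{2}}\bigr\},
\]
so that $\Sigma_{out}$ sits at a positive distance from the ``artificial'' internal boundary $\{t=-1/2-r_1/2\}\cup\{t=T+1/2+r_1/2\}$ of $\Sigma'_{out}$, while its intersection with $\pd\Sigma$ still carries the Neumann condition $\pd\W^n_\epsilon/\pd n=f_\epsilon$. Applying the combined interior and Neumann-boundary Schauder estimates for uniformly elliptic operators (as in \cite{ADN,GT}) on $\Sigma'_{out}$ then yields
\[
\|\W^n_\epsilon\|_{C^{4,\alpha}(\Sigma_{out})}\le C\Bigl(\|F_\epsilon\|_{C^{2,\alpha}(\Sigma'_{out})}+\|f_\epsilon\|_{C^{3,\alpha}(\pd\Sigma\cap\Sigma'_{out})}+\|\W^n_\epsilon\|_{L^\infty(\Sigma'_{out})}\Bigr),
\]
with $C$ depending only on the uniform ellipticity constants, the H\"older norms of the coefficients, and the geometry, all under uniform control in $\epsilon$. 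Combining with the $L^\infty$ bound of Lemma \ref{lem_n1} and the embedding $C^{4,\alpha}\hookrightarrow C^4$ finishes the proof.

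The main technical obstacle is the careful bookkeeping of uniformity in $\epsilon$: because mollification of a function with only Lipschitz third derivatives does not converge in $C^4$, one cannot work on the $C^4$ scale directly, and must instead use the H\"older scale $C^{k,\alpha}$ and invoke the Schauder implication ``$C^{k,\alpha}$-data $\Rightarrow$ $C^{k+2,\alpha}$-solution'' at level $k=2$. A secondary, minor issue is the corner where the artificial time-slices $\{t=-1/2-r_1\}$, $\{t=T+1/2+r_1\}$ meet $\pd\Sigma$; enlarging to $\Sigma'_{out}$ ensures these corners lie strictly in the interior of the enlarged domain, so only the smooth portion of $\pd\Sigma$ is visited when applying the boundary Schauder estimate.
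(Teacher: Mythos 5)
Your proposal is correct and follows essentially the same route as the paper: on $\Sigma_{out}$ the weights $a_i$ are strictly positive, so $L^\epsilon$ is uniformly elliptic independently of $\epsilon$, and uniform interior/Neumann-boundary Schauder estimates (via \cite{ADN}, \cite{GT}), together with the induction hypothesis on $W^{n-1}$ and the uniform bound of Lemma \ref{lem_n1}, give the uniform $C^4$ bound. Your version merely makes explicit the standard details (H\"older-scale bookkeeping for the mollified coefficients and data, and localization via the slightly enlarged domain) that the paper leaves implicit.
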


\begin{proof}[\bf{Proof.}]
Noting that in $\Sigma_{out}$, the equation in \eqref{pr_n} is uniformly elliptic with respect to $\epsilon$, by applying  the well-known
Schauder type estimates,
cf. \cite{ADN} and \cite{GT},  in $\Sigma_{out}$, the derivatives of $\W^n_\epsilon$ up to order four are bounded uniformly  in $\epsilon$, by using the induction hypothesis that $W^{n-1}$ has bounded derivatives up to order three.
\end{proof}

To conclude Proposition \ref{prop_n2}, it suffices to show

\begin{lemma}\label{lem_n2-3}  In the domain $\Sigma_{int}=\Sigma\cap\{-{1\over2}-r_1\leq t\leq T+\frac{1}{2}+r_1\}$, the
solution $\W^n_\epsilon$ of the problem \eqref{pr_n} has bounded derivatives up to
order four  uniformly in $\epsilon$.
\end{lemma}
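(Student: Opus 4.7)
The plan is to combine the weighted-energy technique developed for the non-regularized iteration in Lemma~\ref{lem_seq2} and Theorem~\ref{thm_seq} with classical Schauder estimates in the part of $\Sigma_{int}$ where the regularized operator $L^\epsilon$ retains uniform ellipticity. By construction of the auxiliary coefficients $a_i$, at least one of $a_2,a_3,a_4$ is strictly positive in a $\delta$-collar of $\pd\Sigma\cap\{-1\le t\le T+1\}$ lying outside $\Sigma^*$, so in that collar $L^\epsilon$ is uniformly elliptic with $\epsilon$-independent ellipticity constant; Lemma~\ref{lem_n1} combined with the interior and boundary Schauder theory of \cite{ADN,GT} then yields $C^4$-bounds uniform in $\epsilon$ up to the inner edge of the collar.

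The genuinely degenerate region is therefore contained in $\Sigma^*\cap\Sigma_{int}$, which by the identification $\W^n_\epsilon\equiv e^{-\lambda t}W^*$ on $\Sigma\setminus\Sigma^*$ (established in Step~2 of the proof of Proposition~\ref{lem_n3}) can be reduced to $\Omega$. On $\Omega$ I would adapt the argument of Lemma~\ref{lem_seq2} to the regularized coefficients: setting $V^n_\epsilon=\W^n_\epsilon e^{\alpha\zeta}$ and forming the tangential energies $\Phi^\epsilon_n,\Psi^\epsilon_n$ exactly as in \eqref{phi}--\eqref{psi} but built from $V^n_\epsilon$. The additional $-\epsilon\Delta$ contribution is favorable after integration by parts (it produces a non-negative $\epsilon|\nabla V^n_\epsilon|^2$-type term), so the differential inequalities analogous to \eqref{seq_est_f1}--\eqref{seq_est_s1} remain valid with $\epsilon$-independent constants. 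At $\{\zeta=0\}$ the Neumann condition from \eqref{pr_n} produces the same favorable inequality as \eqref{bd_phi} by virtue of the inductive lower bound $W^{n-1}|_{\zeta=0}\ge h_0>0$ and the compatibility conditions in Assumption~\ref{ass_1}(iii); a maximum-principle comparison then propagates the bound inward exactly as in Theorem~\ref{thm_seq}, while on $\{\zeta=1\}$ the solution vanishes by the trivial transport problem \eqref{eq_tw}. Third- and fourth-order derivatives are handled by applying the same scheme to higher-order analogues of $\Phi^\epsilon_n,\Psi^\epsilon_n$, with normal derivatives of order $\ge 2$ recovered algebraically by solving the equation in \eqref{pr_n} for $[a_4+(W^{n-1})_\epsilon^2+\epsilon]\pd_\zeta^2\W^n_\epsilon$; this coefficient is bounded below by $c(1-\zeta)^2+\epsilon$ away from $\{\zeta=1\}$, and the auxiliary $a_4$ restores full normal ellipticity in a one-sided neighborhood of $\{\zeta=1\}$.

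The main obstacle lies in closing the boundary energies at $\{\zeta=0\}$ for the fourth-order analogue of $\Psi^\epsilon_n$. Differentiating the Neumann condition $(W^{n-1})_\epsilon\pd_\zeta\W^n_\epsilon=e^{-\lambda t}p_x/U$ three times in $\pd_\cT$ generates factors involving tangential derivatives of $W^{n-1}$ up to third order, which is precisely what the inductive smoothness hypothesis provides; the fourth-order compatibility in Assumption~\ref{ass_1}(iii) together with the convolution regularization then lets these terms be absorbed into the auxiliary constants $K_i, N_j$ for sufficiently large $\alpha$, yielding the desired $\epsilon$-uniform $C^4$ bound throughout $\Sigma_{int}$ and completing the proof of Proposition~\ref{prop_n2}.
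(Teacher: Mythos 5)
Your proposal has two structural gaps that would prevent it from working as written. First, the claimed uniform ellipticity near the lateral boundary is not there: in $\Sigma_{int}$ the coefficient $a_1$ of $\pd_t^2$ vanishes (it is positive only for $t<-\tfrac12$ or $t>T+\tfrac12$), so in the collar of $\pd\Sigma$ the operator $L^\epsilon$ controls the $t$-direction only through the factor $\epsilon$; moreover positivity of ``at least one of $a_2,a_3,a_4$'' would not give ellipticity in all spatial directions anyway. Hence Schauder estimates give constants that blow up as $\epsilon\to0$, and this is exactly why the paper devotes the separate Lemma \ref{lem_n2-2} to the boundary strip $[-\tfrac12-r_1,T+\tfrac12+r_1]\times\pd'G$: after flattening, it splits off the quantity $\Gamma(Y)=Y_t-(a_1+\epsilon)\pd_t^2Y$, treats the remaining operator $L_2$ (elliptic in $\xi,\eta,\zeta$ uniformly in $\epsilon$), and bounds derivatives through the Bernstein-type functionals $\Lambda_1,\Lambda_2$ and the equations \eqref{est_Y}--\eqref{est_Y1}. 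Your sketch offers no substitute for this step, yet the interior argument (the paper's $\Pi_1,\dots,\Pi_4$, or your $\Phi^\epsilon_n,\Psi^\epsilon_n$) needs exactly these boundary values of the functionals as input.

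Second, the reduction of the degenerate region to $\Omega$ is circular. The identity $\W^n_\epsilon\equiv e^{-\lambda t}W^*$ on $\Sigma\setminus\Sigma^*$ is false at finite $\epsilon$: the source $F$ in \eqref{pr_n} contains the correction terms $\epsilon\Delta(e^{-\lambda t}W^*)$, $[(W^*)^2-(W^{n-1})_\epsilon^2]\pd_\zeta^2(e^{-\lambda t}W^*)$, etc., precisely because $e^{-\lambda t}W^*$ does not solve the regularized equation; the identity is proved in Step 2 of Proposition \ref{lem_n3} only for the limit $\W^n$, i.e.\ after the uniform bounds of Proposition \ref{prop_n2} have already been used to extract a convergent subsequence. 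The same circularity affects your appeal to \eqref{eq_tw}: the transport problem on $\{\zeta=1\}$ is derived from the limiting equation, while at the $\epsilon$-level the problem is posed on $G$ with a Neumann condition at $\zeta=1+\delta$ and $\W^n_\epsilon$ does not vanish at $\zeta=1$. Beyond this, $\Phi_n,\Psi_n$ only reach second-order derivatives; for orders three and four the paper must handle the fact that fifth derivatives of $f_\epsilon$ are merely $\mathcal{O}(\epsilon^{-1})$, which is absorbed only because the tangential second-order terms of $\tilde L$ carry the factor $\epsilon$ --- a point your plan does not address. Finally, recovering $\pd_\zeta^2\W^n_\epsilon$ algebraically from the equation fails in the zone near $\zeta=1$ where $(W^{n-1})_\epsilon^2=\mathcal{O}((1-\zeta)^2)$ and $a_4$ need not be positive; the paper instead carries the term $g^2(\zeta)V_{\zeta\zeta}^2$ inside $\Pi_2$ (with $g\equiv1$ on $[\delta/2,1+\delta/2]$) and uses normal ellipticity only for $\zeta<\delta/2$ or $\zeta>1+\delta/2$, combined with the exponential weight $e^{\psi(\zeta)}$ and the correction $Y=(\alpha V-f_\epsilon)\varphi(\zeta)$ that convert the Neumann data at both $\zeta=0$ and $\zeta=1+\delta$ into sign conditions for $\pd_\zeta\Pi_i$.
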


Based on the above three lemmas, 
Proposition \ref{prop_n2} follows immediately,  our remaining task is to prove Lemma \ref{lem_n2-3}. For this,
let $\pd'G=\pd G\setminus\{\zeta=0,~or ~1+\delta\}$, and we first give an estimate of $\W^n_\epsilon$ on  part of
the boundary $\pd \Sigma_{int}$ in the following lemma.

\begin{lemma}\label{lem_n2-2}
The derivatives of the solution $\W^n_\epsilon$ to the problem \eqref{pr_n}, up to order four,  are
 bounded uniformly in $\epsilon$ on the boundary $[-{1\over2}-r_1,T+\frac{1}{2}+r_1]\times\pd'G.$
\end{lemma}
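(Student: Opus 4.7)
My plan is to exploit the uniform-in-$\epsilon$ ellipticity of $L^\epsilon$ in the spatial directions $(\xi,\eta,\zeta)$ in a neighborhood of the lateral boundary $\partial'G$, combined with boundary Schauder-type estimates for the Neumann problem \eqref{pr_n}. By the construction of the cutoff functions $a_i$ in the previous subsection, the coefficients $a_2, a_3, a_4$ are bounded below by a positive constant (independent of $\epsilon$) on a $\delta$-neighborhood of $[-1, T+1] \times \partial' G$. Together with the non-negativity of $(W^{n-1})^2_\epsilon$ guaranteed by the induction hypothesis, this makes the second-order part of $L^\epsilon$ uniformly elliptic in $(\xi,\eta,\zeta)$ in that neighborhood, with ellipticity constant independent of $\epsilon$.

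The data are uniformly smooth: $W^*$ is $C^6$ on $\Sigma \setminus \Sigma^*$ by its construction, the induction hypothesis gives $W^{n-1}$ continuous in $\Sigma$ with derivatives up to order three bounded and Lipschitz third derivatives, and the mollifications $U_\epsilon, A_\epsilon, B'_\epsilon, k_{j,\epsilon}, (W^{n-1})^2_\epsilon$ inherit $C^4$ bounds uniform in $\epsilon$. Consequently, both $F_\epsilon$ and the Neumann datum $f_\epsilon$ have derivatives up to order four bounded uniformly in $\epsilon$ in a neighborhood of $\partial' G$. Starting from the $L^\infty$ bound of Lemma \ref{lem_n1}, I will then bootstrap the regularity by differentiating the Neumann condition along the two tangential directions on $\partial' G$, using the equation in \eqref{pr_n} to algebraically recover one missing normal derivative at each step, and applying local boundary Schauder estimates (\cite{ADN, GT}) after flattening $\partial G$ in finitely many coordinate charts.

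The main obstacle is that the full operator $L^\epsilon$ has only $O(\epsilon)$ ellipticity in the $t$-direction wherever $a_1$ vanishes (i.e.\ for roughly $-1/2 \le t \le T + 1/2$), so a naive application of the standard elliptic Schauder estimate would produce constants that degenerate as $\epsilon \to 0$. I will circumvent this by treating the first-order term $\partial_t \widetilde{W}^n_\epsilon$ as the evolution direction of a parabolic problem: moving the term $-(a_1 + \epsilon)\partial_t^2 \widetilde{W}^n_\epsilon$ to the right-hand side and applying interior-boundary parabolic Schauder estimates (in the sense of \cite{ADN}) to the resulting uniformly parabolic operator with Neumann condition on $\partial' G$. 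The $\partial_t^2$ term can then be recovered a posteriori from the equation itself, and matched to the fully elliptic bound of Lemma \ref{lem_n2-1} on the outer time range where $a_1 > 0$, using the overlap intervals of length $r_1$ near $t = -1/2$ and $t = T + 1/2$ in which both estimates apply simultaneously. This yields a uniform bound on all derivatives of $\widetilde{W}^n_\epsilon$ up to order four on $[-1/2 - r_1, T + 1/2 + r_1] \times \partial' G$.
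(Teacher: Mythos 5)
Your setup is right as far as the spatial directions are concerned: near $[-\frac12-r_1,\,T+\frac12+r_1]\times\pd'G$ the coefficients $a_2,a_3,a_4$ are indeed bounded below uniformly in $\epsilon$, so the second-order part of $L^\epsilon$ is uniformly elliptic in $(\xi,\eta,\zeta)$, and $F_\epsilon$, $f_\epsilon$ carry uniform $C^4$ bounds there. The genuine gap is the central step in the time direction. Moving $-(a_1+\epsilon)\partial_t^2\W^n_\epsilon$ to the right-hand side and invoking uniform parabolic Schauder estimates is circular: the right-hand side then contains $(a_1+\epsilon)\partial_t^2\W^n_\epsilon$, and no bound on $\partial_t^2\W^n_\epsilon$ uniform in $\epsilon$ is available on the strip in question --- Lemma \ref{lem_n2-1} only covers $t<-\frac12-r_1$ and $t>T+\frac12+r_1$, while on the transition strips $[-\frac12-r_1,-\frac12]$ and $[T+\frac12,T+\frac12+r_1]$ the coefficient $a_1$ is of order one, so the term is not even formally small there. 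Nor can it be absorbed: a parabolic Schauder estimate controls $\partial_t$ and second spatial derivatives but gives no control of $\partial_t^2$ at all, so there is nothing on the left-hand side to absorb it into; and your ``a posteriori recovery from the equation'' only returns a bound on $(a_1+\epsilon)\partial_t^2\W^n_\epsilon$, i.e. $\partial_t^2\W^n_\epsilon=O(\epsilon^{-1})$ where $a_1=0$, which cannot be fed back into the scheme. Note also that $-(a_1+\epsilon)\partial_t^2$ enters with the favorable (elliptic) sign, which is precisely the structure that maximum-principle and energy arguments exploit; discarding it to the right-hand side throws that structure away. (A minor point: \cite{ADN} is an elliptic, not a parabolic, theory.)

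For comparison, the paper's proof avoids Schauder theory in the degenerate time regime altogether at this stage. It localizes at a point of $\pd'G$, flattens the boundary, and subtracts a function $X$ to reduce to a homogeneous Neumann condition for $Y$; first spatial derivatives are bounded by applying the maximum principle to the Bernstein-type functional $\Lambda_1$ in \eqref{lam-1}, whose term $C_2\zeta$ forces $\partial_\zeta\Lambda_1>0$ at $\zeta=0$ and so excludes a boundary maximum there. The time direction is handled through the auxiliary quantity $\Gamma(Y)=Y_t-(a_1+\epsilon)\partial_t^2Y$, which satisfies a Neumann problem of the same type \eqref{pr_gamma}; $\Gamma$ and the second spatial derivatives are bounded via $\Lambda_2$, and $Y_t$ is then recovered from the one-dimensional relation $Y_t-(a_1+\epsilon)\partial_t^2Y=\Gamma$ combined with the endpoint bounds from Lemma \ref{lem_n2-1}. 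Only for the third and fourth order derivatives are Schauder estimates invoked, and only for the operator $L_2$, which is elliptic in the spatial variables alone, with $\Gamma$, $\Gamma_t$, $\Gamma_{tt}$ estimated separately by the same kind of functionals. If you prefer an estimate-based route, you must keep the $(a_1+\epsilon)\partial_t^2$ term on the left and use its sign (for instance through energy estimates where it contributes a nonnegative $\epsilon$-weighted term after integration by parts), rather than treating it as a right-hand side perturbation of a parabolic problem.
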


We first  conclude Lemma \ref{lem_n2-3} by assuming that
the assertion of Lemma \ref{lem_n2-2} is true.

\begin{proof}[\bf{Proof of Lemma \ref{lem_n2-3}.}]
As in \cite{Ole}, by setting
\[V=\W^n_\epsilon e^{\psi(\zeta)},\quad \psi(\zeta)={\alpha\zeta(1+\delta-\zeta)\over1+\delta}\]
with a positive constant $\alpha>0$,
we get that from the problem \eqref{pr_n} of $\W^n_\epsilon$, $V$ satisfies the following boundary conditions:
\begin{equation}\label{bd_v}\begin{cases}
\pd_\zeta V-\alpha V~=~-f_\epsilon,\qquad &{\rm on}\quad\{\zeta=0\},\\
\pd_\zeta V+\alpha V~=~f_\epsilon,\qquad &{\rm on} \quad\{\zeta=1+\delta\}.
\end{cases}\end{equation}

To estimate the first order derivatives of $V$ in $\Sigma_{int}$, define
\begin{equation}\label{def_Pi}
\Pi_1~=~V_t^2+V_\xi^2+V_\eta^2+V_\zeta(V_\zeta-2Y)+k(\zeta),
\end{equation}
where
\begin{equation}\label{Y}
Y=(\alpha V-f_\epsilon)\varphi(\zeta),
\end{equation}
with a smooth function $\varphi$ satisfying
\[\varphi(\zeta)=\begin{cases}1,\quad &for ~|\zeta|\leq\delta/4,\\
-1,\quad &for~|1+\delta-\zeta|\leq\delta/4,\\
0,\quad &for ~{\delta\over2}\leq\zeta\leq1+{\delta\over2},
\end{cases}\]
and $k(\zeta)$ is a positive function to be chosen later. Then, from the boundary conditions \eqref{bd_v} and the definition \eqref{Y} of $Y$, we have
\[\pd_\zeta V~=~Y,\quad for~\quad\zeta=0,~or~1+\delta.\]
Hence, we have that on $\{\zeta=0\}$,
\[\begin{split}\pd_\zeta\Pi_1&=2V_t V_{t\zeta}+2V_\xi V_{\xi\zeta}+2V_\eta V_{\eta\zeta}-2V_\zeta Y_\zeta+k'(0)\\
&=2\alpha\Big(V_t^2+V_\xi^2+V_\eta^2\Big)-2YY_\zeta-2V_t(f_\epsilon)_t
-2V_\xi(f_\epsilon)_\xi-2V_\eta(f_\epsilon)_\eta+k'(0),
\end{split}\]
which implies that by requiring $k'(0)>0$ large enough,
\begin{equation}\label{zeta-0}
\pd_\zeta \Pi_1\Big|_{\zeta=0}>0.
\end{equation}
 Similarly, by choosing $k'(1+\delta)<0$ and its absolute value being sufficiently large, we have
\begin{equation}\label{zeta-1}
\pd_\zeta \Pi_1\Big|_{\zeta=1+\delta}<0.
\end{equation}
Therefore, the maximum of $\Pi_1$ can not be attained on the boundary $\{\zeta=0\}\cup\{\zeta=1+\delta\}$.

By direct calculation, there exist positive constants $C_1$ and $C_2$, independent of $\epsilon$, such that
\begin{equation}\label{eq_v1}
\tilde{L}(\Pi_1)+C_1\Pi_1\leq C_2
\end{equation}
with
\[\begin{split}\tilde{L}(w)~\triangleq~&L^\epsilon(w)-2\Big[(W^{n-1})_\epsilon^2+(a_4+\epsilon)\Big]
\psi_\zeta \cdot\pd_\zeta w\\
&+\Big\{A_\epsilon\psi_\zeta-\big[(W^{n-1})_\epsilon^2+(a_4+\epsilon)\big]
\cdot(\psi_{\zeta\zeta}+\psi_\zeta^2)\Big\}\cdot w.
\end{split}\]

Next, in $\Sigma_{int}$, by setting
\[\Pi_1^*~=~\Pi_1e^{-\beta t}\]
for a constant $\beta>0$,
it is easy to deduce that $\Pi_1^*$ satisfies a differential inequality similar to that one given in \eqref{eq_v1}, in which the zero-th order coefficient of $\Pi_1^*$ is larger than one for sufficiently small $\epsilon$ when $\beta$ is suitably large and $a_1$ is chosen suitably small. Therefore, from this differential inequality we obtain that
if $\Pi_1^*$ attains its maximum in the interior of
 $\Sigma_{int}$, then $\Pi_1^*$ is bounded by a constant independent of $\epsilon$.

Next, from \eqref{zeta-0} and \eqref{zeta-1}
we know that $\Pi_1^*$ does not attain its maximum on the boundary $\{\zeta=0\}\cup\{\zeta=1+\delta\}$.
On the other hand, by using Lemmas \ref{lem_n2-1} and \ref{lem_n2-3}, we obtain that $\Pi_1^*$ is uniformly bounded in $\epsilon$ on the other boundaries of $\Sigma_{int}$.
In summary,  we conclude that $\Pi_1^*$ is uniformly bounded in $\epsilon$ in $\Sigma_{int}$, so is $\Pi_1$.
Thus, it follows that the first order derivatives of $\W^n_\epsilon$ are uniformly bounded in $\epsilon$ in $\Sigma_{int}$.

Similarly, we can estimate the second and the third order derivatives of $V$ in $\Sigma_{int}$ by considering the following functionals:
\[\begin{split}\Pi_2~=~&\sum_{|\gamma|=2}(\pd_\cT^\gamma V)^2+\sum_{|\gamma|=1}\pd_\cT^\gamma V_{\zeta}\cdot\Big(\pd_\cT^\gamma V_{\zeta}-2\pd_\cT^\gamma Y\Big)
+g^2(\zeta)\cdot V_{\zeta\zeta}^2+k(\zeta),\end{split}\]
and
\[\begin{split}\Pi_3~=~&\sum_{|\gamma|=3}(\pd_\cT^\gamma V)^2+\sum_{|\gamma|=2}\pd_\cT^\gamma V_{\zeta}\cdot\Big(\pd_\cT^\gamma V_{\zeta}-2\pd_\cT^\gamma Y\Big)+g^2(\zeta)\cdot\sum_{|\gamma|=1}(\pd_\cT^\gamma V_{\zeta\zeta})^2+k(\zeta),
\end{split}\]
where
\begin{equation}\label{def_tanop-1}
\pd_\cT^\gamma=\pd_t^{\gamma_1}\pd_\xi^{\gamma_2}\pd_\eta^{\gamma_3},\quad \gamma=(\gamma_1,\gamma_2,\gamma_3),\quad |\gamma|=\gamma_1+\gamma_2+\gamma_3,
\end{equation}
denotes the differential operator tangential to the boundaries $\{\zeta=0\}\cup\{\zeta=1\}$,
and
$g(\zeta)$ is a smooth function satisfying:
\[g(\zeta)~=~\begin{cases}0,\quad &for~0\leq\zeta\leq{\delta\over4},~or~1+{3\delta\over4}\leq\zeta\leq1+\delta,\\
1,\quad &for~{\delta\over2}\leq\zeta\leq1+{\delta\over2}.
\end{cases}\]
The boundedness estimates on $\Pi_2$ and $\Pi_3$ can be derived in a way similar
 to the above discussion
 for $\Pi_1$. For this, one can deduce differential inequalities of $\Pi_2$ and $\Pi_3$ similar to the one given in  \eqref{eq_v1} for $\Pi_1$,
 by using the fact that the coefficient of $\pd_\zeta^2 w$ in \eqref{op_seq} is negative when $\zeta<\frac{\delta}{2}$ or $\zeta>1+\frac{\delta}{2}$.
 Thus, we can show that $\Pi_2$ and $\Pi_3$ are uniformly bounded in $\epsilon$ in $\Sigma_{int}$. Then,
the boundedness of $\Pi_2$ implies that the second order derivatives of $\W^n_\epsilon$, except $\pd_\zeta^2 \W^n_\epsilon$, are uniformly bounded in $\epsilon$ in $\Sigma_{int}$,
 and $\pd_\zeta^2 \W^n_\epsilon$ are uniformly bounded in $\epsilon$ in $\Sigma_{int}\cap\{{\delta\over2}\leq\zeta\leq1+{\delta\over2}\}$.

On the other hand, by using that the coefficient of $\pd_\zeta^2 w$ in \eqref{op_seq} is negative for $\zeta<\frac{\delta}{2}$ or $\zeta>1+\frac{\delta}{2}$ again,
 it follows that $\pd_\zeta^2 \W^n_\epsilon$ is uniformly bounded for $\zeta<{\delta\over2}$ or $\zeta>1+{\delta\over2}$.
 In summary, we have deduced that all second order derivatives of $\W^n_\epsilon$ are uniformly bounded in $\epsilon$ in $\Sigma_{int}$.
 Likewise, we can obtain the uniform boundedness of the third order derivatives of $\W^n_\epsilon$ by using a similar argument for $\Pi_3$.

In order to study the fourth order derivatives of $V$, set
\[\begin{split}
\Pi_4~=~&\sum_{|\gamma|=4}(\pd_\cT^\gamma V)^2+\sum_{|\gamma|=3}\pd_\cT^\gamma V_{\zeta}\cdot\Big(\pd_\cT^\gamma V_{\zeta}-2\pd_\cT^\gamma Y\Big)\\
&
+g^2(\zeta)\cdot\sum_{|\gamma|+i=4,~i\geq2}(\pd_\cT^\gamma\pd_\zeta^i V)^2
+k(\zeta).
\end{split}\]
To derive a differential inequality for $\Pi_4$  similar to \eqref{eq_v1}, we need to estimate
the terms $\tilde{L}(\pd^\gamma Y)$ with $|\gamma|=3$, which contain the fifth order derivatives of $f_\epsilon$ from the definition of $Y$ given in \eqref{Y}.
Since $f$ has bounded derivatives up to order four in a neighborhood of $S_3$ and is infinitely differentiable on the rest of $\pd \Sigma$,
the derivatives of $f_\epsilon$ up to order four
are bounded uniformly in $\epsilon$ in $\Sigma$, but the fifth order derivatives of $f_\epsilon$ has the order of
$\mathcal{O}(\epsilon^{-1})$ in the neighborhood of $S_3$. Note that in the neighborhood of $S_3$,
the second order derivatives in the operator $\tilde{L}$ have
 the coefficient $\epsilon$, that is,
\[\epsilon\pd_t^2,\quad\epsilon\pd_\xi^2,\quad\epsilon\pd_\eta^2.\]
Therefore, it is  uniformly bounded in $\epsilon$
 when applying the operator $\tilde{L}$ to the third order derivatives of $f_\epsilon$.
By studying $\Pi_4$ in a way similar to that given for $\Pi_1$,  we deduce that  $\Pi_4$ is bounded uniformly
 in $\epsilon$ in $\Sigma_{int}$, which implies that the fourth order derivatives of $\W^n_\epsilon$ are uniformly bounded in $\epsilon$ in $\Sigma_{int}$. Thus, we complete the proof of Lemma \ref{lem_n2-3}.
\end{proof}

We now turn to prove Lemma \ref{lem_n2-2}.

\begin{proof}[\bf{Proof of Lemma \ref{lem_n2-2}.}]
For any fixed
 point $P(\xi,\eta,\zeta)$ on the boundary $\pd'G$, denote by $P_\delta$ the intersection of the $\delta-$neighborhood of $P$ in the $(\xi,\eta,\zeta)-$space with the domain $G$. Consider the cylinder
\[H_\delta~=~[-{1\over2}-r_1,T+\frac{1}{2}+r_1]\times P_\delta.\]
We will show that there is a
small $\delta>0$ such that in the domain $H_{\delta}$, the derivatives of the solution $\W^n_\epsilon$ to the problem \eqref{pr_n} up to
order four  are bounded uniformly in $\epsilon$.

To simplify the presentation, we may
assume that in $H_\delta$ the coefficient $a_1$ depends only on $t$, and $a_i,i=2,3,4$ depend only on $\xi,\eta$ and $\zeta$,
and by introducing new coordinates $\xi',\eta'$ and $\zeta'$ in the domain $P_\delta$
if necessary,
so that the boundary:
 $$\pd'P_\delta=\pd P_\delta \cap \pd G,$$
is
a subset on the plane $\{\zeta'=0\}$, and
the inward normal direction
to $\pd'P_\delta$ coincides with that of the $\zeta'-$axis.
For simplicity, we still denote
the new coordinates by $\xi,\eta$ and $\zeta$. And
then, the boundary condition of problem \eqref{pr_n} on $[-{1\over2}-r_1,T+\frac{1}{2}+r_1]\times\pd'P_\delta$ becomes
\[{\pd \W^n_\epsilon\over\pd \zeta}\Big|_{\zeta=0}=-f^*_\epsilon.\]
For notation, we add a superscript $*$ to a function represented in the new coordinates $\xi',\eta'$ and $\zeta'$.

Note that on  the right hand side of \eqref{pr_n}, $F$ is infinitely differentiable in the region $H_\delta$, and $f$ is infinitely differentiable on the boundary $\pd H_\delta\cap\pd \Sigma$. Hence, we can choose a smooth function $X(t,\xi,\eta,\zeta)$ defined in $H_\delta$ satisfying
\[{\pd X\over\pd\zeta}\Big|_{\zeta=0}~=~f^*_\epsilon,\]
 and then,  from \eqref{pr_n} we know that the function
\[Y(t,\xi,\eta,\zeta)=\W^n_\epsilon(t,\xi,\eta,\zeta)+
X(t,\xi,\eta,\zeta),\]
satisfies the following problem in $H_\delta$:
\begin{equation}\label{eq_v}
\begin{cases}
L_1(Y)= \widetilde F^*_\epsilon,\\
\pd_\zeta Y|_{\zeta=0}~=~0,
\end{cases}
\end{equation}
where the operator
\[\begin{split}
L_1(Y)\triangleq&-a_{11}\pd_\xi^2 Y-a_{22}\pd_\eta^2 Y-a_{33}\pd_\zeta^2 Y
-2a_{12}\pd_{\xi\eta}^2 Y-2a_{13}\pd_{\xi\zeta}^2 Y-2a_{23}\pd_{\eta\zeta}^2 Y\\
&-(a_1+\epsilon)\pd_t^2 Y+\pd_t Y+b_1\pd_\xi Y+b_2\pd_\eta Y+b_3\pd_\zeta Y+[(B_\epsilon')^*+2(a_1+\epsilon)]Y,
\end{split}\]
 with the coefficients $(a_{11}, a_{22}, a_{33}, a_{12}, a_{13}, a_{23})$ being derived from $(a_2, a_3, a_4)$  through the transformation from $(\xi, \eta, \zeta)$ to $(\xi', \eta', \zeta')$, the function $\widetilde F_\epsilon^*$ has bounded derivatives  up to order four uniformly in $\epsilon$.
By using the assumption of $a_i,i=2,3,4$, there exists a positive constant $\lambda_1$, independent of $\epsilon$, such that for any $\alpha=(\alpha_1,\alpha_2,\alpha_3)\in\R^3$, we have
\[a_{11}\alpha_1^2+a_{22}\alpha_2^2+a_{33}\alpha_3^2+2a_{12}\alpha_1\alpha_2
+2a_{13}\alpha_1\alpha_3+2a_{23}\alpha_2\alpha_3\geq\lambda_1|\alpha|^2,\]
which implies that the operator $L_1$ is uniformly elliptic in $H_\delta$. Moreover, the coefficient of the zero-th order of $Y$ in $L_1(Y)$ is positive in $H_\delta$. The next main task is to study the boundedness of derivatives of the solution $Y$ to the problem \eqref{eq_v}. This is given in the following several steps by developing the idea from \cite{Ole}.\\

\underline{Step 1.}
Estimates of the first order spatial derivatives of $Y$.

Set
\begin{equation}\label{lam-1}
\Lambda_1=\rho_\delta^2(\xi,\eta,\zeta)\Big[Y_\xi^2+Y_\eta^2+Y_\zeta^2\Big]
+C_1Y^2+C_2\zeta,
\end{equation}
where $C_1$ is a positive constant to be determined later such that the  inequality
\eqref{lambda-1} given below holds, $C_2>0$ is a constant, $\rho_\delta(\xi,\eta,\zeta)$ is a smooth cut-off function, defined in $P_\delta$, satisfying
$\rho_\delta\equiv1$ in $P_{\delta/2}$, $\rho_\delta\equiv0$ in a small neighborhood of the boundary $\pd P_\delta\setminus\pd G$, and
\[{\pd \rho_\delta\over\pd \zeta}\Big|_{\zeta=0}~=~0.\]

From Lemma \ref{lem_n2-1}
we know that $\Lambda_1$ is uniformly bounded in $\epsilon$ on the boundary
 $\{t=-{1\over2}-r_1\}$ or $\{t=T+\frac{1}{2}+r_1\}$.
Next, it is easy to see that
\[{\pd \Lambda_1\over\pd \zeta}\Big|_{\zeta=0}=C_2>0,\]
which implies that $\Lambda_1$ does not
attain its maximum on the boundary $[-{1\over2}-r_1,T+\frac{1}{2}+r_1]\times\pd'P_\delta$. If the maximum of $\Lambda_1$ is attained at a point on the boundary $[-{1\over2}-r_1,T+\frac{1}{2}+r_1]\times\Big(\pd P_\delta\setminus\pd'P_\delta\Big)$, then $\rho_\delta=0$ at such point,
and
\[\Lambda_1\leq \max\{C_1V^2+C_2\zeta\}\leq C_3,\]
 by using Lemma \ref{lem_n1},
with $C_3$ being a positive constant independent of $\epsilon$.

It is easy to check that for large $C_1$, we have
\begin{equation}\label{lambda-1}
L_1(\Lambda_1)+\Lambda_1\leq C_4,\quad in~H_\delta,
\end{equation}
for a positive constant $C_4$ independent of $\epsilon$.
 Thus, if $\Lambda_1$ attains its maximum inside $H_\delta$, then
from \eqref{lambda-1}
we have
$\Lambda_1\leq C_4.$

In conclusion, we deduce
that $\Lambda_1$ is bounded uniformly in $\epsilon$ in $H_\delta$, which implies that $Y_\xi,Y_\eta$ and $Y_\zeta$ are also bounded uniformly  $\epsilon$ in $H_{\delta_1}$ for
a small constant $\delta_1<\delta$.\\

\underline{Step 2.}
Estimates of $Y_t$ and the second order spatial derivatives of $Y$.

Set
\[\Gamma(Y)~\triangleq~ Y_t-(a_1+\epsilon)\pd_t^2 Y.\]
Then, the equation given in \eqref{eq_v} can be rewritten in the following form
\[L_1(Y)=\Gamma(Y)+L_2(Y)=\widetilde F^*_\epsilon.\]
Without loss of generality, one
may assume that the coefficients of the operator $L_2$ are independent of $t$.
From \eqref{eq_v}  we know that  $\Gamma\triangleq\Gamma(Y)$ satisfies the following problem:
\begin{equation}\label{pr_gamma}
\begin{split}
&L_1(\Gamma)=\Gamma(\Gamma)+L_2(\Gamma)=\Gamma(\widetilde F^*_\epsilon),\quad in ~H_{\delta_1},\\
&\pd_\zeta \Gamma\Big|_{\zeta=0}=0.
\end{split}\end{equation}

To study the boundedness of the second order derivatives of $Y$ with respect to the variables $\xi,\eta$ and $\zeta$, similar to $\Lambda_1$,
we consider the following functional in $H_{\delta_1}$:
\begin{equation}\label{lam-2}
\Lambda_2~=~\rho_{\delta_1}[\pd_\xi^2 Y+\pd_\eta^2 Y+\pd_{\xi\eta}^2 Y+\pd_{\xi\zeta}^2 Y+\pd_{\eta\zeta}^2 Y+\Gamma^2]+C_5(Y_\xi^2+Y_\eta^2+Y_\zeta^2)+C_6\zeta,
\end{equation}
where
$C_5$ and $C_6$ are two positive constants,
and $\rho_{\delta_1}$
is a smooth cut-off
function similar to
$\rho_\delta$ given in \eqref{lam-1}.

By a computation similar to the one for $\Lambda_1$,
 we can obtain that $\Lambda_2$ is uniformly bounded in $\epsilon$ in $H_{\delta_1}$
by properly choosing $C_5$ and $C_6$,
which implies that
$\pd_\xi^2 Y,\pd_\eta^2 Y,\pd_{\xi\eta}^2 Y,$ $\pd_{\xi\zeta}^2 Y,\pd_{\eta\zeta}^2 Y$ and $\Gamma$ are uniformly bounded in $\epsilon$ in $H_{\delta_2}$ for a positive constant $\delta_2<\delta_1$.
Then, from the equation given in \eqref{eq_v} we deduce
 that $\pd_\zeta^2 Y$ is also uniformly bounded in $\epsilon$ in $H_{\delta_2}$.

Next, we consider the equation of $Y_t$:
 $$Y_t-(a_1+\epsilon)\pd_t^2 Y=\Gamma.$$
By combining with the uniform boundedness of $\Gamma$ in $H_{\delta_2}$
and of $Y_t$ at $t=-{1\over2}-r_1,~or~t=T+\frac{1}{2}+r_1$ from Lemma \ref{lem_n2-1},
it is easy to deduce that $Y_t$ is also uniformly bounded in $\epsilon$ in $H_{\delta_2}$.\\

\underline{Step 3.}
Estimates of higher order derivatives of $Y$.

Noting that $Y$ satisfies the equation:
\begin{equation}\label{est_Y}
L_2(Y)=-\Gamma+\widetilde F^*_\epsilon,\end{equation}
and $L_2$ is elliptic in $\xi,\eta,\zeta$ uniformly in $\epsilon$. In order to use the Schauder estimates of elliptic equations to study the third and fourth order derivatives of $Y$, one needs to estimate the derivatives of $\Gamma$ in $\xi,\eta$ and $\zeta$ up to order three. Similarly, from
\begin{equation}\label{est_Y1}
L_2(\Gamma)=-\Gamma(\Gamma)+\Gamma(\widetilde F^*_\epsilon),\end{equation}
we need to estimate the derivatives of $\Gamma(\Gamma)$ in $\xi,\eta$ and $\zeta$ up to  order two.

Since  $\Gamma$ is uniformly bounded in $\epsilon$ in $H_{\delta_2}$ and satisfies the problem \eqref{pr_gamma}, as in \cite{OA1}, by studying
some functionals of $\Gamma$ similar to $\Lambda_1$ and $\Lambda_2$ of $Y$ in the region $H_{\delta_2}$,  we can obtain the boundedness of $\mathcal{F}(\Gamma)$ in $H_{\delta_3}$ uniformly in $\epsilon$ for a positive constant $\delta_3<\delta_2$, with
\begin{equation}\label{def_F}
\mathcal{F}(Y)=\Big(Y_\xi,~Y_\eta,~Y_\zeta,~Y_t,~\pd_\xi^2Y,~\pd_\eta^2Y,
~\pd_{\xi\eta}^2Y,~\pd_{\xi\zeta}^2Y,~\pd_{\eta\zeta}^2Y,
~\pd_\zeta^2Y,~\Gamma(Y)\Big).
\end{equation}

Similarly, for suitable $a_1$, similar arguments
holds  for  $\Gamma_t$ and $\Gamma_{tt}$ so that we
can obtain the uniform boundeness of $\mathcal{F}(\Gamma_t)$ and $\mathcal{F}(\Gamma_{tt})$ in $H_{\delta_4}$ for some positive constant $\delta_4<\delta_3$.

From these uniform estimates,
we deduce that in $H_{\delta_4}$, both of the third and fourth order derivatives of $Y$ containing more than one order
 differentiation in $t$ and the derivatives of $\Gamma(\Gamma)$ with respect to $\xi,\eta$ and $\zeta$ up to
order two are
bounded  uniformly in $\epsilon$. Therefore, from \eqref{est_Y} and \eqref{est_Y1} we know that the derivatives of  $Y$ up to order four  are uniformly
bounded  in $\epsilon$ in $H_{\delta_4}$.
This completes  the proof of the lemma.
\end{proof}

\vspace{.15in}

{\bf Acknowledgements :}
The first two authors' research was supported in part by
National Natural Science Foundation of China (NNSFC) under Grants
No. 10971134, No. 11031001 and No. 91230102. The last author's research was supported by the General Research Fund of Hong Kong,
CityU No. 103713.
We would like to thank Dr. Fang Yu for her valuable discussion on this problem.



\begin{thebibliography}{99}
\bibitem{ADN} S. Agmon, A. Douglis \& L. Nirenberg,
Estimates near the boundary for solutions of elliptic partial differential equations satisfying general boundary conditions, {\it Comm. Prue Appl. Math.}, 12(1959), 623-727.

\bibitem{AWXY} R. Alexandre, Y.-G. Wang, C.-J. Xu \& T. Yang,
Well-posedness of the Prandtl equation in Sobolev spaces, to appear in {\it J. Amer. Math. Soc.}.

\bibitem{BS} J.W. Barrett \&  E. S$\ddot{u}$li, Reflections on Dubinski$\breve{i}$s nonlinear compact embedding theorem, {\it Publ. Inst. Math.}, 91(105)(2012), 95-110.

\bibitem{CS} R. E. Caflisch  \&  M. Sammartino, Existence and singularities for the Prandtl boundary layer equations, {\it Z. Angew. Math. Mech.}, 80(2000), 733-744.

\bibitem{e-1} W. E, Boundary layer theory and the zero-viscosity limit of
the Navier-Stokes equation, {\it Acta Math. Sin. (Engl.
Ser.),} 16(2000), 207-218.

\bibitem{e-2} W. E \&  B. Engquist,  Blow up of solutions of the unsteady
Prandtl's equation, {\it Comm. Pure Appl.
Math.}, 50(1997), 1287-1293.

\bibitem{Fife}P. C. Fife, Semilinear elliptic boundary value problems with small parameters, {\it Arch. Ration. Mech. Anal.}, 52(1973), 205-232.



\bibitem{GV-D} D. G\'erard-Varet \& E. Dormy, On the ill-posedness of the Prandtl equation,
{\it J. Amer. Math. Soc.},  23(2010), 591-609.

\bibitem{GV-N} D. G\'erard-Varet \& T. Nguyen, Remarks on the ill-posedness of the Prandtl equation, {\it Asymptot. Anal.}, 77(2012), 71-88.

\bibitem{GT} D. Gilbarg \& N. Trudinger,
{\it Elliptic Partial Differential Equations of Second Order}, Springer-Verlag, Berlin, 1983.

\bibitem{grenier} E. Grenier, On the nonlinear instability of Euler and Prandtl equations,
{\it Comm. Pure Appl. Math.} 53(2000), 1067-1091.

\bibitem{guo} Y. Guo \& T. Nguyen,  A note on the Prandtl boundary layers, {\it Comm. Pure Appl. Math.}
 64 (2011) 1416-1438, doi: 10.1002/cpa.20377

\bibitem{cannone} M. C. Lombardo, M. Cannone \& M. Sammartino, Well-posedness of the boundary layer
equations, {\it SIAM J. Math. Anal.}, 35(2003), 987-1004 (electronic).

\bibitem{Masmoudi-Wong} N. Masmoudi \& T. K. Wong, Local-in-time existence and uniqueness of solutions to the Prandtl equations by energy methods, arXiv: 1206.3629v1, 2012, to appear in {\it Comm. Pure Appl. Math.}.

\bibitem{me} Y. Maekawa, On the inviscid limit problem of the vorticity equations for viscous incompressible flows in the half plane, prerpint 2012.

\bibitem{moore} F. K. Moore, Three-dimensional boundary layer theory. {\it Adv. Appl. Mech.}, 4(1956), 159-228.

\bibitem{OA1} O. A. Oleinik, On the properties of solutions of some elliptic boundary value problems,
{\it Matem. Sb.}, 30(1952), 692-702.

\bibitem{Ole} O. A. Oleinik \& V. N. Samokhin, {\it Mathematical Models in Boundary Layer Theory}, Chapman $\&$ Hall/CRC, 1999.

\bibitem{prandtl} L. Prandtl,  \"{U}ber fl\"{u}ssigkeitsbewegungen bei sehr kleiner Reibung, in {\it Verh. Int. Math. Kongr., Heidelberg, Germany 1904}, Teubner, Germany 1905, 484-494.

\bibitem{Samm} M. Sammartino \& R. E. Caflisch, Zero viscosity limit
for analytic solutions of the Navier-Stokes equations on a
half-space, I. Existence for Euler and Prandtl equations, {\it Comm.
Math. Phys.}, 192(1998), 433-461; II. Construction of the
Navier-Stokes solution, {\it Comm. Math. Phys.}, 192(1998), 463-491.

\bibitem{taylor}M. E. Taylor, Commutator estimates, {\it Proc. Amer. Math. Soc.}, 131(2002), 1501-1507.

\bibitem{van} L. L. Van Dommelen \&  S. F. Shen, The spontaneous generation of the singularity in a separating laminar boundary layer, {\it J. Comput. Phys. }, 38(1980), 125-140.

\bibitem{xin} Z. P. Xin, Viscous boundary layers and their stability (I), {\it J. Partial Differential Equations}, 11(1998), 97-124.

\bibitem{XZ}Z. P. Xin \&  L. Zhang, On the global existence of solutions to the Prandtl's system, {\it Adv. in Math.}, 181 (2004), 88-133.

\end{thebibliography}
\end{document}